\author{Pierre Tarrago}
\title{Sawtooth models and asymptotic independence in large compositions}
\newtheorem{lem}{Lemme}
\newtheorem{prop}{Proposition}
\newtheorem{thm}{Theorem}
\newtheorem{defn}{Definition}
\newtheorem{cor}{Corollary}
\newtheorem{rmq}{Remark}
\newtheorem{nota}{Notation}
\newtheorem*{lem*}{Lemme}
\newtheorem*{prop*}{Proposition}
\newtheorem*{thm*}{Theorem}
\newtheorem*{defn*}{Définition}
\newtheorem*{cor*}{Corollary}
\begin{document}
\maketitle
\begin{abstract}
In this paper we improve the probabilistic approach of Ehrenborg, Levin and Readdy in \cite{ehrenborg2002probabilistic} by introducing a simpler but more general probabilistic model. As consequence we get some new estimates on the behavior of a uniform random permutation $\sigma$ having a fixed descent set. In particular we find a positive answer to the Conjecture 4 of \cite{bender2004asymptotics} and we show that independently of the shape of the descent set, $\sigma(i)$ and $\sigma(j)$ are almost independent when $i-j$ becomes large. 
\end{abstract}
\section{Introduction}
A descent of a permutation $\sigma$ of $n\in\mathbb{N}^{*}$ is an integer $i$ such that $\sigma(i)>\sigma(i+1)$. For each permutation $\sigma$, the corresponding descent set $D(\sigma)$ is the set of all the descents of $\sigma$. Since descents can be located everywhere except on $n$, a descent set is just a subset of $\lbrace 1,\dots,n-1\rbrace$, and for the moment we call a composition of $n$ the data of $n$ and a subset of $\lbrace 1,\dots,n-1\rbrace$.  We can pictiorally reformulate this by drawing a composition $D$ as a skew Young diagram $\lambda_{D}$ of $n$ cells $1,\dots,n$ with the following rule : cells $i$ and $i+1$ are neighbors and the cell $i+1$ is right to $i$ if $i\not\in D$, below $i$ otherwise. Therefore the descent set of a permutation $\sigma$ is $D$ if and only if inserting $\sigma(i)$ in each cell $i$ of $\lambda_{D}$ results in a standard skew-Young tableau. For example the composition $D=\lbrace 10, (3,5,9)\rbrace$ matches the following skew Young diagram:
\begin{figure}[!h]
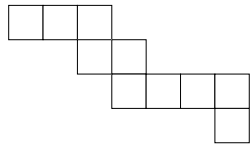


\gyoung(;;;,::;;,:::;;;;,::::::;)
\caption{\label{fig1}Skew Young diagram $\lambda_{D}$ associated to the composition $D=\lbrace 10,(3,5,9)\rbrace$}
\end{figure}\\
And the permutation $\sigma=(3,5,8,4,7,1,6,9,10,2)$ has the descent set $D$ since the associated filling of $\lambda_{D}$ results in a skew Young tableau as shown in figure \ref{fig2}.
\begin{figure}[!h]
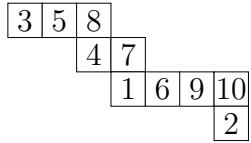

\gyoung(;3;5;8,::;4;7,:::;1;6;9;<10>,::::::;2)
\caption{\label{fig2}Standard filling of the composition $(3,2,4,1)$}
\end{figure}\\

Conversely for each composition $D$ of $n$, the problem is to count how many permutations of $[1,n]$ have exactly $D$ as descent set; it is equivalent to count the number of standard fillings of the associated skew Young tableau $\lambda_{D}$. This latter number, $\beta(D)$, is called the descent statistic of $D$ and has been intensively studied in the last decades (see Viennot \cite{viennot1979permutations} and \cite{viennot1981equidistribution} , Niven \cite{niven1968combinatorial}, de Bruijn \cite{de1970permutations} , ...): the main questions were on one hand to find the compositions of $n$ having a maximum descent statistic, and on the other hand to find exact or asymptotic formulae for descent statistic of compositions of given shape and large size. For example, Niven and de Bruijn proved in \cite{niven1968combinatorial} and \cite{de1970permutations} that the two compositions of $n$ maximizing the descent statistic are $D_{1}(n)=\lbrace 1,3,5,\dots\rbrace\cap[1,n]$ and $D_{2}(n)=\lbrace 2,4,6,\dots,\rbrace\cap[1,n]$, whose associated permutations are called alternating permutations; Désiré André gave long before them in \cite{andre1881permutations} an asymptotic formula for the number of alternating permutations, showing that $\beta(D_{1})(n)\sim 2(2/\pi)^{n}n!$ as $n$ goes to infinity.\\
To be able to evaluate the descent statistic of a broad class of compositions, Ehrenborg, Levin and Readdy formalized in \cite{ehrenborg2002probabilistic} a probabilistic approach to the counting problem, by relating each permutation of $[1,n]$ with a particular simplex of $[0,1]^{n}$. Since the cube $[0,1]^{n}$ with the Lebesgue measure can be seen as a probability space, it is possible to use probabilistic tools to get interesting results on descent statistics : Ehrenborg obtained in \cite{ehrenborg2002asymptotics} asymptotic descent statistics for the so-called nearly periodic permutations, which consists essentially in permutations having the same descent pattern repeated several times and with some local perturbations. Once again the asympotic formula has the shape $K\lambda^{n}n!$, with $K$ and $\lambda$ some constants depending on the situation. Using this approach together with functional analysis tools, Bender, Helton and Richmond extended in \cite{bender2004asymptotics} the latter result to a broader class of descent sets, and found asymptotic formulae of the same shape as before.  The factorial term of the asymptotic formula is easy to get, since it comes from the cardinality of the set $\mathfrak{S}_{n}$ of permutations of $n$ elements. However the power term is harder to understand. The main point of the article \cite{bender2004asymptotics} is that the authors identified in this class of descent sets the phenomenon that makes the power term $\lambda^{n}$ appear: namely if we consider a large uniformly random permutation with a fixed descent set, the value of $\sigma(1)$ and $\sigma(n)$ are nearly independent, which causes a factorization in the asymptotic counting. The natural question is thus to know which compositions induce this phenomenon, and it was conjectured in \cite{bender2004asymptotics} that every composition have this property as they become large.\\
In the present article we construct a family of particular statistic models, called sawtooth models, that greatly simplifies the probabilistic approach of Ehrenborg, Readdy and Levin. These models are more general than the ones we need in the combinatoric of descent sets, but the properties we will use thereafter appear more clearly in this broader case; thus we first study these models in their full generality, before deducing some specific results on descent sets. As a main consequence we derive an affirmative answer to the Conjecture 4 on asymptotic independence from Bender, Helton and Richmond (\cite{bender2004asymptotics}) and we are able to conclude by the following intuitive result on compositions :\\
\textit{In the random filling of a composition, the content of two distant cells are almost independent.}\\
In a forthcoming paper we will use the results of this article to study an analog of the Young lattice that was introduced by Gnedin and Olshanski in \cite{gnedin2006coherent}.
\section{Preliminaries and results}
\subsection{Compositions}
This paragraph gives definitions and notations concerning compositions.
\begin{defn}\label{composition}
Let $n\in\mathbb{N}$. A composition $\lambda$ of $n$ is a sequence of positive integers $(\lambda_{1},\dots,\lambda_{r})$ such that $\sum \lambda_{j}=n$.
\end{defn}
A unique ribbon Young diagram with $n$ cells is associated to each composition: each row $j$ has $\lambda_{j}$ cells, and the first cell of the row $j+1$ is just below the last cell of the row $j$. For example the composition of $10$, $(3,2,4,1)$ is represented as in figure \ref{fig1}.
This picture shows directly the link between Definition \ref{composition} and the definition we stated in the introduction :  a composition $\lambda=(\lambda_{1},\dots,\lambda_{r})$ of $n$ yields a subset $D_{\lambda}$ of  $\lbrace 1,\dots,n-1\rbrace$, namely the subset $\lbrace \lambda_{1},\lambda_{1}+\lambda_{2},\dots,\lambda_{1}+\dots+\lambda_{r-1}\rbrace$. The latter correspondence is clearly bijective.\\
The size $\vert \lambda\vert$ of a composition is the sum of the $\lambda_{j}$. When nothing is specified, $\lambda$ will always be assumed to have the size $n$, and $n$ will always denote the size of the composition $\lambda$. \\
A standard filling of a composition $\lambda$ of size $n$ is a standard filling of the associated ribbon Young diagram: this is an assignement of a number between $1$ and $n$ for each cell of the composition, such that every cells have different entries, and the entries are increasing to the right along the rows and decreasing to the bottom along the columns. An example for the composition of figure \ref{fig1} is shown in figure \ref{fig2}.\\
In particular, reading the tableau from left to right and from top to bottom gives for each standard filling a permutation $\sigma$; moreover the descent set of such a  $\sigma$, namely the set of indices $i$ such that $\sigma(i+1)<\sigma(i)$, is exactly the set 
$$D_{\lambda}=\lbrace \lambda_{1},\lambda_{1}+\lambda_{2},\dots,\sum_{1}^{r-1}\lambda_{i}\rbrace.$$
There is a bijection between the standard fillings of $\lambda$ and the permutations of $\vert \lambda\vert$ with descent set $D_{\lambda}$. For example the filling in figure \ref{fig2} yields the permutation $(3,5,8,4,7,1,6,9,10,2)$.\\
\subsection{Result on asymptotic independence}
We present here the main results that are proven in the present paper.
\begin{nota}
Let $\lambda$ be a composition. Let $\Sigma_{\lambda}$ denote the set of all permutations with descent set $D_{\lambda}$. With the uniform counting measure $\mathbb{P}_{\lambda}$ it becomes a probability space, and $\sigma_{\lambda}$ denotes the random permutation coming from this probability space.
As usual $\vert\Sigma_{\lambda}\vert$ is the cardinal of the set $\Sigma_{\lambda}$. 
\end{nota}
$\vert\Sigma_{\lambda}\vert$ is thus the descent statistic associated to the composition $\lambda$.\\
Denote for each random variable $X$ by $\mu(X)$ its law and by $d_{X}$ its density, and write $\mu\otimes \nu$ the independent product of two laws. The goal of the paper is to prove that distant cells in a composition have independent entries, namely:
\begin{thm}\label{goal2}
Let $\epsilon,r\in\mathbb{N}$. Then there exists $n\geq 0$ such that if $\lambda$ is a composition of $N$ and $0<i_{1}<\dots <i_{r}\leq N$ are indices with $i_{j+1}-i_{j}\geq n$,
$$d_{\pi}\left(\mu(\frac{\sigma_{\lambda}(i_{1})}{N},\dots,\frac{\sigma_{\lambda}(i_{r})}{N}),\mu(\frac{\sigma(i_{1})}{N})\otimes\dots\otimes \mu(\frac{\sigma(i_{r})}{N})\right)\leq \epsilon,$$
with $d_{\pi}$ denoting the Levy-Prokhorov metric on the set of measures of $[0,1]^{r}$.
\end{thm}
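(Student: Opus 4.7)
The plan is to reduce the discrete combinatorial statement to an asymptotic independence claim for the continuous sawtooth model, and then exploit the natural Markov decomposition of that model to obtain mixing. Concretely, the Ehrenborg--Levin--Readdy construction (generalized here to sawtooth models) identifies the uniform law on $\Sigma_{\lambda}$ with the push-forward under the ranking map of the uniform probability measure on the simplex
\[
\Delta_{\lambda}=\lbrace x\in[0,1]^{N} : x_{i}<x_{i+1}\text{ if }i\notin D_{\lambda},\ x_{i}>x_{i+1}\text{ if }i\in D_{\lambda}\rbrace.
\]
If $(x_{1},\dots,x_{N})$ is distributed according to this measure, then $\sigma_{\lambda}(i)/N$ is the normalized rank of $x_{i}$, and a Glivenko--Cantelli/DKW-type estimate uniform in $\lambda$ guarantees $|\sigma_{\lambda}(i)/N-x_{i}|\to 0$ in probability at a rate that depends only on $N$. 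It therefore suffices to establish the theorem with each $\sigma_{\lambda}(i_{k})/N$ replaced by the continuous coordinate $x_{i_{k}}$.

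The key structural observation is that the uniform law on $\Delta_{\lambda}$ is Markov in the index variable: conditionally on $x_{i}=t$, the blocks $(x_{1},\dots,x_{i})$ and $(x_{i},\dots,x_{N})$ are independent, each being uniformly distributed on a smaller sawtooth simplex with boundary fixed at $t$. Iterating this spatial Markov property at the indices $i_{1}<\dots<i_{r}$ yields an exact factorization of the joint density of $(x_{i_{1}},\dots,x_{i_{r}})$ as a telescoping product of transfer kernels $K_{\lambda}^{(k)}(s,t)$ describing the law of $x_{i_{k+1}}$ given $x_{i_{k}}$, where each kernel depends only on the restriction of $\lambda$ between $i_{k}$ and $i_{k+1}$. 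The theorem then reduces to showing that, as the gap $i_{k+1}-i_{k}$ grows, $K_{\lambda}^{(k)}(s,\cdot)$ converges in total variation to the marginal $\mu_{k}(\cdot)$ at a rate uniform in $\lambda$ and $s$.

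The main obstacle is precisely this uniform mixing, since the kernel $K_{\lambda}^{(k)}$ varies wildly with the descent pattern between the two selected indices. The natural route is a Doeblin-type minorization: one should prove that for some fixed $n_{0}$ and $\alpha>0$, any sub-composition of length $n_{0}$ produces a kernel satisfying $K_{\lambda}^{(n_{0})}(s,\cdot)\geq \alpha\,\nu(\cdot)$ for a reference density $\nu$ independent of $s$ and of the sub-composition. A standard coupling argument then yields geometric decay of the form $(1-\alpha)^{\lfloor k/n_{0}\rfloor}$ in total variation. Proving the minorization uniformly over the unbounded combinatorial variety of sub-compositions is where the general sawtooth formalism developed earlier in the paper is expected to do the heavy lifting, probably by comparing the explicit multivariate sawtooth densities to a fixed smoothing kernel after a bounded number of steps.

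Inserting the uniform mixing estimate into the telescoping factorization bounds the total variation between the joint law of $(x_{i_{1}},\dots,x_{i_{r}})$ and the product of its marginals by something of order $r\,(1-\alpha)^{n/n_{0}}$. Choosing $n$ large enough so that this is below $\epsilon/2$, and combining with the uniform rank-to-value comparison of the first step, delivers a Lévy--Prokhorov distance below $\epsilon$ on the original discrete laws, which is the desired conclusion.
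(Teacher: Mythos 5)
Your reduction from the discrete permutation statement to a statement about the continuous coordinates $x_{i_{k}}$ of the simplex $\Delta_{\lambda}$ is the same move the paper makes (this is Lemma~\ref{NNRPdes}), and the appeal to the spatial Markov structure of the sawtooth density is also the right instinct. After that, however, your argument diverges from the paper's and, as written, has two genuine gaps.

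First, a structural inaccuracy: you assert that the Markov factorization produces transfer kernels $K_{\lambda}^{(k)}(s,t)$ each of which ``depends only on the restriction of $\lambda$ between $i_{k}$ and $i_{k+1}$.'' This is not so. The spatial Markov property (Lemma~\ref{markov}) tells you that conditionally on $x_{i_{k}}$ the past and future blocks are independent, but the conditional law of $x_{i_{k+1}}$ given $x_{i_{k}}=s$ is a marginal of the \emph{full} process and therefore still carries the influence of the composition to the right of $i_{k+1}$ (which has not been integrated out). The paper sidesteps this by writing the joint density of $(x_{i_{1}},x_{i_{2}})$ as a product of three densities---$d_{X_{F},\nu_{1}}(x)\,d_{(X_{I},X_{F}),\nu_{2}}(x,y)\,d_{X_{I},\nu_{3}}(y)$ over a normalizer---where $\nu_{1},\nu_{2},\nu_{3}$ are the three truncated compositions, rather than a chain of one-step kernels.

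Second, and more seriously, the proposed Doeblin minorization does not hold in the generality you need. Take the sub-composition between $i_{k}$ and $i_{k}+n_{0}$ to be a single increasing run: then $x_{i_{k}}<x_{i_{k}+1}<\dots<x_{i_{k}+n_{0}}$ forces the conditional density of $x_{i_{k}+n_{0}}$ given $x_{i_{k}}=s$ to be supported on $[s,1]$, which shrinks to a point as $s\to 1$; a single decreasing run forces support on $[0,s]$. No reference measure $\nu$ that is fixed (independent of $s$ and of the pattern) can be minorized by both families, so the coupling constant $\alpha$ you need is not uniform. The mechanism of asymptotic decorrelation is not, in fact, geometric Doeblin mixing: when the runs adjacent to $i_{k}$ are long, the paper shows $x_{i_{k}}$ concentrates at a deterministic value $\tfrac{a}{a+b}$, and the resulting factorization is a degenerate one (convergence of the joint law to a product of Diracs), which is a concentration phenomenon rather than a minorization; when all relevant runs are bounded, the paper invokes the full strength of Theorem~\ref{indepGen} (the proof of Conjecture~4 of Bender--Helton--Richmond, which occupies Sections~4--6 and itself needs a delicate case analysis according to the size of the second run). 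A Doeblin bound with a fixed $(\alpha,\nu)$ cannot subsume both regimes, so the ``heavy lifting'' you attribute to the sawtooth formalism would in fact require a different argument than the one you sketch. Your overall architecture (discrete-to-continuous, Markov splitting, show the middle block decorrelates) is compatible with the paper's, but the mixing step needs to be replaced by the paper's case split between the concentration regime and the bounded-run regime backed by Theorem~\ref{indepGen}.
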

If the first and last runs of the composition remain bounded, the latter can be improved for the density of the first and last particle. This is the content of the Conjecture $4$ of \cite{bender2004asymptotics} that is proven in this paper and reformulated here in term of permutation :
\begin{thm}\label{goal1}
Let $\epsilon>0$, $A\geq 0$. There exists $n\geq 0$ such that for any composition $\lambda$ of size larger than $n$ with first and last run bounded by $A$,
\begin{equation}
\Vert d_{(\frac{\sigma_{\lambda}(1)}{n},\frac{\sigma_{\lambda}(n)}{n})}-d_{\frac{\sigma_{\lambda}(1)}{n}}d_{\frac{\sigma_{\lambda}(n)}{n}}\Vert_{\infty}<\epsilon.
\end{equation}
\end{thm}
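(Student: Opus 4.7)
The plan is to prove Theorem \ref{goal1} via the sawtooth-model representation established earlier in the paper. I first realize $\sigma_\lambda$ as the rank vector of a uniformly chosen point $(X_1,\ldots,X_n)$ in the simplex $\Delta_\lambda\subset[0,1]^n$ defined by the inequalities $X_i<X_{i+1}$ for $i\notin D_\lambda$ and $X_i>X_{i+1}$ for $i\in D_\lambda$. Under this identification, the joint law of $(\sigma_\lambda(1)/n,\sigma_\lambda(n)/n)$ agrees, up to an error vanishing uniformly in $\lambda$ by standard order-statistics concentration, with the joint law of $(X_1,X_n)$. Thus it suffices to prove the sup-norm factorization for the density of $(X_1,X_n)$.

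Next I split $\lambda$ into a prefix $\lambda^{(1)}$ (the first run, of length $\leq A$), a long middle block $\lambda^{(2)}$, and a suffix $\lambda^{(3)}$ (the last run, of length $\leq A$). Integrating out all intermediate coordinates, the joint density of $(X_1,X_n)$ takes the form
\begin{equation}
d_{X_1,X_n}(x,y)=\int_0^1\!\!\int_0^1 K_1(x,u)\,\mathcal{T}_{\lambda^{(2)}}(u,v)\,K_3(v,y)\,du\,dv,
\end{equation}
where $K_1,K_3$ are kernels determined by the bounded boundary blocks (hence uniformly bounded and equicontinuous for bounded $A$), and $\mathcal{T}_{\lambda^{(2)}}$ is a product of the elementary "up" and "down" transfer operators that generate the sawtooth model. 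A completely analogous formula holds for each marginal density by replacing the other boundary kernel by a constant marginalisation.

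The heart of the proof is a uniform Perron--Frobenius analysis of $\mathcal{T}_{\lambda^{(2)}}$: these operators are positive and compact, and any sufficiently long product of the two elementary operators is a strict contraction in Hilbert's projective metric on the positive cone, by a Doeblin-type argument on their strictly positive smooth kernels. Consequently there exist a scalar $\mu_{\lambda^{(2)}}>0$ and strictly positive continuous functions $\phi^-,\phi^+$ (depending on $\lambda^{(2)}$) such that
\begin{equation}
\bigl\|\mu_{\lambda^{(2)}}^{-1}\mathcal{T}_{\lambda^{(2)}}-\phi^-\otimes\phi^+\bigr\|_\infty\longrightarrow 0
\end{equation}
as $|\lambda^{(2)}|\to\infty$, uniformly in the shape of $\lambda^{(2)}$. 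Substituting this rank-one approximation in the double integral gives the tensor decomposition $d_{X_1,X_n}(x,y)\approx f(x)g(y)$ with sup-norm error, and matching this factorisation against the same analysis applied to each marginal identifies $f=d_{X_1}$ and $g=d_{X_n}$.

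The main obstacle is the uniformity of the spectral gap: the operator $\mathcal{T}_{\lambda^{(2)}}$ depends on the composition, so a bound on the contraction rate that does not degenerate as $\lambda^{(2)}$ varies is required. I plan to handle this by reducing to the two elementary up/down operators, showing that every product of a bounded number of them has a kernel bounded below by a positive constant on $[0,1]^2$, and invoking Birkhoff's contraction theorem for the Hilbert metric. The boundedness of the first and last runs is used only at the final step, to ensure that $K_1$ and $K_3$ remain bounded and regular as $\lambda$ varies, thereby upgrading the operator-norm factorisation to the $L^\infty$ statement of Theorem \ref{goal1}; without this hypothesis one can still obtain the weak convergence of Theorem \ref{goal2} but not the pointwise bound on densities.
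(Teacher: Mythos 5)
Your proposal takes a genuinely different route from the paper: you propose a transfer-operator/Perron--Frobenius argument with a Birkhoff contraction in the Hilbert projective metric, whereas the paper reduces to Theorem~\ref{indepGen} via Lemma~\ref{NNRPdes}, and then proves Theorem~\ref{indepGen} by a case analysis that combines a probabilistic ``splitting particle'' decorrelation argument (Theorem~\ref{indepPart}, using stochastic dominance, a pigeonhole argument in Proposition~\ref{dependSplitPart}, and the concentration estimate of Proposition~\ref{positParticle}) with two separate lemmas for a large second run (Lemma~\ref{secondRun}) and a large interior run (Proposition~\ref{indeplargerun}). In principle a spectral-gap approach is attractive because it would give a rate, but as written your argument has a concrete gap.

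The key claim ``every product of a bounded number of [the elementary up/down operators] has a kernel bounded below by a positive constant on $[0,1]^2$'' is false. Write $T^+f(y)=\int_0^y f$ and $T^-f(y)=\int_y^1 f$; then $T^+T^-$ has kernel $\min(x,y)$ and $T^-T^+$ has kernel $\min(1-x,1-y)$, both of which vanish on an edge of $[0,1]^2$. Any further composition with a bounded kernel preserves that vanishing. So the Doeblin minorization you rely on does not hold, and the Birkhoff contraction coefficient is \emph{not} uniformly bounded away from $1$ across products. This is exactly the obstruction the paper circumvents by conditioning on the event that some intermediate particle is near $0$ (Proposition~\ref{positParticle}), restoring a uniform quantitative estimate without ever needing a kernel lower bound.

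A second unaddressed issue is that Theorem~\ref{goal1} only bounds the first and last runs, so the middle block $\lambda^{(2)}$ may contain arbitrarily long runs. After integrating out a long interior run of length $\ell$, the corresponding factor in $\mathcal{T}_{\lambda^{(2)}}$ has a kernel proportional to $(y-x)^{\ell-2}\mathbf{1}_{x<y}$, which concentrates near the corner as $\ell\to\infty$; this degeneration makes any uniform spectral-gap statement ``in the shape of $\lambda^{(2)}$'' nontrivial. The paper handles this regime by a separate explicit estimate (Proposition~\ref{indeplargerun}), showing that a long interior run actually \emph{improves} decorrelation at rate $K_A/(R-2)$, and Lemma~\ref{secondRun} treats the case where the degenerate run sits next to a boundary run. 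Your outline would need analogues of both. Finally, Birkhoff contraction gives convergence in the Hilbert projective metric, which controls ratios; to upgrade to the $\Vert\cdot\Vert_\infty$ statement on densities you would still need analogues of Lemmas~\ref{boundDen}, \ref{boundDeriv} and \ref{dens/Cumul}, so the quantitative estimates on densities are not avoided by the spectral route.
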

\subsection{Runs of a composition}
Let $\lambda$ be a composition. We number the cells as we read them, from left to right and from top to bottom \label{coor}. The cells are identified with integers from $1$ to $n$ through this numbering. For example in the standard filling of figure (\ref{fig2}), the number $7$ is in the cell $5$.\\
We call run any set consisting in all the cells of a given column or row. The set of runs is ordered with the lexicographical order. In the same example as before the runs are 
$$s_{1}=(1,2,3),s_{2}=(3,4),s_{3}=(4,5),s_{4}=(5,6),s_{5}=(6,7,8,9),s_{6}=(9,10),$$
where we put in the parenthesis the cells of each run.\\
Note that inside each run the cells are ordered by the natural order on integers. We call extreme cell a cell that is an extremum in a run with respect to this order, and denote by $\mathcal{E}_{\lambda}$ the set of extreme cells of $\lambda$. Apart from the first and last cells of the composition, every extreme belong to two consecutive runs. Let $P_{\lambda}$ be the set of extreme cells followed by a column, or preceeded by a row and $V_{\lambda}$ the set of extreme cells followed by a row or preceeded by a column. The elements of $P_{\lambda}$ are called peaks and the one of $V_{\lambda}$ valleys. The sets $V_{\lambda}$ and $P_{\lambda}$ are also ordered with the natural order: 
$$P_{\lambda}=\lbrace x^{+}_{1}<\dots<x^{+}_{r}\rbrace,V_{\lambda}=\lbrace x^{-}_{1}<\dots<x^{-}_{t}\rbrace,$$
with $r-1\leq t\leq r+1$.\\
The first and last cells are always extreme points. A composition is said being of type ++ (resp. +-,-+,--) if the first cell is a peak and the last cell is a peak (resp peak-valley,valley-peak, valley-valley).\\
Finally let $l(s)$, the length  of a run $s$, be the cardinal of $s$, and $L(\lambda)$, the amplitude of $\lambda$,  be the supremum of all lengths.

\subsection{The coupling method}
In this paragraph we introduce a probabilistic tool called the coupling method, and set the relative notations for the sequel. We refer to \cite{lindvall2002lectures} for a review on the subject. We will present the notions in the framework of random variables but we could have done the same with probability laws as well.
\begin{defn}
Let $(E,\mathcal{E})$ be a probability space and $X,Y$ two random variables on $E$. A coupling of $(X,Y)$ is a random variable $(Z_{1},Z_{2})$ on $(E\times E,\mathcal{E}\otimes,\mathcal{E})$ such that 
$$Z_{1}\sim_{law} X,Z_{2}\sim_{law} Y.$$
\end{defn}
Such a coupling always exists : it suffices to consider two independent random variables $Z_{1}$ and $Z_{2}$ with respective law $\mu_{X}$ and $\mu_{Y}$. However a coupling is often useful precisely when the resulting random variables $Z_{1}$ and $Z_{2}$ are far from being independent. In particlular in this article we are mainly interested in the case where $Z_{1}$ and $Z_{2}$ respect a certain order on the set $E$. From now on $E$ is a Polish space considered with its borelian $\sigma-$algebra $\mathcal{E}$, and $\prec$ a partial order on $E$ such that the graph $\mathcal{G}=\lbrace (x,y),x\prec y\rbrace$ is $\mathcal{E}-$measurable.
\begin{defn}
Let $X,Y$ be two random variables on $E$. $Y$ stochastically dominates $X$ (denoted $Y\succeq X$) if and only if
$$\mathbb{P}(X\in A)\leq\mathbb{P}(Y\in A)$$
for any Borel set $A$ such that 
$$x\in A\Rightarrow \lbrace y\in E, x\prec y\rbrace\subset A.$$
\end{defn}
For example if $E=\mathbb{R}$ with the canonical order $\leq$ and $\sigma-$algebra $\mathcal{B}(\mathbb{R})$, then $Y$ stochastically dominates $X$ if and only if for all $x\in \mathbb{R}$,
$$\mathbb{P}(X\in [x,+\infty[)\leq \mathbb{P}(Y\in [x,+\infty[)$$
or equivalently, if we denote by $F_{X}(t)$ and $F_{Y}(t)$ their respective cumulative distribution function:
$$F_{Y}(t)\leq F_{X}(t)\text{ for all }t\in\mathbb{R}.$$\\
There are several ways to characterize the stochastic dominance:
\begin{prop}\label{definitionStochastDom}
The three following statements are equivalent :
\begin{itemize}
\item $Y$ stochastically dominates $X$
\item there exists a coupling $(Z_{1},Z_{2})$ of $X,Y$ such that $Z_{1}\prec Z_{2}$ almost surely.
\item for any positive measurable bounded function $f$ that is non-decreasing with respect to $\prec$,
$$\mathbb{E}(f(X))\leq \mathbb{E}(f(Y))$$
\end{itemize}
\end{prop}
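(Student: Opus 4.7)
I would establish the cycle of implications $(2) \Rightarrow (3) \Rightarrow (1) \Rightarrow (2)$. The first two directions are routine, while the last is a version of Strassen's coupling theorem and contains all the content.

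The implication $(2) \Rightarrow (3)$ is immediate: if $(Z_{1}, Z_{2})$ is a coupling realizing $Z_{1} \prec Z_{2}$ almost surely and $f$ is bounded, measurable, and non-decreasing with respect to $\prec$, then $f(Z_{1}) \leq f(Z_{2})$ almost surely, so integration gives $\mathbb{E}(f(X)) = \mathbb{E}(f(Z_{1})) \leq \mathbb{E}(f(Z_{2})) = \mathbb{E}(f(Y))$. For $(3) \Rightarrow (1)$, given a Borel set $A$ satisfying the upper-closure condition in the definition of stochastic dominance, the indicator $\mathbf{1}_{A}$ is positive, bounded, measurable, and non-decreasing along $\prec$, so applying $(3)$ to it yields $\mathbb{P}(X \in A) \leq \mathbb{P}(Y \in A)$.

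The substantive implication $(1) \Rightarrow (2)$ is Strassen's theorem. My plan is to first treat the discrete case where $\mu_{X}$ and $\mu_{Y}$ are supported on a common finite set $F \subset E$: there, producing a coupling concentrated on $\mathcal{G}$ amounts to a finite transportation problem, whose solvability follows from a Hall-type marriage criterion in which the combinatorial inequality for every subset $S \subset F$ of the $\mu_{X}$-support, namely $\mu_{X}(S) \leq \mu_{Y}(S^{\uparrow})$ with $S^{\uparrow} = \{y \in F : \exists\, x \in S,\ x \prec y\}$, is exactly the stochastic dominance hypothesis applied to the upper set $S^{\uparrow}$. I would then approximate $\mu_{X}$ and $\mu_{Y}$ by such finitely-supported measures obtained from successive refinements of measurable partitions of $E$, arranging the partitions so that the atomic approximations still satisfy stochastic dominance, and extract a coupling $\pi^{(n)}$ at each level.

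The main obstacle will be the passage to the limit. Prokhorov tightness of $\mu_{X}$ and $\mu_{Y}$ on the Polish space $E$ forces tightness of the sequence $(\pi^{(n)})$ on $E \times E$, hence a weakly convergent subsequence whose limit $\pi$ has the correct marginals. The delicate point is to certify that $\pi$ charges $\mathcal{G}$ rather than only its closure: when $\prec$ is a closed partial order this is automatic by the portmanteau theorem applied to the closed set $\mathcal{G}$, but in the general Borel-measurable setting one has to invoke a measurable selection argument, or equivalently a Skorokhod-type representation, to exhibit an actual pair $(Z_{1}, Z_{2})$ with law $\pi$ satisfying $Z_{1} \prec Z_{2}$ almost surely. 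This is exactly where the assumption that $\mathcal{G}$ belongs to $\mathcal{E} \otimes \mathcal{E}$ is used.
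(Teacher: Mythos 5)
The paper does not supply its own argument for this proposition: it says the proof ``is straightforward and can be found in \cite{lindvall2002lectures},'' so there is no internal proof to compare against line by line. Your cycle $(2)\Rightarrow(3)\Rightarrow(1)\Rightarrow(2)$ is the standard organization, your treatment of the first two arrows is correct and complete, and your sketch of $(1)\Rightarrow(2)$ via finite approximation, a Hall/max-flow criterion, tightness, and a weak limit is precisely the classical proof of Strassen's theorem that Lindvall gives. So in spirit your route and the cited reference coincide.

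Where you should be more careful is the closing paragraph. You correctly isolate the delicate point: the portmanteau argument needs $\mathcal{G}$ to be closed in $E\times E$ in order to conclude $\pi(\mathcal{G})=1$ from $\pi^{(n)}(\mathcal{G})=1$; weak convergence alone only controls the closure $\overline{\mathcal{G}}$. But the remedy you propose for general Borel $\mathcal{G}$ --- ``invoke a measurable selection argument, or equivalently a Skorokhod-type representation'' --- is a gesture, not an argument. A Skorokhod representation of the limit law $\pi$ produces a pair $(Z_1,Z_2)$ with law $\pi$, but says nothing about whether $\pi$ actually charges $\mathcal{G}$ rather than only $\overline{\mathcal{G}}$, which is exactly what remains to be shown. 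Strassen's theorem is standardly stated and proved under the hypothesis that the relation is closed, and the purely measurable case is a genuinely harder extension, not a routine one; you should not present it as resolved. In the present paper this does not cause trouble, because every partial order actually used (the orders $\leq_{\epsilon}$ on $\mathbb{R}^n$ and the derived product orders such as $\ll$ in Lemma \ref{stochdom}) has a closed graph, so the closed-order version of Strassen is all that is needed. If you wanted your write-up to be self-contained, the cleanest fix would be to either add the hypothesis that $\mathcal{G}$ is closed, or to observe at this point that only closed orders will ever be invoked.
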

The proof is straightforward and can be found in \cite{lindvall2002lectures}. This yields the following intuitive Lemma :
\begin{lem}\label{stochdom}
Let $(X_{1},X_{2}),(Y_{1},Y_{2})$ be two couples of independent random variables on $E\times E$ such that $X_{1}\preceq Y_{1}$ and $Y_{2}\preceq X_{2}$. Then 
$$\mathbb{P}(X_{1}\prec X_{2})\geq \mathbb{P}(Y_{1}\prec Y_{2}).$$
\end{lem}
\begin{proof}
Let $\ll$ be the partial order on $E\times E$ defined by
$$(x,y)\ll(x',y')\leftrightarrow x\prec x' \text{ and } y'\prec y.$$
If $Y_{1}\succeq X_{1}$ and $X_{2}\succeq Y_{2}$, there exists a coupling $(\hat{X}_{1},\hat{Y}_{1})$ (resp. $(\hat{X}_{2},\hat{Y}_{2})$) of $X_{1},Y_{1}$ (resp. $X_{2},Y_{2}$) such that almost surely $\hat{X}_{1}\prec \hat{Y}_{1}$ (resp $\hat{X}_{2}\succ \hat{Y}_{2}$). These two couplings can be chosen independent. Since $(X_{1},Y_{1})$ and $(X_{2},Y_{2})$ are also independent, this implies that $(\hat{X}_{1}\otimes\hat{X}_{2},\hat{Y}_{1}\otimes\hat{Y}_{2})$ is a coupling of $((X_{1},X_{2}),(Y_{1},Y_{2}))$ with almost surely
$$(\hat{X}_{1},\hat{X}_{2})\ll (\hat{Y}_{1},\hat{Y}_{2}).$$
But if 
$\hat{Y}_{1}\prec \hat{Y}_{2}$, then $\hat{X_{1}}\prec \hat{Y}_{1}\prec \hat{Y}_{2}\prec \hat{X}_{1}$
and thus 
$$
\mathbb{P}(Y_{1}\prec Y_{2})=\mathbb{P}(\hat{Y}_{1}\prec \hat{Y}_{2})
\leq  \mathbb{P}(\hat{X}_{1}\prec \hat{X}_{2})=\mathbb{P}(X_{1}\prec X_{2}).$$
\end{proof}
These results will be concretly applied on $\mathbb{R}^{n}, n\geq 1$, and thus we need to define a family of partial order on those sets.
\begin{defn}
Let $n\geq 1$. The partial order $\leq$ on $\mathbb{R}^{n}$ is the natural order on $\mathbb{R}$ for $n=1$, and for $n\geq 2$ if $(x_{i})_{1\leq i\leq n},(y_{i})_{1\leq i\leq n}\in \mathbb{R}^{n}$,
$$(x_{i})_{1\leq i\leq n}\leq (y_{i})_{1\leq i\leq n}\Leftrightarrow \forall i\in[1; n],  x_{i}\leq y_{i}.$$
For any word of length $n$ in $\lbrace 1,0\rbrace$, the modified partial order $\leq_{\epsilon}$ is defined as 
$$(x_{i})_{1\leq i\leq n}\leq (y_{i})_{1\leq i\leq n}\Leftrightarrow \forall i\in[1;n], (-1)^{\epsilon_{i}}x_{i}\leq (-1)^{\epsilon_{i}}y_{i}.$$
\end{defn}
The easiest way to check the stochastical dominance is to look at the cumulative distribution function. The proof of the following Lemma is a direct application of Proposition \ref{definitionStochastDom}.
\begin{lem}\label{stodomrn}
Let $(X_{i})_{1\leq i\leq n}$ and $(Y_{i})_{1\leq i\leq n}$ be two random variables of $(\mathbb{R}^{n},\leq_{\epsilon})$. Then $(Y_{i})_{1\leq i\leq n}$ stochastically dominates $(X_{i})_{1\leq i\leq n}$ if and only if for all $(t_{i})_{1\leq i\leq n}\in\mathbb{R}^{n}$,
$$F_{(X_{i})}(t_{1},\dots,t_{n})\geq_{\epsilon} F_{(Y_{i})}(t_{1},\dots,t_{n}).$$
\end{lem}
The stochastic dominance in the case $(\mathbb{R}^{n},\leq_{\epsilon})$ is denoted as $(X_{1},\dots,X_{n})\preceq_{\epsilon}(Y_{1},\dots, Y_{n})$. A consequence of the previous result is that if $(Y_{1},\dots, Y_{n})$ stochastically dominates $(X_{1},\dots,X_{n})$, then for all subsets $I=(i_{1},\dots,i_{r})$ of $\lbrace 1,\dots,n\rbrace$, $(Y_{i_{1}},\dots,Y_{i_{r}})$ also stochastically dominates $(X_{i_{1}},\dots,X_{i_{r}})$.  \\
Applying Lemma \ref{stodomrn} to the case $n=2$ yields the following Lemma:
\begin{lem}\label{stochasticDominationCondition}
Let $(U_{1},V_{1}),(U_{2},V_{2})$ be two random variables on $[0,1]$ such that $U_{2}$ and $V_{2}$ are independent. Suppose that for all $0\leq t\leq 1$, 
$$F_{V_{1}}(t)\leq F_{V_{2}}(t)$$
and for all $v\in[0,1]$,
$$F_{U_{1}\vert V_{1}=v}(t)\leq F_{U_{2}}(t).$$
There existe a coupling $((Z_{1},\tilde{Z}_{1}),(Z_{2},\tilde{Z}_{2}))$ of $(U_{1},V_{1})$ and $(U_{2},V_{2})$ such that almost surely 
$$(Z_{1},\tilde{Z}_{1})\geq (Z_{2},\tilde{Z}_{2}) .$$
\end{lem}
\section{Sawtooth model}
\subsection{Definition of the model}

In this section we introduce a statistical model of particles in a tube, which is a generalization of the probabilistic approach of Ehrenborg, Levin and Readdy in \cite{ehrenborg2002probabilistic}. The model consists in a sequence of particles, each of them moving vertically in an horizontal two-dimensional tube. Each particle has a repulsive action on the two neighbouring particles, and moreover the set of particles splits into two groups: the upper particles and the lower particles. The upper particles are always above the lower ones. The model is depicted in Figure \ref{fig3}.
\begin{figure}[!h]

\begin{tikzpicture}[scale=0.5][center]

\draw (0,0) -- (12,0);
\draw (0,6)-- (12,6);
\draw [dashed] (1,0) to (1,6);
\draw [dashed] (5,0) to (5,6);
\draw [dashed] (9,0) to (9,6);
\draw [dashed] (3,6) to (3,0);
\draw [dashed] (7,6) to (7,0);
\draw [dashed] (11,6) to (11,0);
\node [draw] [circle,fill=white](q1) at (1,2) {$q_{1}$};
\node [draw] [circle,fill=white](q2) at (5,1) {$q_{2}$};
\node [draw] [circle,fill=white](q3) at (9,3) {$q_{3}$};
\node [draw] [circle,fill=white](p1) at (3,4) {$p_{1}$};
\node [draw] [circle,fill=white](p2) at (7,5) {$p_{2}$};
\node [draw] [circle,fill=white](p3) at (11,5) {$p_{3}$};

\draw [<->,thick](q1.north east) to (p1.south west);
\draw [<->,thick](p1.south east) to (q2.north west);
\draw [<->,thick](q2.north east) to (p2.south west);
\draw [<->,thick](p2.south east) to (q3.north west);
\draw [<->,thick](q3.north east) to (p3.south west);

\end{tikzpicture}
\caption{\label{fig3}Repulsive particles in a tube}

\end{figure}
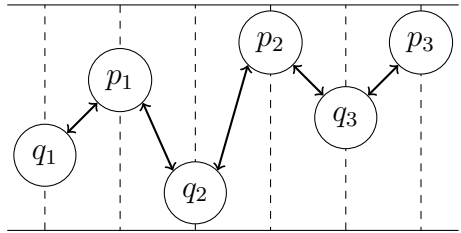\\
Such a system is called a Sawtooth model in the sequel.\\
\begin{rmq} \label{typeModel} 
If there are $n$ upper-particles, there must be $m$ lower particles with $m\in\lbrace n-1,n,n+1\rbrace$, depending on what is the type of the first and the last particles. We define therefore the type $\epsilon(\mathcal{S})$ of the model $\mathcal{S}$ as the word $\epsilon_{I}\epsilon_{F}$, with $\epsilon_{I}=+$ (resp. $\epsilon_{F}=+$) if the first (resp. last) particle is an upper one, and $\epsilon_{I}=-$ (resp. $\epsilon_{F}=-$) otherwise.
\end{rmq}
Unless specified otherwise, the first particle is a lower particle (as in the picture). The particles are ordered from the left, and following this order the upper particles are written $\lbrace p_{1}<p_{2}<\dots<p_{n}\rbrace$ and the lower particles $\lbrace q_{1}<\dots<q_{m}\rbrace$. Since the nature of our results won't depend of the type of the model, we will also assume that there are $n+1$ lower particles, yielding that the last particle is a lower one too.\\
Let $x_{i}$ be the position of $q_{i}$, $y_{i}$ the position of $p_{i}$ and denote by $\xi_{i}(x_{i},y_{i})$ (resp. $\rho_{i}(y_{i},x_{i+1})$) the potential of the repulsive force between $q_{i}$ and $p_{i}$ (resp. $p_{i}$ and $q_{i+1}$).  The probability to get a configuration $\lbrace x_{i},y_{i}\rbrace$ at the Gibbs equilibrium with a temperature $T$ is :
\begin{equation}\label{gibbs}\tag{$\ast$}
d\mathbb{P}_{Gibbs}(\lbrace x_{i},y_{i}\rbrace)=\frac{1}{\mathcal{Z}}\exp(-\frac{\sum(\xi_{i}(x_{i},y_{i})+\rho_{i}(y_{i},x_{i+1}))}{k_{B}T}).
\end{equation}
From now on we assume that the potentials only depend on the relative positions of the particles, namely $\xi_{i}(y_{i},x_{i+1})=\tilde{f}_{i}(\vert y_{i}-x_{i+1}\vert)$ and $\rho_{i}(x_{i},y_{i})=\tilde{g}_{i}(\vert x_{i+1}-y_{i}\vert)$ for some functions $\tilde{f}_{i},\tilde{g}_{i}$. Since the forces are repulsive, $\tilde{f}_{i}$ and $\tilde{g}_{i}$ must be decreasing. Moreover by a rescaling we can assume that $x_{i},y_{i}\in [0,1]$.\\
Aiming the results we stated on compositions, we should answer these questions :
\textit{
\begin{enumerate}
\item As the number of particles goes to infinity, is there some independence between $X_{1}$ and $X_{n+1}$ ? 
\item It is possible to estimate the behavior of a particle $X_{r}$ by only considering its neighbouring particles ?
\end{enumerate}
}
The probability space at the equilibrium can be simplified : 
\begin{defn}
A Sawtooth model $\mathcal{S}$ is the data of :
\begin{itemize}
\item $\lbrace \mu_{i},\nu_{i}\rbrace$ a collection of finite measures on $[0,1]$ with respective density functions $\lbrace f_{i},g_{i}\rbrace _{1\leq i\leq n}$, each of them being an increasing $C^{1}$  function on $[0,1]$.
\item A probability space $\Omega(\lbrace f_{i},g_{i}\rbrace)=([0,1]^{n+1}\times[0,1]^{n},\mathbb{P})$ with probability density 
$$d\mathbb{P}(\lbrace x_{i},y_{j}\rbrace)=\frac{1}{\mathcal{V}}\prod \mathbf{1}_{x_{i}\leq y_{i}\geq x_{i+1}}f_{i}(y_{i}-x_{i})g_{i}(y_{i}-x_{i+1}).$$
The quantity $\mathcal{V}$ is called the volume of $\mathcal{S}$ and is sometimes denoted $\mathcal{V}(\mathcal{S})$ to avoid confusion.
\item $2n+1$ random variables $\lbrace X_{i}\rbrace$ and $\lbrace Y_{i}\rbrace$ corresponding to the $2n+1$ coordinates on $[0,1]^{n+1}\times[0,1]^{n}$.
\end{itemize}
$\mathcal{S}$ is said renormalized if each $\mu_{i},\nu_{i}$ is a probability measure.
\end{defn}
If we set $f_{i}(r)=\exp(-\tilde{f}_{i}(r)/(k_{B}T)$ and $g_{i}(r)=\exp(-\tilde{g}_{i}(r)/(k_{b}T)$, we recover the density of \eqref{gibbs}. 
The volume has the following expression: 
\begin{equation}\label{volume}
\mathcal{V}(\mathcal{S})=\int_{[0,1]^{2n+1}}\prod \mathbf{1}_{x_{i}\leq y_{i}\geq x_{i+1}}f_{i}(y_{i}-x_{i})g_{i}(y_{i}-x_{i+1})\prod dx_{i}dy_{i}.
\end{equation}
In particular an appropriate rescaling of the measures $\mu_{i},\nu_{i}$ can transform any Sawtooth model into a normalized one, without changing the probability space. Thus from now on and unless stated otherwise, the model is assumed normalized. In case we are considering non-normalized models, we will use the notation $f_{i},g_{i},$etc. for the normalized quantities, and $\tilde{f}_{i},\tilde{g}_{i},etc.$ for the non-renormalized one.\\
For each subset of particles $A=(q_{i_{1}},\dots,q_{i_{k}},p_{j_{1}},\dots,p_{j_{k'}})$ and measurable event $\mathcal{X}$, denote by
$$d_{A\vert \mathcal{X}}(x_{i_{1}},\dots,x_{i_{k}},y_{j_{1}},\dots,y_{j_{k'}})$$
the marginal density of $A$ conditioned on $\mathcal{X}$. The subscripts will be dropped when there is no confusion, and we denote by $X_{I}$ the first variable $X_{1}$ and $X_{F}$ the last particle $X_{n+1}$.
Finally since the system is fully described by the functions $\lbrace f_{i},g_{j}\rbrace$, we will refer sometimes to a particular system just by mentioning this set of functions.\\
The definition of a Sawtooth model yields directly two first facts. The first result stresses the Markovian aspect of a Sawtooth model :
\begin{lem}\label{markov}
Let $\mathcal{S}$ be a Sawtooth model of size $n$, and $1\leq i_{1}<i_{2},\dots,i_{r}\leq n$ be distinct indices. Then for all $x_{i_{1}},\dots,x_{i_{r}}\in[0,1]$, and $i<i_{1}$,
$$d_{X_{i}\vert X_{i_{1}}=x_{i_{1}},\dots,X_{i_{r}}=x_{i_{r}}}=d_{X_{i}\vert X_{i_{1}}=x_{i_{1}}}.$$
\end{lem}
The proof is a straightforward rephrasing of the density of the model.\\
The second one is a generalization of Lemma $3-(a)$ in \cite{bender2004asymptotics}. :
\begin{lem}\label{monden}
Let $1\leq r\leq n+1$, and let $\mathcal{X}$ be an event depending on the position of all particles except $X_{r}$. Then $d_{X_{r}\vert\mathcal{X}}(x_{r})$ is decreasing in $x_{r}$.
\end{lem}
\begin{proof}
Let $a$ be in $[0,1]$. By Lemma \ref{markov},
\begin{align*}
d_{X_{r}\vert \mathcal{X}}(a)=&\int_{[0,1]^{2}}d_{(X_{r}\vert \mathcal{X})\vert Y_{r-1}=z,Y_{r+1}=z'}(a)d_{Y_{r-1},Y_{r+1}\vert \mathcal{X}}(z,z')dzdz'\\
=&\int_{[0,1]^{2}}d_{X_{r}\vert Y_{r-1}=z,Y_{r+1}=z'}(a)d_{Y_{r-1},Y_{r+1}\vert \mathcal{X}}(z,z')dzdz'.
\end{align*}
Thus it suffices to prove the monotonity in the case of a conditioning on $Y_{r-1}=z,Y_{r+1}=z'$. In this case
$$d_{X_{r}\vert Y_{r-1}=z,Y_{r+1}=z'}(a)=\mathbf{1}_{z\geq a,z'\geq a}\frac{1}{R}(g_{r-1}(z-a)f_{r}(z'-a)),$$
with $R$ a renormalizing constant.
Since since $g_{r-1}$ and $f_{r}$ are increasing, this concludes the proof.
\end{proof}
The same result holds for upper particles, but in this case the density is increasing.\\

\subsection{The processes $\mathcal{S}_{\lambda}$ and $\Sigma_{\lambda}$}
Let us see how these definitions fit into the framework of compositions. The main idea from \cite{ehrenborg2002probabilistic} is to consider the set of all permutations with a given descent set $D_{\lambda}$ as a probability space.\\
$\vert\Sigma_{\lambda}\vert$ can indeed be related to the volume of a polytope in $[0,1]^{n}$ (see for example the survey of Stanley on alternating permutations, \cite{stanley2010survey}) . For each sequence of distincts elements $\vec{\xi}=(\xi_{1},\dots,\xi_{n})$ in $[0,1]$, let $std^{-1}(\vec{\xi})$ (the inverse standardization of $\vec{\xi}$) be the permutation that assigns to each $j$ the index $i_{j}$ in the reordering $(\xi_{i_{1}}<\dots<\xi_{i_{n}})$.
\begin{prop}\label{voldes}
Let $\lbrace x_{i}\rbrace_{1\leq i\leq n}$ be a collection of independent uniform random variables on $[0,1]$. Then the law of $\sigma_{\lambda}$ is the law of $std^{-1}(x_{1},\dots,x_{n})$ conditioned on the fact that $x_{i}>x_{i+1}$ if and only if $i\in D_{\lambda}$. In particular the following expression of the number of permutations with descent set $D_{\lambda}$ holds :
$$\vert\Sigma_{\lambda}\vert =n!\int_{[0,1]^{n}}\prod_{i\in D_{\lambda}}\mathbf{1}_{x_{i}\geq x_{i+1}}\prod_{i\not \in D_{\lambda}}\mathbf{1}_{x_{i}\leq x_{i+1}}\prod dx_{i},$$
with $x_{n+1}=1$.
\end{prop}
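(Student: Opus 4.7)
The plan is to exploit the classical decomposition of $[0,1]^n$ into $n!$ open simplices induced by the ordering of coordinates. Almost surely the coordinates of $\vec{x}=(x_1,\dots,x_n)$ are pairwise distinct, so $std^{-1}(\vec{x})$ is well-defined and identifies the unique simplex $\Delta_\tau=\{\vec{x}\in[0,1]^n: x_{\tau(1)}<\dots<x_{\tau(n)}\}$ containing $\vec{x}$. These $n!$ simplices form a partition of $[0,1]^n$ up to a Lebesgue null set, and an elementary change of variables shows that each $\Delta_\tau$ has Lebesgue measure $1/n!$.

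Next I would match the descent condition on $\vec{x}$ with the combinatorial data of the ambient simplex. A short inspection shows that on each $\Delta_\tau$ the sign of $x_i-x_{i+1}$ is constant, and the set $\{i:x_i>x_{i+1}\}$ coincides with the descent set of the permutation that $std^{-1}$ reads off from the ordering of the $x_i$. Consequently the event $E_\lambda=\{\vec{x}: x_i>x_{i+1}\Leftrightarrow i\in D_\lambda\}$ is, up to a null set, the disjoint union of exactly $\vert\Sigma_\lambda\vert$ simplices $\Delta_\tau$, in canonical bijection with the elements of $\Sigma_\lambda$.

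From this I would deduce both claims of the proposition at once. Since $E_\lambda$ is a disjoint union of $\vert\Sigma_\lambda\vert$ simplices of measure $1/n!$, its Lebesgue measure equals $\vert\Sigma_\lambda\vert/n!$; multiplying by $n!$ and splitting the indicator function as a product over $i\in D_\lambda$ and $i\notin D_\lambda$ then yields the claimed integral formula (the convention $x_{n+1}=1$ trivially enforces the final ascent, since $n\notin D_\lambda$ always). The distributional statement follows because the conditional law of $\vec{x}$ given $E_\lambda$ is uniform over these simplices, so $std^{-1}(\vec{x})$ is uniform on $\Sigma_\lambda$, matching the law of $\sigma_\lambda$. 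No serious obstacle appears; the only delicate point is the bookkeeping needed to verify that the bijection between simplices and elements of $\Sigma_\lambda$ truly matches descent sets, which is a purely combinatorial check.
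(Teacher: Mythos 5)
Your approach --- decomposing $[0,1]^n$ into the $n!$ order-simplices $\Delta_\tau$ of volume $1/n!$ and observing that the conditioning event is a disjoint union of exactly $|\Sigma_\lambda|$ of them --- is precisely the argument the paper has in mind; the paper's proof consists of the single remark that each such polytope has volume $1/n!$, leaving the rest as ``straightforward.'' One word of caution on the ``purely combinatorial check'' you defer at the end: on $\Delta_\tau=\{x_{\tau(1)}<\dots<x_{\tau(n)}\}$ one has $std^{-1}(\vec{x})=\tau$, but $\{i:x_i>x_{i+1}\}=\{i:\tau^{-1}(i)>\tau^{-1}(i+1)\}=D(\tau^{-1})$, i.e.\ the recoil set of $\tau$ rather than its descent set, so the identification you assert is really between simplices and permutations whose \emph{inverse} lies in $\Sigma_\lambda$. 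This does not affect the integral formula (inversion is a cardinality-preserving bijection between $\{\sigma:D(\sigma)=D_\lambda\}$ and $\{\sigma:D(\sigma^{-1})=D_\lambda\}$), and the same imprecision is already present in the paper's statement, so it is not a gap you introduced; but if you want the distributional claim to read cleanly you should either condition on $\{i:x_{\tau^{-1}(i)}>x_{\tau^{-1}(i+1)}\}=D_\lambda$ or replace $std^{-1}$ by $std$.
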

The proof of the latter proposition is straightforward as soon as we remark that the volume of the polytope $\lbrace 0\leq x_{1},\dots, x_{n}\leq 1\rbrace$ is exactly $\frac{1}{n!}$. Since the indicator function in the integrand depends on conditions between neighbouring points, this result can be rephrased in terms of Sawtooth model.\\
Regrouping the inequalities between elements of the same run of $\lambda$ yields:
\begin{equation}\label{eqnLoi}
\vert\Sigma_{\lambda}\vert=n!\int_{[0,1]^{n}}\mathbf{1}_{x_{1}\leq x_{2}\leq\dots \leq x_{i_{1}}}\mathbf{1}_{x_{i_{1}}\geq x_{i_{1}+1}\geq \dots\geq x_{i_{1}+i_{2}} }\dots\mathbf{1}_{x_{n-i_{2r}}\leq \dots\leq x_{n}}\prod dx_{i},
\end{equation}
and by integrating over all the coordinates that do not correspond to extreme cells, we get 
\begin{align*}
\vert\Sigma_{\lambda}\vert=&n!\int_{[0,1]^n}\mathbf{1}_{x_{1}^{-}\leq x_{1}^{+}\geq x_{2}^{-}\leq\dots}\frac{1}{(l(s_{1})-2)!}\vert x_{1}^{+}-x_{1}^{-}\vert^{l(s_{1})-2}\\
&\frac{1}{(l(s_{2})-2)!}\vert x_{1}^{+}-x_{2}^{-}\vert^{l(s_{2})-2}\dots \frac{1}{l(s_{2r})-2}\vert x_{r}^{+}-x_{r+1}^{-}\vert^{l(s_{2r})-2}\prod dx_{i}^{+}\prod dx_{i}^{-}.\\
\end{align*}
Let $\mathcal{S}_{\lambda}$ be the non-renormalized Sawtooth model with the non-renormalized density functions $\lbrace \tilde{f}_{j},\tilde{g}_{j}\rbrace_{1\leq i\leq r}$ such that 
$$\tilde{f}_{j}(t)=\frac{1}{(l(s_{2j-1})-2)!}t^{l(s_{2j-1})-2}, \tilde{g}_{j}(t)=\frac{1}{(l(s_{2j})-2)!}t^{l(s_{2j})-2}.$$
A comparison between the latter expression of $\vert \Sigma_{\lambda}\vert $ and the expression \eqref{volume} of the volume of a Sawtooth model gives
$$\vert\Sigma_{\lambda}\vert=\vert \lambda\vert !\mathcal{V}(\mathcal{S}_{\lambda})$$
To sum up, two processes are constructed from $\lambda$. The first one, $\sigma_{\lambda}$ comes from the uniform random standard filling of the ribbon Young tableau $\lambda$, and the second one comes from the construction of an associated model $\mathcal{S}_{\lambda}$. They are of course intimely related, even if the first one is discrete and the second one continuous. $\sigma_{\lambda}$ can be recovered from $\mathcal{S}_{\lambda}$ by the inverse standardization, and when $\vert\lambda\vert$ goes to infinity $(\frac{\sigma_{\lambda}(1)}{n+1},\frac{\sigma_{\lambda}(n)}{n+1})$ and $(X_{I},X_{F})$ are approximately the same :
\begin{lem}\label{NNRPdes}
The following inequality always holds for $0<\epsilon<1, n\in\mathbb{N}$:
$$\mathbb{P}(\sup(\vert \frac{\sigma_{\lambda}(1)}{n+1}-X_{I}\vert,\vert\frac{\sigma_{\lambda}(n+1)}{n}-X_{F}\vert>\frac{A}{\sqrt{n+2}})\leq \frac{2}{A^{2}}$$
In particular if the densities of $x_{I}$ and $x_{F}$ remain bounded by a constant $B$,
$$\Vert F_{(X_{I},X_{F})}-F_{\frac{\sigma(1)}{n+1},\frac{\sigma(n)}{n+1}}\Vert\rightarrow_{\lambda\rightarrow +\infty} 0.$$ 
\end{lem}
\begin{proof}
Let us evaluate $\mathbb{P}(\vert \frac{\sigma_{\lambda}(1)}{n}-X_{I}\vert>\frac{A}{n+2})$. Let condition this on a particular realization $\sigma$ of $\sigma_{\lambda}$, and suppose that $\sigma(1)=k$. In this case, the conditional density of $X_{I}$ is :
\begin{align*}
d_{X_{I}\vert \sigma_{\lambda}=\sigma}(x_{I})=&n!(\int_{0\leq x_{\sigma^{-1}(1)}\leq\dots\leq x_{\sigma^{-1}(k-1)} \leq x_{1}}\prod_{1\leq \sigma(i)\leq k-1}dx_{i})\\
&\quad\quad\quad\quad(\int_{x_{1}\leq x_{\sigma^{-1}(k+1)}\leq\dots\leq x_{\sigma^{-1}(n)}\leq 1}\prod_{k+1\leq \sigma(i)\leq 1}dx_{i})\\
=&\frac{n!}{(k-1)!(n-k)!}x_{I}^{k-1}(1-x_{I})^{n-k}.
\end{align*}
Computing the conditional expectation yields $\mathbb{E}(X_{I}\vert \sigma)=\frac{k}{n+1}$ and 
$$Var(x_{I}\vert \sigma)=(\frac{k}{n+1}\frac{n+1-k}{n+1})\frac{1}{n+1}\leq \frac{1}{n+2}.$$
Thus by the Chebyshev's inequality,
$$\mathbb{P}_{X_{I}\vert \sigma_{\lambda}=\sigma}(\vert X_{I}-\frac{\sigma(1)}{n+1}\vert>\frac{A}{\sqrt{n+2}})\leq \frac{1}{A^{2}}.$$
Integrating this inequality on all the disjoint events $\sigma$ on which $X_{I}$ can be conditioned yields the first fart of the Lemma. The second part is straightforward.
\end{proof}
In the sequel let $\tilde{\gamma}_{r}$ denote for $r\geq 2$ the function $\tilde{\gamma}_{r}(t)=\frac{1}{(r-2)!}t^{r-2}$, and $\gamma_{r}(t)=(r-1)t^{r-2}$ its renormalized density function.
\section{Convex Sawtooth Model}
\subsection{Log-concave densities}
To be able to get some results on the behavior of the particles, it is necessary to impose some conditions on the density functions $\lbrace f_{i},g_{i}\rbrace $. Actually the condition we need is quite natural from a physical point of view, since we will require that the repulsive forces in the definition of the Sawtooth model come from a convex potential : the consequence is that the density functions should be log-concave. This motivates the following definition :
\begin{defn}
A Sawtooth model is called convex if all the functions $(f_{i},g_{i})_{1\leq i\leq n}$ are log-concave. This means that for all $1\leq i\leq n$, $\frac{f_{i}'(t)}{f_{i}(t)}$ and $\frac{g_{i}'(t)}{g_{i}(t)}$ are decreasing.
\end{defn}
The main advantage of the log-concavity is that the behavior of the particles becomes monotone in a certain sense. For $1\leq s\leq n+1$ denote by $\mathcal{S}_{\rightarrow P_{s}}$ (resp. $\mathcal{S}_{P_{s}\leftarrow}$) the Sawtooth model obtained by keeping only the particles and interactions between $X_{I}$ and $P_{s}$ (resp. $P_{s}$ and $X_{F}$).
\begin{prop}\label{monoton}
Let $\lbrace f_{i},g_{i}\rbrace$ be a convex Sawtooth model. Then for $1\leq s\leq n$, $0\leq t\leq 1$, 
$F_{X_{s}\vert Y_{s}=y}(t)$ is decreasing in $y$, and  $F_{Y_{s}\vert X_{s+1}=x}(t)$  is decreasing in $x$. Moreover 
$$F_{X_{s}\vert Y_{s}=y}(t)\geq F_{X_{s}\vert \mathcal{S}_{\rightarrow X_{s}}}(t)$$
and 
$$F_{Y_{s}\vert X_{s+1}=x}(t)\leq F_{Y_{s}\vert \mathcal{S}_{\rightarrow Y_{s}}}(t)$$
\end{prop}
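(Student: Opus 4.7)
The plan is to reduce each of the four statements to a monotone-likelihood-ratio (MLR) comparison on one-dimensional conditional densities. First, using the Markov property of Lemma~\ref{markov}, I would integrate out $X_{s+1}, Y_{s+1}, \ldots, X_{F}$ with $Y_s = y$ held fixed, obtaining
$$d_{X_s \vert Y_s = y}(x) = \frac{\mathbf{1}_{x \leq y}\, f_s(y - x)\, h_s(x)}{\int_0^y f_s(y - u)\, h_s(u)\, du},$$
where $h_s(x) := d_{X_s \vert \mathcal{S}_{\to X_s}}(x)$ is the marginal density of $X_s$ in the truncated submodel (which contains $f_1, g_1, \ldots, g_{s-1}$ but not $f_s$). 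The entire dependence on $y$ is concentrated in the factor $f_s(y - x)\mathbf{1}_{x \leq y}$. A symmetric formula with $g_s$, $h'_s := d_{Y_s \vert \mathcal{S}_{\to Y_s}}$ and indicator $\mathbf{1}_{y \geq x}$ holds for $d_{Y_s \vert X_{s+1} = x}(y)$.

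The two comparison inequalities against the truncated submodel follow immediately from this factorisation. The likelihood ratio $d_{X_s \vert Y_s = y}(x)/h_s(x)$ is proportional to $\mathbf{1}_{x \leq y} f_s(y - x)$, which is non-increasing in $x$ since $f_s$ is increasing; this step only uses monotonicity of $f_s$, not log-concavity. By Lemma~\ref{stodomrn} this yields $F_{X_s \vert Y_s = y}(t) \geq F_{X_s \vert \mathcal{S}_{\to X_s}}(t)$, and the symmetric computation with $g_s$ gives the analogous inequality for $Y_s \vert X_{s+1} = x$.

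Log-concavity enters precisely for the monotonicity in the conditioning parameter. For $y < y'$, the likelihood ratio of the two conditional densities is, up to a normalising constant, $\frac{\mathbf{1}_{x \leq y'}\, f_s(y' - x)}{\mathbf{1}_{x \leq y}\, f_s(y - x)}$. Writing $\phi = \log f_s$, concavity of $\phi$ gives
$$\frac{d}{dx}\bigl[\phi(y' - x) - \phi(y - x)\bigr] = \phi'(y - x) - \phi'(y' - x) \geq 0,$$
because $\phi'$ is non-increasing and $y - x \leq y' - x$. Thus the ratio is non-decreasing in $x$ on $[0, y]$, while on $(y, y']$ the denominator vanishes and the numerator is positive, so the ratio is $+\infty$ there. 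MLR (via Lemma~\ref{stodomrn}) then gives $F_{X_s \vert Y_s = y'}(t) \leq F_{X_s \vert Y_s = y}(t)$, and the identical argument using log-concavity of $g_s$ yields the statement for $F_{Y_s \vert X_{s+1} = x}$.

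The only point requiring care is the MLR comparison at the truncation boundary, where one of the two conditional densities vanishes; but the support-inclusion $[0, y] \subset [0, y']$ combined with the pointwise MLR on the overlap closes this cleanly. The entire substantive content of the proof is the one-line derivative computation using concavity of $\log f_s$ and $\log g_s$, with everything else being bookkeeping around the Markov factorisation and the indicator constraints.
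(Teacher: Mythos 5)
Your proof is correct, and it takes a cleaner route than the paper's even though both hinge on the same key factorisation $d_{X_s\mid Y_s=y}(x)\propto \mathbf{1}_{x\leq y}\,f_s(y-x)\,h_s(x)$ obtained from Lemma~\ref{markov}. Where the paper proves the monotonicity in $y$ by differentiating $\log F_y(t)$ and showing the sign of the resulting $\Delta$ via a double-integral expansion, and proves the truncated-model inequality by first reducing to $y=1$ (using the already-established monotonicity) and then rewriting $F_1(t)$ as an expectation over an auxiliary random variable $\tilde{U}$ built from the measure with $f_s(x)=\int_0^x d\mu$, you dispatch all four claims uniformly through monotone-likelihood-ratio comparisons. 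This buys two things: a single mechanism covers both the parameter-monotonicity and the comparison against the truncated submodel, and the latter no longer needs to piggyback on the former (your MLR bound works for every $y$ directly, not just $y=1$). You also make explicit a fact the paper leaves implicit, namely that the truncated-model inequality uses only the monotonicity of $f_s$, with log-concavity reserved for the parameter-monotonicity. One small citation slip: Lemma~\ref{stodomrn} only characterises stochastic dominance by pointwise comparison of CDFs; the implication ``likelihood ratio nondecreasing $\Rightarrow$ stochastic dominance'' is a separate (standard and elementary) fact that you are invoking, and it would be cleaner to state it rather than attribute it to that lemma. Your handling of the differing supports $[0,y]\subset[0,y']$ by setting the ratio to $+\infty$ on $(y,y']$ is the right way to make the MLR argument airtight.
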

\begin{proof}
Let $d(x)$ be the density of $X_{s}$ in $\mathcal{S}_{\rightarrow X_{s}}$. Then by the definition of the propability density of $\mathcal{S}$, the density of $X_{s}$ in $\mathcal{S}$ conditioned on the value of $Y_{s}$ is $\mathbf{1}_{x\leq y}\frac{d(x)f_{s}(y-x)}{A}$, with $A$ a normalizing constant. Thus the cumulative distribution function $F_{y}(.)$ of $X_{s}$ conditioned on $Y_{s}=y$ is 
$$F_{y}(t)=\frac{\int_{0}^{t\wedge y}d(x)f_{s}(y-x)dx}{\int_{0}^{y}d(x)f_{s}(y-x)dx}.$$
For $t> y$ it is clear that $\frac{\partial}{\partial y}F_{y}(t)=0$, and from now on we only consider $t\leq y$. Since the logarithm function is increasing, it is enough to show that $\frac{\partial}{\partial y}\log(F_{y}(t))\leq 0$. This derivative is equal to
$$\frac{\partial}{\partial y}\log(F_{y}(t))=\frac{\int_{0}^{t}d(x)f_{s}'(y-x)dx}{\int_{0}^{t}d(x)f_{s}(y-x)dx}-\frac{\int_{0}^{y}d(x)f_{s}'(y-x)dx}{\int_{0}^{y}d(x)f_{s}(y-x)dx}-\frac{d(y)f_{s}(0)}{\int_{0}^{y}d(x)f_{s}(y-x)dx}.$$
Since $(-\frac{d(y)f_{s}(0)}{\int_{0}^{y}d(x)f_{s}(y-x)dx})\leq 0$, the non-positivity of the remaining part of the sum suffices. Denote
$$\Delta =\int_{0}^{t}d(x)f_{s}'(y-x)dx\int_{0}^{y}d(x)f_{s}(y-x)dx-\int_{0}^{y}d(x)f_{s}'(y-x)dx\int_{0}^{t}d(x)f_{s}(y-x)dx.$$
Thus we have to show that $\Delta\leq 0$. For $t\leq y$,
\begin{align*}
\Delta=\int_{0}^{t}&d(x)f_{s}'(y-x)dx\left(\int_{0}^{t}d(x)f_{s}(y-x)dx+\int_{t}^{y}d(x)f_{s}(y-x)dx\right)\\
&-\left(\int_{0}^{t}d(x)f_{s}'(y-x)dx+\int_{t}^{y}d(x)f_{s}'(y-x)dx\right)\int_{0}^{t}d(x)f_{s}(y-x)dx\\
=\int_{0}^{t}&d(x)f_{s}'(y-x)dx\int_{t}^{y}d(x)f_{s}(y-x)dx\\
&-\int_{t}^{y}d(x)f_{s}'(y-x)dx\int_{0}^{t}d(x)f_{s}(y-x)dx.\\
\end{align*}
Expressing products of integrals as double integrals yields
\begin{align*}
\Delta=&\int_{\substack{0\leq z_{1}\leq t\\t\leq z_{2}\leq y}}d(z_{1})d(z_{2})f_{s}'(y-z_{1})f_{s}(y-z_{2})dz_{1}dz_{2}\\
&\qquad\qquad\qquad\qquad-\int_{\substack{0\leq z_{1}\leq t\\t\leq z_{2}\leq y}}d(z_{1})d(z_{2})f_{s}(y-z_{1})f_{s}'(y-z_{2})dz_{1}dz_{2}\\
=&\int_{\substack{0\leq z_{1}\leq t\\t\leq z_{2}\leq y}}d(z_{1})d(z_{2})(f_{s}'(y-z_{1})f_{s}(y-z_{2})-f_{s}(y-z_{1})f_{s}'(y-z_{2}))dz_{1}dz_{2}.\\
\end{align*}
Since $d(z_{1})d(z_{2})$ is positive and $\frac{f_{s}'(t)}{f_{s}(t)}$ is decreasing, $\Delta\leq 0$ and the first part of the Proposition is proven.\\
From the first part of the Proposition, it suffices to prove the first inequality of the second part only for $y=1$. Since $f_{s}$ is increasing, there exists a measure $\mu$ on $[0,1]$ such that $f_{s}(x)=\int_{0}^{x}d\mu(u)$. Thus
$$F_{1}(t)=\frac{\int_{0}^{t}d(x)(\int_{0}^{1-x}d\mu(u))dx}{\int_{0}^{1}d(x)(\int_{0}^{1-x}d\mu(u))dx}=\frac{\int_{[0,1]^{2}}\mathbf{1}_{x\leq t,u\leq 1-x}d(x)d\mu(u)dx}{\int_{[0,1]^{2}}\mathbf{1}_{u\leq 1-x}d(x)d\mu(u)dx}.$$
The main point is to express the latter quantity as the expectation of a random variable almost surely greater than $\int_{0}^{t}d(x)dx$. Interverting the integrals yields
\begin{align*}
F_{1}(t)=&\frac{\int_{0}^{1}\left(\int_{0}^{t\wedge (1-u)}d(x)dx\right)d\mu(u)}{\int_{0}^{1}\left(\int_{0}^{1-u}d(x)dx\right)d\mu(u)}
=\frac{\int_{0}^{1}\left(\int_{0}^{t\wedge (1-u)}\frac{d(x)}{\int_{0}^{1-u}d(x)dx}dx\right)(\int_{0}^{1-u}d(x)dx)d\mu(u)}{\int_{0}^{1}\left(\int_{0}^{1-u}d(x)dx\right)d\mu(u)}.
\end{align*}
Let $\tilde{U}$ be a random variable absolutely continuous with respect to $\mu$ and having the density 
$$d_{\tilde{U}}(u)=\frac{\left(\int_{0}^{1-u}d(x)dx\right)d\mu(u)}{\int_{0}^{1}\left(\int_{0}^{1-u}d(x)dx\right)d\mu(u)}.$$
Then 
$$F_{1}(t)=\mathbb{E}_{\tilde{U}}(\frac{\int_{0}^{t\wedge (1-\tilde{U})}d(x)dx}{\int_{0}^{1-\tilde{U}}d(x)dx}).$$
Since for each $u\geq 0$
$$\frac{\int_{0}^{t\wedge 1-u}d(x)dx}{\int_{0}^{1-u}d(x)dx}\geq \int_{0}^{t}d(x)dx,$$
this concludes the proof.\\
It is exactly the same for $F_{Y_{s}\vert X_{s+1}=x}(t)$.
\end{proof}

\subsection{Alternating pattern of a convex sawtooth model}
Proposition \ref{monoton} yields two main features for the model. The first one is an extension of the previous result.
\begin{prop}\label{monoton2}
Let $1\leq s\leq r$, $0\leq t\leq 1$. Then $F_{X_{s}\vert X_{r}=x}(t)$ is decreasing in $x$ and $F_{X_{s}\vert Y_{r}=y}(t)$ is decreasing in $y$. Moreover 
$$F_{X_{s}\vert \mathcal{S}_{\rightarrow X_{r}}}(t)\leq F_{X_{s}\vert Y_{r}=y}(t)$$
and
$$F_{X_{s}\vert \mathcal{S}_{\rightarrow Y_{r}}}(t)\geq F_{X_{s}\vert X_{r+1}=x}(t).$$
\end{prop}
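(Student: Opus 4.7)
The plan is to prove Proposition \ref{monoton2} by induction on $r - s$, using Proposition \ref{monoton} both as the base case and as the main technical input at each step. The driving idea is that any conditioning on a particle one index further away can be rewritten, via the Markov structure of the Gibbs density (Lemma \ref{markov} extends naturally to the mixed $X$/$Y$ variables because the density factors along the chain), as an expectation of a conditional cumulative distribution function one index closer taken against the law of the intermediate neighbour.

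For the monotonicity statements, to pass from $F_{X_s \vert Y_{r-1} = y}(t)$ to $F_{X_s \vert X_r = x}(t)$ I would write
\begin{equation*}
F_{X_s \vert X_r = x}(t) = \int_{0}^{1} F_{X_s \vert Y_{r-1} = y}(t)\, d_{Y_{r-1} \vert X_r = x}(y)\, dy,
\end{equation*}
the Markov property allowing us to drop the conditioning on $X_r$ once $Y_{r-1}$ is fixed. By the induction hypothesis the integrand is non-increasing in $y$, while Proposition \ref{monoton} ensures that $Y_{r-1} \vert X_r = x$ is stochastically non-decreasing in $x$. Combining these facts through the characterization of stochastic dominance given by Proposition \ref{definitionStochastDom} shows that the integral is non-increasing in $x$. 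The analogous pivoting handles $F_{X_s \vert Y_r = y}(t)$ by integrating over $X_r$ instead: writing $F_{X_s \vert Y_r = y}(t) = \int F_{X_s \vert X_r = x}(t)\, d_{X_r \vert Y_r = y}(x)\, dx$, one uses the monotonicity in $x$ just obtained together with the fact, provided again by Proposition \ref{monoton}, that $X_r \vert Y_r = y$ is stochastically non-decreasing in $y$.

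The two inequalities proceed from the same mechanism. For $F_{X_s \vert \mathcal{S}_{\rightarrow X_r}}(t) \leq F_{X_s \vert Y_r = y}(t)$, the Markov property ensures that the law of $X_s$ conditioned on $X_r = x$ is the same in the full model and in $\mathcal{S}_{\rightarrow X_r}$, so both sides of the inequality can be expressed as the expectation of the same function $x \mapsto F_{X_s \vert X_r = x}(t)$ against two different laws of $X_r$. The first inequality of Proposition \ref{monoton}, applied with $s$ replaced by $r$, asserts that $X_r \vert Y_r = y$ is stochastically dominated by $X_r$ in $\mathcal{S}_{\rightarrow X_r}$; since $F_{X_s \vert X_r = x}(t)$ is non-increasing in $x$, the expectation is larger under the stochastically smaller law, yielding the claim. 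The mirror inequality $F_{X_s \vert \mathcal{S}_{\rightarrow Y_r}}(t) \geq F_{X_s \vert X_{r+1} = x}(t)$ follows identically by pivoting through $Y_r$ in place of $X_r$.

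I expect the main obstacle to be purely organizational: one must interleave the two monotonicity statements so that each feeds the next step, and one must check carefully that the Markov factorization persists after an auxiliary coordinate is integrated out, especially when one conditioning is a point condition on a single variable and the other is a whole sub-model. Once the Markov property is granted in its natural mixed form for $(X_i, Y_j)$, the rest is a single induction on $r - s$ in which Proposition \ref{monoton} supplies both the stochastic monotonicity of the intermediate variable and the stochastic comparison between the full and the truncated models.
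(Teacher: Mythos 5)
Your proposal is correct and follows essentially the same route as the paper: an induction along the chain with Proposition \ref{monoton} as the base case and at each step a decomposition of the conditional c.d.f.\ through the neighbouring intermediate particle, relying on the Markov structure to identify the conditional law of $X_s$ given the intermediate variable with its counterpart in a truncated model. The only cosmetic divergence is that the paper propagates the monotonicity by an explicit integration by parts (expressing $\frac{\partial}{\partial x}F_{X_s\vert X_{r+1}=x}(t)$ as an integral of a product of two nonpositive derivatives), whereas you invoke the stochastic-dominance characterization of Proposition \ref{definitionStochastDom} directly; these are two dressings of the same comparison of a monotone integrand against stochastically ordered laws, and the key inputs (Lemma \ref{markov} extended to mixed $X$/$Y$ conditioning, and both monotonicity statements of Proposition \ref{monoton}) are used identically in both.
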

\begin{proof}
Let $s\geq 1$ and let us prove the monotonicty by recurrence on $r$, starting at $s=r$. $F_{X_{s}\vert X_{s}=x}(t)$ is clearly decreasing in $x$ and from Proposition \ref{monoton}, $F_{X_{s}\vert Y_{s}=y}(t)$ is decreasing in $y$. Thus the initialization is done.\\
Suppose the result proved until $X_{r}$. Then 
$$F_{X_{s}\vert X_{r+1}=x}(t)=\int_{0}^{1} F_{X_{s}\vert Y_{r}=y,X_{r+1}=x}(t)d_{Y_{r}\vert X_{r+1}=x}(y)dy,$$
and by an integration by part, since from Lemma \ref{markov} $ F_{X_{s}\vert Y_{r}=y,X_{r+1}=x}(t)=F_{X_{s}\vert Y_{r}=y}(t)$,
$$F_{X_{s}\vert X_{r+1}=x}(t)=F_{X_{s}\vert Y_{r}=1}(t)-\int_{0}^{1} \frac{\partial}{\partial y}F_{X_{s}\vert Y_{r}=y}(t)F_{Y_{r}\vert X_{r+1}=x}(y)dy.$$
Thus 
$$\frac{\partial}{\partial x}F_{X_{s}\vert X_{r+1}=x}(t)=-\int_{0}^{1} \frac{\partial}{\partial y}F_{X_{s}\vert Y_{r}=y}(t)\frac{\partial}{\partial x }F_{Y_{r}\vert X_{r+1}=x}(y)dy.$$
By recurrence $\frac{\partial}{\partial y}F_{X_{s}\vert Y_{r}=y}(t)$ is negative and by Proposition \ref{monoton} $\frac{\partial}{\partial x }F_{Y_{r}\vert X_{r+1}=x}(x)$ is negative, thus $\frac{\partial}{\partial x}F_{X_{s}\vert X_{r+1}=x}(t)$ is also negative. It is exactly the same for $F_{X_{s}\vert Y_{r+1}=y}(t)$.\\
Let us prove the second part of the proposition and let $y\in [0,1]$. Conditioning $X_{s}$ on $X_{r}$ in $\mathcal{S}_{\rightarrow X_{r}}$ yields 
$$F_{X_{s}\vert \mathcal{S}_{\rightarrow X_{r}}}(t)=\mathbb{E}(F_{X_{s}\vert X_{r}=\tilde{X}_{r}}(t)),$$
with $\tilde{X}_{r}$ following the law of $q_{r}$ in $\mathcal{S}_{\rightarrow X_{r}}$.\\
On one hand from the first part of the proposition, $F_{X_{s}\vert X_{r}=x}(t)$ is decreasing in $x$. On the other hand from Proposition \ref{monoton}, $\tilde{X}_{r}$ stochastically dominates $(X_{r}\vert Y_{r}=y)$. Thus from Proposition \ref{definitionStochastDom}, 
$$F_{X_{s}\vert \mathcal{S}_{\rightarrow X_{r}}}(t)=\mathbb{E}(F_{X_{s}\vert X_{r}=\tilde{X}_{r}}(t))\leq F_{X_{s}\vert Y_{r}=y}(t).$$
The same pattern proves the second inequality.
\end{proof}
There is an immediate consequence of this Proposition on the behavior of $F_{X_{s}\vert \mathcal{S}_{\rightarrow X_{n}}}(t)$ with $n\geq s$.
\begin{cor}
The following inequalities hold for $n\geq s$:
$$F_{X_{s}\vert \mathcal{S}_{\rightarrow X_{s}}}(t)\leq \dots\leq F_{X_{s}\vert \mathcal{S}_{\rightarrow X_{n}}}(t)\leq\dots\leq F_{X_{s}\vert \mathcal{S}_{\rightarrow Y_{n}}}(t)\dots \leq F_{X_{s}\vert \mathcal{S}_{\rightarrow Y_{s}}}(t).$$
\end{cor}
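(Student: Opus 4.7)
I would split the chain into three blocks and derive each from Proposition \ref{monoton2} combined with the tower property applied at a single intermediate particle. The three blocks are the ascending $X$-portion $F_{X_{s}\vert \mathcal{S}_{\rightarrow X_{r}}}(t)\leq F_{X_{s}\vert \mathcal{S}_{\rightarrow X_{r+1}}}(t)$ for $s\leq r<n$, the middle link $F_{X_{s}\vert \mathcal{S}_{\rightarrow X_{n}}}(t)\leq F_{X_{s}\vert \mathcal{S}_{\rightarrow Y_{n}}}(t)$, and the ``descending'' $Y$-portion $F_{X_{s}\vert \mathcal{S}_{\rightarrow Y_{r+1}}}(t)\leq F_{X_{s}\vert \mathcal{S}_{\rightarrow Y_{r}}}(t)$ for $s\leq r<n$.

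The middle step is essentially immediate: Proposition \ref{monoton2} yields $F_{X_{s}\vert \mathcal{S}_{\rightarrow X_{n}}}(t)\leq F_{X_{s}\vert Y_{n}=y}(t)$ pointwise in $y$, and integrating against the marginal law of $Y_{n}$ inside $\mathcal{S}_{\rightarrow Y_{n}}$ produces exactly $F_{X_{s}\vert \mathcal{S}_{\rightarrow Y_{n}}}(t)$ on the right.

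For the ascending $X$-portion, I would first observe that Lemma \ref{markov} extends word for word to a conditioning on a $Y$-variable, because its proof uses only the product form of the density and the $Y_{r}$ separates the two halves of the product. Applied at $Y_{r}$ inside $\mathcal{S}_{\rightarrow X_{r+1}}$, this gives
$$F_{X_{s}\vert \mathcal{S}_{\rightarrow X_{r+1}}}(t)=\int_{0}^{1} F_{X_{s}\vert Y_{r}=y}(t)\,\rho_{r}(y)\,dy,$$
where $\rho_{r}$ denotes the marginal density of $Y_{r}$ in $\mathcal{S}_{\rightarrow X_{r+1}}$. Since Proposition \ref{monoton2} gives $F_{X_{s}\vert Y_{r}=y}(t)\geq F_{X_{s}\vert \mathcal{S}_{\rightarrow X_{r}}}(t)$ for every $y$, pulling the (constant-in-$y$) lower bound out of the integral finishes this block. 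The descending $Y$-portion is strictly symmetric: condition at $X_{r+1}$ inside $\mathcal{S}_{\rightarrow Y_{r+1}}$ to write
$$F_{X_{s}\vert \mathcal{S}_{\rightarrow Y_{r+1}}}(t)=\int_{0}^{1} F_{X_{s}\vert X_{r+1}=x}(t)\,\tilde{\rho}_{r}(x)\,dx,$$
with $\tilde{\rho}_{r}$ the marginal density of $X_{r+1}$ in $\mathcal{S}_{\rightarrow Y_{r+1}}$, and apply $F_{X_{s}\vert X_{r+1}=x}(t)\leq F_{X_{s}\vert \mathcal{S}_{\rightarrow Y_{r}}}(t)$ (second inequality of Proposition \ref{monoton2}) pointwise in $x$.

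\textbf{Expected obstacle.} There is no substantial obstacle: the corollary is a cosmetic rearrangement of the two inequalities of Proposition \ref{monoton2} via conditional expectation. The only point worth one line of verification is the Markov property at a $Y$-variable, which follows directly from the factorized shape of the density $\frac{1}{\mathcal{V}}\prod \mathbf{1}_{x_{i}\leq y_{i}\geq x_{i+1}}f_{i}(y_{i}-x_{i})g_{i}(y_{i}-x_{i+1})$: fixing any single $Y_{r}$ splits the product into two pieces that share no further variable.
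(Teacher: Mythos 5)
Your proposal is correct and follows essentially the same route as the paper: the paper proves $F_{X_{s}\vert \mathcal{S}_{\rightarrow X_{r}}}(t)\leq F_{X_{s}\vert \mathcal{S}_{\rightarrow Y_{r}}}(t)$ by passing through $F_{X_{s}\vert Y_{r}=1}(t)$, and then establishes the monotone $X$-chain by the same tower-property argument you write (conditioning on $Y_{n}$ inside $\mathcal{S}_{\rightarrow X_{n+1}}$ and pulling out the constant lower bound from Proposition \ref{monoton2}), concluding the $Y$-chain ``by symmetry.'' Your explicit statement of the Markov property at a $Y$-variable is a fair observation (Lemma \ref{markov} is phrased only for $X$-conditioning), though the paper already uses exactly this extension inside the proof of Proposition \ref{monoton2}, so it is not a gap so much as an implicit convention you rightly surfaced.
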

\begin{proof}
The previous Proposition yields directly the following inequalities :
$$F_{X_{s}\vert \mathcal{S}_{\rightarrow Y_{r}}}(t)\geq F_{X_{s}\vert Y_{r}=1}\geq F_{X_{s}\vert\mathcal{S}_{\rightarrow X_{r}}}(t).$$
Moreover 
\begin{align*}
F_{X_{s}\vert \mathcal{S}_{\rightarrow X_{n+1}}}(t)=&\int_{[0,1]}F_{X_{s}\vert  Y_{n}=y}(t)d_{Y_{n}\vert \mathcal{S}_{\rightarrow X_{n+1}}}(y)dy\\
\geq &\int_{[0,1]}F_{X_{s}\vert \mathcal{S}_{\rightarrow X_{n}}}(t)d_{Y_{n}\vert \mathcal{S}_{\rightarrow X_{n+1}}}(y)dy\\
\geq & F_{X_{s}\vert \mathcal{S}_{\rightarrow X_{n}}}(t),
\end{align*}
the first inequality being due to Proposition \ref{monoton}. By symmetry between $X_{n}$ and $Y_{n}$ the general result holds.
\end{proof}

\subsection{Estimates on the behavior of extreme particles}
As a second consequence of Proposition \ref{monoton} we can get a more accurate estimate on the behavior of the first and last particles of $\mathcal{S}$. In particular we can achieve a coupling of $(X_{I},X_{F})$ with two couples of random variables, which only depend on $f_{1}$ and $g_{n}$ and give some bounds on $(X_{I},X_{F})$ in the sense of the stochastic domination.\\
In this paragraph we will not assume that the first and last particles are lower ones, and deal with model of any type (refer to Remark \ref{typeModel} for the definition of the type of a model).
Moreover to describe the bounding random variables we introduce two particular transforms $\Gamma^{+}$ and $\Gamma^{-}$:
\begin{defn}
Let $f$ be a positive function on $[0,1]$. Then $\Gamma^{+}(f)$ and $\Gamma^{-}(f)$ are the functions defined on $[0,1]$ as :
$$\Gamma^{-}(f)(t)=\frac{\int_{1-t}^{1}f(u)du}{\int_{0}^{1}f(u)du},$$
and
$$\Gamma^{+}(f)(t)=\frac{\int_{0}^{t}f(u)du}{\int_{0}^{1}f(u)du}.$$
\end{defn}
Remark that $\Gamma^{-}(f)(t)$ (resp. $\Gamma^{+}(f)(t)$) is the cumulative distribution function of the random variable $1-Z$ (resp. $Z$), $Z$ being the random variable with density $\frac{f(x)}{\int_{0}^{1}f(x)dx}$.
\begin{prop}\label{domNNRPinit}
Let $\mathcal{S}$ be a convex Sawtooth model of type $\epsilon$ with density functions $\lbrace f_{i},g_{i}\rbrace_{1\leq i\leq n}$ and at least four particles. There exists a probability space and two couples of random variables $(X_{+},Y_{+}),(X_{-},Y_{-})$ on it, such that :
\begin{itemize}
\item $(X_{-},Y_{-})\preceq_{\epsilon} (X_{I},X_{F})\preceq_{\epsilon} (X_{+},Y_{+}).$
\item $X_{+}$ and $Y_{+}$ are independent with distribution function 
$$F_{X_{+},Y_{+}}(s,t)=\Gamma^{\epsilon_{1}}(f_{1})(s)\Gamma^{\epsilon_{2}}(g_{n})(t).$$
\item $X_{-}$ and $Y_{-}$ are independent with distribution function 
$$F_{X_{-},Y_{-}}(s,t)=\left(\Gamma^{\epsilon_{1}}\circ \Gamma^{\epsilon_{1}^{*}}(f_{1})\right)(s)\left(\Gamma^{\epsilon_{2}}\circ\Gamma^{\epsilon_{2}^{*}}(g_{n})\right)(t).$$
\end{itemize}
with $-^{*}=+$ and $+^{*}=-$.
\end{prop}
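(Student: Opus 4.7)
The plan is to treat the type $\epsilon=--$ (so $X_I=X_1$ and $X_F=X_{n+1}$) and to observe that the three remaining types follow by the same template after swapping the roles of $X$ and $Y$ at the relevant endpoint and replacing $\Gamma^-$ by $\Gamma^+$. The proof splits cleanly into an upper-bound step and a lower-bound step; the final couplings come from Proposition \ref{definitionStochastDom} together with Lemmas \ref{stodomrn} and \ref{stochasticDominationCondition}.

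\textbf{Upper bound.} By Lemma \ref{markov}, conditioning on $(Y_1,Y_n)$ makes $X_I$ and $X_F$ independent, with $X_I$ depending only on $Y_1$ and $X_F$ only on $Y_n$, hence
\[
F_{X_I,X_F}(s,t)=\mathbb{E}\bigl[F_{X_I\mid Y_1}(s)\,F_{X_F\mid Y_n}(t)\bigr].
\]
Proposition \ref{monoton} says $F_{X_I\mid Y_1=y}(s)$ is decreasing in $y$, so its minimum on $[0,1]$ is attained at $y=1$; a one-line computation identifies this minimum with $\Gamma^-(f_1)(s)=F_{X_+}(s)$, and symmetrically $F_{X_F\mid Y_n=y}(t)\geq \Gamma^-(g_n)(t)=F_{Y_+}(t)$. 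Multiplying these pointwise bounds (both factors are nonnegative) and taking expectation yields $F_{X_I,X_F}\geq F_{X_+}\cdot F_{Y_+}=F_{X_+,Y_+}$, which by Lemma \ref{stodomrn} produces the desired coupling.

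\textbf{Lower bound.} A Fubini-type rewriting (interchanging the $u$- and $x$-integrals) identifies $\Gamma^-\circ\Gamma^+(f_1)$ with the marginal of $X_1$ in the two-particle sub-model $\mathcal{S}_{\to Y_1}$, and similarly $\Gamma^-\circ\Gamma^+(g_n)$ with the marginal of $X_{n+1}$ in $\mathcal{S}_{Y_n\to}$; the corollary to Proposition \ref{monoton2} then supplies the marginal inequalities $F_{X_I}\leq F_{X_-}$ and $F_{X_F}\leq F_{Y_-}$. To promote them to a joint coupling I apply Lemma \ref{stochasticDominationCondition} with $(U_1,V_1)=(X_F,X_I)$ and the independent pair $(U_2,V_2)=(Y_-,X_-)$. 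The only nontrivial hypothesis is $F_{X_F\mid X_I=v}(t)\leq F_{Y_-}(t)$ for every $v\in[0,1]$. Proposition \ref{monoton2} (applied in the direction $s>r$, which is a symmetric argument to the proof there) shows that $F_{X_F\mid X_I=v}(t)$ is decreasing in $v$, so it suffices to check this at $v=0$. At $v=0$ the conditional system on $(Y_1,X_2,\dots,X_F)$ is a sawtooth-type model whose initial particle $Y_1$ now carries a prior density proportional to $f_1(y)$; applying the right-end analog of the corollary to Proposition \ref{monoton2} inside this induced model gives exactly $F_{X_F\mid X_I=0}(t)\leq F_{X_F\mid \mathcal{S}_{Y_n\to}}(t)=F_{Y_-}(t)$.

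\textbf{Main obstacle.} The delicate step is the last one: invoking the corollary to Proposition \ref{monoton2} inside the model induced by conditioning on a boundary value. The proofs of Propositions \ref{monoton} and \ref{monoton2} are set up for a sawtooth system whose first particle has no prescribed prior, but a careful reading suggests that the density called $d(x)$ in the proof of Proposition \ref{monoton} is treated as an arbitrary nonnegative density and that only the log-concavity of the interaction densities $f_i,g_i$ is truly used. Establishing this extension to models with a prescribed boundary density is the technical crux of the argument; once it is in hand, the assembly via Lemmas \ref{stodomrn} and \ref{stochasticDominationCondition} is routine, and the three remaining types $\epsilon$ follow by the same pattern with $\Gamma^-$ replaced by $\Gamma^+$ at the appropriate endpoint.
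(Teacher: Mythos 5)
Your upper bound is the same as the paper's: condition on $(Y_1,Y_n)$, use the Markov structure to factorize, then apply Proposition~\ref{monoton} at $y=1$ to identify the pointwise minimum with $\Gamma^-(f_1)(s)\Gamma^-(g_n)(t)$. That part is fine, and your identification of $\Gamma^-\circ\Gamma^+(f_1)$ as the $X_1$-marginal of the two-particle sub-model $\mathcal{S}_{\to Y_1}$ (via Fubini) is also correct and is a nice way to understand where this kernel comes from; the paper instead re-derives it by an explicit primitive calculation, so this part of your argument is a genuine, slightly more conceptual, variant.

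The real problem is in the lower bound, and you have diagnosed it yourself: your chain of reasoning forces you to bound $F_{X_F\mid X_I=0}$, and at that point you invoke ``the right-end analog of the corollary to Proposition~\ref{monoton2} inside the induced model'' — i.e., a version of the monotonicity machinery for a sawtooth chain carrying a fixed boundary prior on $Y_1$. Nothing in the paper establishes that extension. Propositions~\ref{monoton}, \ref{monoton2} and their corollary are all stated and proved for genuine Sawtooth models, and while you are probably right that the argument in Proposition~\ref{monoton} only uses that $d(x)$ is some nonnegative density, promoting ``probably'' to ``proved'' is exactly the kind of technical work that cannot be hand-waved, and it is not needed.

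You can close the gap without any boundary-prior extension by using the Markov property earlier, which is exactly what the paper does and is the reason for the hypothesis that $\mathcal{S}$ has at least four particles. Take $s=r=1$ in the second inequality of Proposition~\ref{monoton2}: this gives $F_{X_I\mid X_2=x_2}(t)\leq F_{X_I\mid \mathcal{S}_{\to Y_1}}(t)=\Gamma^-\circ\Gamma^+(f_1)(t)=F_{X_-}(t)$, uniformly in $x_2$, with no reference to $X_F$. Since there are at least four particles, $X_2\neq X_F$ and Lemma~\ref{markov} gives $F_{X_I\mid X_2=x_2, X_F=y}=F_{X_I\mid X_2=x_2}$; averaging over $X_2$ conditionally on $X_F=y$ then yields $F_{X_I\mid X_F=y}(t)\leq F_{X_-}(t)$ for every $y$, and by symmetry $F_{X_F}(t)\leq F_{Y_-}(t)$. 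These are precisely the two hypotheses of Lemma~\ref{stochasticDominationCondition} (with $(U_1,V_1)=(X_I,X_F)$, $(U_2,V_2)=(X_-,Y_-)$), which produces the coupling. In short: condition on the \emph{near} particle $X_2$ (or $X_n$) where Proposition~\ref{monoton2} applies directly, and let Lemma~\ref{markov} carry the estimate to the far endpoint; do not condition the far endpoint on $X_I=0$, as that creates the boundary-prior model your current argument cannot handle.
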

\begin{proof}
We assume without loss of generality that each $f_{i},g_{i}$ is renormalized and, since the type of the Sawtooth model doesn't change the pattern of the proof, we assume that $\mathcal{S}$ is of type $--$.\\
On one hand the conditional law of $(X_{I},X_{F})$ given the value of $Y_{1}=y_{1},Y_{n}=y_{n}$ has for cumulative distribution function :
\begin{align*}
F_{X_{I},X_{F}\vert Y_{1}=y_{1},Y_{n}=y_{n}}(t_{1},t_{2})=&\frac{(\int_{0}^{t_{1}\wedge y_{1}}f_{1}(y_{1}-x)dx)(\int_{0}^{t_{2}\wedge y_{n}}g_{n}(y_{n}-y)dy)}{(\int_{0}^{y_{1}}f_{1}(x)dx)(\int_{0}^{y_{n}}g_{n}(x)dx)}\\
=&F_{X_{I}\vert Y_{1}=y_{1}}(t)F_{X_{F}\vert Y_{n}=y_{n}}(t).\\
\end{align*}
This together with Proposition \ref{monoton} gives the bound
\begin{align*}
F_{X_{1},X_{n+1}\vert Y_{1}=y_{1},Y_{n}=y_{n}}(t_{1},t_{2})=&F_{X_{I}\vert Y_{1}=y_{1}}(t)F_{X_{F}\vert Y_{n}=y_{n}}(t)\\
\geq& F_{X_{I}\vert Y_{1}=1}(t)F_{X_{F}\vert Y_{n}=1}(t).
\end{align*}
Since 
$$F_{X_{I}\vert Y_{1}=1}(s)F_{X_{F}\vert Y_{n}=1}(t)=(1-F_{f_{1}}(1-s))(1-F_{g_{n}}(1-t_{2}))=\Gamma^{-}(f_{1})(s)\Gamma^{-}(g_{n})(t),$$
this gives the upper part of the stochastic bound.\\
On the other hand, the density of $(Y_{1},Y_{n})$ conditioned on the value of $(X_{2},X_{n})$ is 
\begin{align*}
&d_{Y_{1},Y_{n}\vert X_{2}=x_{2},X_{n}=x_{n}}(y_{1},y_{n})\\
=&\mathbf{1}_{y_{1}\geq x_{2},y_{n}\geq x_{n}}\frac{(\int_{0}^{y_{1}}f_{1}(y_{1}-x)dx)g_{1}(y_{1}-x_{2})}{\int_{x_{2}}^{1}(\int_{0}^{z}f_{1}(z-x)dx)g_{1}(z-x_{2})dz}\frac{(\int_{0}^{y_{n}}g_{n}(y_{n}-x)dx)f_{n}(y_{n}-x_{n})}{\int_{x_{n}}^{1}(\int_{0}^{z}g_{n}(z-x)dx)f_{n}(z-x_{n})dz}\\
=&\mathbf{1}_{y_{1}\geq x_{2},y_{n}\geq x_{n}}\frac{F_{f_{1}}(y_{1})g_{1}(y_{1}-x_{2})}{\int_{x_{2}}^{1}F_{f_{1}}(z)g_{1}(z-x_{2})dz}\frac{F_{g_{n}}(y_{n})f_{n}(y_{2}-x_{n})}{\int_{x_{n}}^{1}F_{g_{n}}(z)f_{n}(z-x_{n})dz}.\\
\end{align*}
Factorizing the latter density yields
$$d_{Y_{1},Y_{n}\vert X_{2}=x_{2},X_{n}=x_{n}}(y_{1},y_{n})=d_{Y_{1}\vert X_{2}=x_{2}}(y_{1})d_{Y_{n}\vert X_{n}=x_{n}}(y_{n}).$$
Let us first consider $Y_{1}$. Recall that $g_{1}$ is an increasing $\mathcal{C}^{1}$ function. This means in particular that 
$$g_{1}(x)=\frac{1}{K}\int_{0}^{x}d\lambda(u),$$
with $\lambda$ a probability measure on $[0,1]$ having eventually a dirac mass at $0$ and then a continuous density function on $]0,1]$. Thus the density of $Y_{1}$ conditioned on the value of $X_{2}$ is 
$$d_{Y_{1}\vert X_{2}=x_{2}}(y_{1})=\frac{1}{A}\mathbf{1}_{y_{1}\geq x_{2}}F_{f_{1}}(y_{1})\int_{x_{2}}^{y_{1}}d\lambda(u-x_{2}),$$
with $A$ a normalizing constant. Let $d_{u}$ be the density function defined for $0\leq u\leq 1$ by 
$$d_{u}(y)=\frac{1}{A_{u}}\mathbf{1}_{y\geq u}F_{f_{1}}(y_{1}),$$
with $A_{u}$ a normalizing constant depending on $u$ and let $F_{u}(t)$ be the associated cumulative distribution function. On one hand 
\begin{align*}
F_{Y_{1}\vert X_{2}=x_{2}}(t)=&\frac{\int_{0}^{t}\mathbf{1}_{y_{1}\geq x_{2}}F_{f_{1}}(y_{1})\int_{x_{2}}^{y_{1}}d\lambda(u-x_{2})dy_{1}}{\int_{0}^{1}\mathbf{1}_{y_{1}\geq x_{2}}F_{f_{1}}(y_{1})\int_{x_{2}}^{y_{1}}d\lambda(u-x_{2})dy_{1}}\\
=&\frac{\int_{0}^{t}\int_{x_{2}}^{1}\mathbf{1}_{y_{1}\geq u}F_{f_{1}}(y_{1})d\lambda(u-x_{2})dy_{1}}{\int_{0}^{1}\int_{x_{2}}^{1}\mathbf{1}_{y_{1}\geq u}F_{f_{1}}(y_{1})d\lambda(u-x_{2})dy_{1}},
\end{align*}
and after interverting the integrals, since $F_{u}(1)=1$,
\begin{align*}
F_{Y_{1}\vert X_{2}=x_{2}}(t)=&\frac{\int_{x_{2}}^{1}(\int_{0}^{t}\mathbf{1}_{y_{1}\geq u}F_{f_{1}}(y_{1})dy_{1})d\lambda(u-x_{2})}{\int_{x_{2}}^{1}(\int_{0}^{1}\mathbf{1}_{y_{1}\geq u}F_{f_{1}}(y_{1})dy_{1})d\lambda(u-x_{2})}\\
=&\frac{\int_{x_{2}}^{1}A_{u}F_{u}(t)d\lambda(u-x_{2})}{\int_{x_{2}}^{1}A_{u}d\lambda(u-x_{2})}\\
=&\mathbb{E}_{\tilde{U}}(F_{\tilde{U}}(t)),
\end{align*}
with $\tilde{U}$ a random variable with law $d\tilde{U}(u)=\mathbf{1}_{u\geq x_{2}}\frac{A_{u}d\lambda(u-x_{2})}{\int_{x_{2}}^{1}A_{u}d\lambda(u-x_{2)}}$.\\
On the other hand
$$F_{u}(t)=\mathbf{1}_{t\geq u}\frac{\int_{u}^{t}F_{f_{1}}(u)du }{\int_{u}^{1}F_{f_{1}}(u)du}=\mathbf{1}_{t\geq u}\frac{\mathcal{F}_{f_{1}}(t)-\mathcal{F}_{f_{1}}(u)}{\mathcal{F}_{f_{1}}(1)-\mathcal{F}_{f_{1}}(u)},$$
with $\mathcal{F}_{f_{1}}$ being the primitive of $F_{f_{1}}$ taking the value $0$ at $0$. This yields
\begin{align*}
\frac{\partial}{\partial u} F_{u}(t)=&\frac{\partial}{\partial u}(\mathbf{1}_{u\leq t}\frac{\mathcal{F}_{f_{1}}(t)-\mathcal{F}_{f_{1}}(u)}{\mathcal{F}_{f_{1}}(1)-\mathcal{F}_{f_{1}}(u)})\\
=&\mathbf{1}_{u\leq t}\frac{\partial}{\partial u}((\mathcal{F}_{f_{1}}(t)-\mathcal{F}_{f_{1}}(1))\frac{1}{\mathcal{F}_{f_{1}}(1)-\mathcal{F}_{f_{1}}(u)}+1)\\
=&\mathbf{1}_{u\leq t}(\mathcal{F}_{f_{1}}(t)-\mathcal{F}_{f_{1}}(1))\frac{\partial}{\partial u}(\frac{1}{\mathcal{F}_{f_{1}}(1)-\mathcal{F}_{f_{1}}(u)})\\
=&\mathbf{1}_{u\leq t}(\mathcal{F}_{f_{1}}(t)-\mathcal{F}_{f_{1}}(1))\frac{F_{f_{1}}(u)}{(\mathcal{F}_{f_{1}}(1)-\mathcal{F}_{f_{1}}(u))^{2}}\leq 0,\\
\end{align*}
and thus
$$ F_{u}(t)\leq  F_{0}(t)=\frac{\mathcal{F}_{f_{1}}(t)}{\mathcal{F}_{f_{1}}(1)}.$$
Integrating with respect to $\tilde{U}$ yields  
$$F_{Y_{1}\vert X_{2}=x_{2}}(t)=\mathbb{E}_{\tilde{U}}(F_{\tilde{U}}(t))\leq\mathbb{E}_{\tilde{U}}(F_{0}(t)),$$
and finally $F_{Y_{1}\vert X_{2}=x_{2}}(t)\leq \frac{\mathcal{F}_{f_{1}}(t)}{\mathcal{F}_{f_{1}}(1)}$.
We can now integrate this inequality to get a bound on the cumulative distribution function of $X_{I}$ conditionned on $X_{2}$ :
\begin{align*}
F_{X_{I}\vert X_{2}=x_{2}}(t)=&\int_{0}^{1} F_{X_{I}\vert Y_{1}=y}(t)d_{Y_{1}\vert X_{2}=x_{2}}(y)dy\\
=&F_{X_{I}\vert Y_{1}=1}(t)-\int_{0}^{1}\frac{\partial}{\partial y}F_{X_{I}\vert Y_{1}=y}(t)F_{Y_{1}\vert X_{2}=x_{2}}(y)dy\\
\leq &F_{X_{I}\vert Y_{1}=1}(t)-\int_{0}^{1}\frac{\partial}{\partial y}F_{X_{I}\vert Y_{1}=y}(t)\frac{\mathcal{F}_{f_{1}}(y)}{\mathcal{F}_{f_{1}(1)}}dy\\
\leq &\int_{0}^{1}F_{X_{I}\vert Y_{1}=y}(t)\frac{F_{f_{1}}(y)}{\mathcal{F}_{f_{1}(1)}}dy.
\end{align*}
Note that the sense of the inequality on the third line is due to the negative sign of $\frac{\partial}{\partial y}F_{X_{I}\vert Y_{1}=y}(t)$. Since 
\begin{align*}
\int_{0}^{1}F_{X_{I}\vert Y_{1}=y}(t)\frac{F_{f_{1}}(y)}{\mathcal{F}_{f_{1}(1)}}dy=&\int_{0}^{1}\frac{\int_{0}^{t\wedge y}f_{1}(y-u)du}{F_{f_{1}}(y)}\frac{F_{f_{1}}(y)}{\mathcal{F}_{f_{1}}(1)}dy\\
=&\int_{0}^{t}\int_{u}^{1}\frac{f_{1}(y-u)}{\mathcal{F}_{f_{1}}(1)}dydu\\
=&\frac{\int_{0}^{t}F_{f_{1}}(1-u)du}{\mathcal{F}_{f_{1}}(1)}=\Gamma^{-}(F_{f_{1}})(t),\\
\end{align*}
this yields the inequality
$$F_{X_{I}\vert X_{2}=x_{2}}(t)\leq\Gamma^{-}\circ\Gamma^{+}(f_{1})(t).$$
Note that the latter inequality is valid even if the model has only three particles (see the next Corollary).
Finally since in our case there are at least four particles, $X_{F}\not =X_{2}$, and thus $F_{X_{I}\vert X_{2}=x_{2},X_{F}=y}(t)=F_{X_{1}\vert X_{2}=x_{2}}(t)$. Therefore
$$F_{X_{I}\vert X_{F}=y}(t)\leq \Gamma^{-}\circ\Gamma^{+}(f_{1})(t),$$
and by averaging on $y$,
$$F_{X_{I}}(t)\leq \Gamma^{-}\circ\Gamma^{+}(f_{1})(t).$$
Doing the same with $X_{F}$ gives the bound :
$$F_{X_{F}}(t)\leq\Gamma^{-}\circ\Gamma^{+}(g_{n})(t).$$
The result follows from Lemma \ref{stochasticDominationCondition}.
\end{proof}
In particular as a corollary of the latter proposition (and as a corollary of the proof in the case $n=2$), the following result holds :
\begin{cor}\label{domNNRP}
Let $\mathcal{S}$ be a convex Sawtooth model of type $\epsilon$ with density functions  $\lbrace f_{i},g_{i}\rbrace_{1\leq i\leq n}$ .  There exists a couple of random variables $(Z^{(1)},Z^{(2)})$ such that for $y\in[0,1]$,
\begin{itemize}
\item $Z^{(1)}\preceq_{\epsilon(1)} (X_{I}\vert X_{F}=y)\preceq_{\epsilon(1)} Z^{(2)},$
\item The cumulative distribution function of $Z^{(2)}$ is :
$$F_{Z^{(2)}}(t)=\Gamma^{\epsilon(1)}(f_{1})(t).$$
\item The cumulative distribution function of $Z^{(1)}$ is
$$F_{Z^{(1)}}(t)=\Gamma^{\epsilon(1)}\circ \Gamma^{\epsilon(1)^{*}}(f_{1})(t).$$
\end{itemize}
\end{cor}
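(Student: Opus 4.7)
The plan is to bootstrap the calculations already carried out in the proof of Proposition~\ref{domNNRPinit}, which produced bounds on the joint law of $(X_I,X_F)$, into bounds on the conditional marginal law of $X_I$ given $X_F=y$. The key observation is that both inequalities needed here actually appeared at intermediate stages of that earlier proof with the \emph{right} degree of uniformity; one just has to re-integrate them against a different conditional law. I will write the argument for type $\epsilon(1)=-$; the other sign is completely symmetric. It is enough to exhibit two one-dimensional laws with cumulative distribution functions $\Gamma^{-}(f_{1})$ and $\Gamma^{-}\circ\Gamma^{+}(f_{1})$ that stochastically bound $(X_{I}\vert X_{F}=y)$, since one may then take $Z^{(1)}$ and $Z^{(2)}$ to be any random variables on an auxiliary probability space with those laws.

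For the bound involving $Z^{(2)}$ (with cumulative distribution function $\Gamma^{-}(f_{1})$), I would apply the Markov property (Lemma~\ref{markov}) to factor $F_{X_{I}\vert Y_{1}=y_{1},X_{F}=y}(t)=F_{X_{I}\vert Y_{1}=y_{1}}(t)$, and then integrate against the conditional law of $Y_{1}$ given $X_{F}=y$. Proposition~\ref{monoton} yields the pointwise monotonicity $F_{X_{I}\vert Y_{1}=y_{1}}(t)\geq F_{X_{I}\vert Y_{1}=1}(t)=\Gamma^{-}(f_{1})(t)$, uniformly in $y_{1}$; integrating preserves the bound and produces $F_{X_{I}\vert X_{F}=y}(t)\geq \Gamma^{-}(f_{1})(t)$, uniformly in $y$.

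For the bound involving $Z^{(1)}$ (with cumulative distribution function $\Gamma^{-}\circ\Gamma^{+}(f_{1})$), the key observation is that the proof of Proposition~\ref{domNNRPinit} already produced the pointwise inequality
$$F_{X_{I}\vert X_{2}=x_{2}}(t)\leq \Gamma^{-}\circ\Gamma^{+}(f_{1})(t),$$
whose right-hand side does not depend on $x_{2}$ and which was explicitly flagged as valid for any Sawtooth model with three or more particles. When there are four or more particles, $X_{F}\neq X_{2}$, so by the Markov property $F_{X_{I}\vert X_{2}=x_{2},X_{F}=y}(t)=F_{X_{I}\vert X_{2}=x_{2}}(t)$ and averaging against the conditional law of $X_{2}$ given $X_{F}=y$ preserves the upper bound. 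When there are exactly three particles, $X_{F}=X_{2}$ and conditioning on $X_{F}=y$ is the conditioning on $X_{2}=y$, so the estimate applies directly. In both regimes one obtains $F_{X_{I}\vert X_{F}=y}(t)\leq \Gamma^{-}\circ\Gamma^{+}(f_{1})(t)$, uniformly in $y$.

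The only point requiring any care, and the main potential obstacle, is to verify that the bound on $F_{X_{I}\vert X_{2}=x_{2}}$ extracted from the proof of Proposition~\ref{domNNRPinit} is genuinely uniform in $x_{2}$, since otherwise the averaging step would not survive. This is clear on re-reading that proof: the dependence on $x_{2}$ disappears after the pointwise inequality $F_{u}(t)\leq F_{0}(t)$ and then the expectation over the auxiliary variable $\tilde{U}$. Combined with Lemma~\ref{stodomrn} in dimension one, the two inequalities above deliver the stochastic bounds $Z^{(1)}\preceq_{-}(X_{I}\vert X_{F}=y)\preceq_{-}Z^{(2)}$ and complete the proof.
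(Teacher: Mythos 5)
Your proof is correct and follows essentially the same approach as the paper's (very terse) proof: it unpacks what ``deduced from the latter Proposition'' means by observing that both one-dimensional bounds occur, with the needed uniformity in the conditioning variable, as intermediate steps in the proof of Proposition~\ref{domNNRPinit} --- the bound $F_{X_{I}\vert X_{2}=x_{2}}(t)\leq\Gamma^{-}\circ\Gamma^{+}(f_{1})(t)$ explicitly, and the bound $F_{X_{I}\vert Y_{1}=y_{1}}(t)\geq\Gamma^{-}(f_{1})(t)$ via Proposition~\ref{monoton} --- and then re-averages them against the conditional law given $X_{F}=y$. Your treatment of the three-particle case (condition on $X_{2}=X_{F}$ directly) is exactly what the paper's remark about no longer needing $X_{2}\neq X_{F}$ is pointing at.
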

\begin{proof}
For $n\geq 3$, the result is deduced from the latter Proposition. In the case $n=2$, the proof is exactly the same as in the latter Proposition, except that we only deal with the left case, and thus we don't need anymore the fact that $X_{2}\not =X_{F}$.
\end{proof}

\section{The independence theorem in a bounded Sawtooth Model}

\subsection{Decorrelation principle and bounding Lemmas}
This section is devoted to the proof of the independence of $X_{I}$ and $X_{F}$ when the number of particles grows whereas the repulsion forces remain bounded. 
\begin{defn}
Let $A>0$. A Sawtooth model $\mathcal{S}$ with density functions $\lbrace f_{i},g_{i}\rbrace$ is bounded by $A$ if 
$$\sup (\Vert f_{i}\Vert_{[0,1]},\Vert g_{i}\Vert_{[0,1]})\leq A.$$
\end{defn}
The purpose is to prove the following Theorem :
\begin{thm}\label{indepPart}
Let $A>0$. For all $\epsilon>0$ there exists $N_{A}\geq 0$ such that for all Sawtooth model $\mathcal{S}$ bounded by $A$ and with $2n\geq N_{A}$ particles we have :
$$\Vert d_{X_{I},X_{F}}(x,y)-d_{X_{I}}(x)d_{X_{F}}(y)\Vert_{\infty}\leq \epsilon$$
\end{thm}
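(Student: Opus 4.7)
The plan is to exploit the Markov structure of the Sawtooth model (Lemma \ref{markov}) together with the monotonicity and stochastic sandwich tools of Proposition \ref{monoton2} and Corollary \ref{domNNRP}, in order to obtain a uniform contraction of the conditional density of $X_I$ as a function of the particles far away from it. The strategy decomposes the problem into a Markov interpolation step and a quantitative mixing step.

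First I would insert an intermediate particle at position $s = \lfloor n/2 \rfloor$ and decompose, by the Markov property,
\begin{align*}
d_{X_I, X_F}(x, y) &= \int_0^1 d_{X_I \mid Y_s = z}(x)\, d_{Y_s, X_F}(z, y)\, dz, \\
d_{X_I}(x)\, d_{X_F}(y) &= \int_0^1 d_{X_I \mid Y_s = z}(x)\, d_{Y_s}(z)\, dz \cdot d_{X_F}(y).
\end{align*}
Since $\int_0^1 \bigl[ d_{Y_s, X_F}(z, y) - d_{Y_s}(z)\, d_{X_F}(y) \bigr]\, dz = 0$, subtracting any function $\bar d(x)$ that does not depend on $z$ from $d_{X_I \mid Y_s=z}(x)$ leaves the difference unchanged, and therefore
\begin{equation*}
d_{X_I, X_F}(x,y) - d_{X_I}(x)\, d_{X_F}(y) = \int_0^1 \bigl[d_{X_I \mid Y_s = z}(x) - \bar d(x)\bigr]\, \bigl[d_{Y_s, X_F}(z, y) - d_{Y_s}(z)\, d_{X_F}(y)\bigr]\, dz.
\end{equation*}
Choosing $\bar d(x)$ between the extremes of $z \mapsto d_{X_I \mid Y_s = z}(x)$, the first factor is controlled by the oscillation
$\omega_s(x) := \sup_{z, z'} \lvert d_{X_I \mid Y_s = z}(x) - d_{X_I \mid Y_s = z'}(x) \rvert$.

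Second, the core step is to show that $\omega_s(x) \to 0$, uniformly in $x$ and over all $A$-bounded convex Sawtooth models, as $s \to \infty$. Since by Proposition \ref{monoton2} the function $z \mapsto F_{X_I \mid Y_s = z}(x)$ is monotone, it suffices to bound the gap between its two endpoints. Viewing the truncated sub-model $\mathcal{S}_{\rightarrow Y_s}$ as a convex Sawtooth model ending at $Y_s$, Corollary \ref{domNNRP} provides, for each fixed $y$, a stochastic sandwich $Z^{(1)} \preceq (X_I \mid Y_s = y) \preceq Z^{(2)}$ whose bounds depend only on $f_1$. Iterating such sandwiches along the Markov chain of $Y_k$'s and using the $A$-boundedness to force the kernels to spread at a uniform rate should yield a geometric contraction $\omega_s(x) \le C(A)\, \rho(A)^s$ with $\rho(A) < 1$, obtained via a coupling of two versions of the chain differing only in the conditioning on $Y_s$.

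Third, the conclusion follows from a uniform control on $d_{X_F}$. The integral $\int_0^1 \lvert d_{Y_s, X_F}(z, y) - d_{Y_s}(z)\, d_{X_F}(y)\rvert\, dz$ is bounded by $2\, d_{X_F}(y)$, and $\sup_y d_{X_F}(y)$ is itself bounded by a constant depending only on $A$, using Corollary \ref{domNNRP} (the stochastic sandwich together with the $A$-boundedness forces the marginal density of $X_F$ to remain bounded). Combining,
\begin{equation*}
\bigl\Vert d_{X_I, X_F} - d_{X_I} \otimes d_{X_F} \bigr\Vert_\infty \;\le\; 2 \sup_y d_{X_F}(y) \cdot \sup_x \omega_s(x),
\end{equation*}
which tends to $0$ as $n \to \infty$, uniformly over all $A$-bounded convex Sawtooth models.

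The main obstacle is the quantitative mixing in the second step: the densities $f_i, g_i$ may vanish at $0$, so a direct Doeblin minorization of the transition kernel is unavailable, and one must exploit the geometric constraints $X_k \le Y_k \ge X_{k+1}$ together with log-concavity to build an explicit successful coupling of two copies of the chain in a bounded number of steps. Producing a contraction constant $\rho(A) < 1$ depending only on the uniform density bound $A$ is the technical heart of the argument.
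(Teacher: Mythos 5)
Your decomposition at an intermediate upper particle $Y_s$ via the Markov property (Lemma \ref{markov}) is valid, the identity
$d_{X_I,X_F}(x,y)-d_{X_I}(x)d_{X_F}(y)=\int_0^1[d_{X_I\mid Y_s=z}(x)-\bar d(x)][d_{Y_s,X_F}(z,y)-d_{Y_s}(z)d_{X_F}(y)]\,dz$
holds for any $z$-independent $\bar d$, and the bound $\sup_y d_{X_F}(y)\leq K_A$ is available from Lemma \ref{boundDen} (applied symmetrically). However, the entire argument hinges on the claim that
$\omega_s(x)=\sup_{z,z'}\lvert d_{X_I\mid Y_s=z}(x)-d_{X_I\mid Y_s=z'}(x)\rvert\leq C(A)\,\rho(A)^s$,
a uniform exponential contraction which you assert and whose absence you yourself identify as the unresolved "technical heart." This is a genuine gap, and it is exactly the place where the paper takes a different, and decidedly non-contractive, route.

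The paper does \emph{not} establish any geometric mixing. Instead of bounding an oscillation along a fixed chain, it argues as follows: (i) Proposition \ref{positParticle} shows that in any window of $N_0$ consecutive lower particles, at least one is within $\eta$ of the boundary with probability $\geq 1-(1-\eta)^{2N_0+1}$, uniformly over all models; (ii) Proposition \ref{dependSplitPart} uses the monotonicity of $F_{X_I\mid X_i=0}$ in $i$ (Proposition \ref{monoton2}) together with a pigeonhole argument in $[0,1]^{K+1}$ to locate, inside any sufficiently large model, a window of $N_0$ indices on which the cumulative distribution functions $F_{X_I\mid X_i=0}$ oscillate by at most $\epsilon$. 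Step (ii) is precisely the substitute for the Doeblin minorization you could not produce: it copes with the fact that the "state near the boundary" has a different influence on $X_I$ depending on which particle visits it, because the intervening sub-model $\mathcal{S}_{\rightarrow X_i}$ changes with $i$. Your proposal implicitly assumes the conditional law $d_{X_I\mid Y_s=\cdot}$ comes from a single ergodic kernel iterated $s$ times, which is not the case here: each step uses a different pair $(f_i,g_i)$, and the only uniform information is the bound $A$, which does not rule out densities vanishing near $0$. As a result the quantitative rate one can actually extract from the paper's proof is far weaker than exponential (the required size $N_{\epsilon,A,M}$ grows like $M(\lfloor 3/\epsilon\rfloor+1)^{K+1}$ with $K\sim 1/\epsilon$), which is a strong hint that a uniform geometric contraction constant $\rho(A)<1$ simply does not exist in this generality. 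A further, more minor difference: the paper argues at the level of cumulative distribution functions (Proposition \ref{mainStep}) and only passes to densities at the very end via Lemma \ref{dens/Cumul}, which requires the derivative bound of Lemma \ref{boundDeriv}; working directly with density oscillations $\omega_s(x)$ as you do would require this derivative bound to be fed into the mixing estimate anyway.
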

The pattern of the proof is the following : conditioned on the fact that a particle $P$ - from now on called a splitting particule - is closed to the boundary of the domain, the left part $\mathcal{S}_{\rightarrow P}$ and the right part $\mathcal{S}_{\leftarrow P}$ of the system are almost not correlated anymore (see Figure \ref{fig4}).
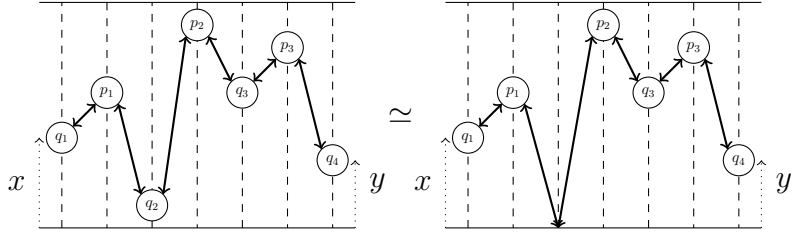
\begin{figure}[!h]

\begin{tikzpicture}[scale=0.3][center]
\begin{scope}
\draw (0,0) -- (14,0);
\draw (0,10)-- (14,10);
\draw [dashed] (1,0) to (1,10);
\draw [dashed] (5,0) to (5,10);
\draw [dashed] (9,0) to (9,10);
\draw [dashed] (3,10) to (3,0);
\draw [dashed] (7,10) to (7,0);
\draw [dashed] (11,10) to (11,0);
\draw [dashed] (13,10) to (13,0);
\node [draw] [circle,fill=white,scale=0.5](q1) at (1,4) {$q_{1}$};
\node [draw] [circle,fill=white,scale=0.5](q2) at (5,1) {$q_{2}$};
\node [draw] [circle,fill=white,scale=0.5](q3) at (9,6) {$q_{3}$};
\node [draw] [circle,fill=white,scale=0.5](q4) at (13,3) {$q_{4}$};
\node [draw] [circle,fill=white,scale=0.5](p1) at (3,6) {$p_{1}$};
\node [draw] [circle,fill=white,scale=0.5](p2) at (7,9) {$p_{2}$};
\node [draw] [circle,fill=white,scale=0.5](p3) at (11,8) {$p_{3}$};

\draw [<->,thick](q1.north east) to (p1.south west);
\draw [<->,thick](p1.south east) to (q2.north west);
\draw [<->,thick](q2.north east) to (p2.south west);
\draw [<->,thick](p2.south east) to (q3.north west);
\draw [<->,thick](q3.north east) to (p3.south west);
\draw [<->,thick](p3.south east) to (q4.north west);
\draw[dotted,->] (0,0)--(0,4);
\node at (-1,2) {$x$};
\draw [dotted, ->] (14,0)--(14,3);
\node at (15,2) {$y$};
\end{scope}
\begin{scope}[shift={(16,0)}]
\node at (0,5) {$\simeq$};
\end{scope}
\begin{scope}[shift={(18,0)}]
\draw (0,0) -- (14,0);
\draw (0,10)-- (14,10);
\draw [dashed] (1,0) to (1,10);
\draw [dashed] (5,0) to (5,10);
\draw [dashed] (9,0) to (9,10);
\draw [dashed] (3,10) to (3,0);
\draw [dashed] (7,10) to (7,0);
\draw [dashed] (11,10) to (11,0);
\draw [dashed] (13,10) to (13,0);
\node [draw] [circle,fill=white,scale=0.5](q1) at (1,4) {$q_{1}$};
\node [draw] [circle,fill=white,scale=0.5](q3) at (9,6) {$q_{3}$};
\node [draw] [circle,fill=white,scale=0.5](q4) at (13,3) {$q_{4}$};
\node [draw] [circle,fill=white,scale=0.5](p1) at (3,6) {$p_{1}$};
\node [draw] [circle,fill=white,scale=0.5](p2) at (7,9) {$p_{2}$};
\node [draw] [circle,fill=white,scale=0.5](p3) at (11,8) {$p_{3}$};

\draw [<->,thick](q1.north east) to (p1.south west);
\draw [<->,thick](p1.south east) to (5,0);
\draw [<->,thick](5,0) to (p2.south west);
\draw [<->,thick](p2.south east) to (q3.north west);
\draw [<->,thick](q3.north east) to (p3.south west);
\draw [<->,thick](p3.south east) to (q4.north west);
\draw[dotted,->] (0,0)--(0,4);
\node at (-1,2) {$x$};
\draw [dotted, ->] (14,0)--(14,3);
\node at (15,2) {$y$};
\end{scope}
\end{tikzpicture}
\caption{\label{fig4}Decorrelation of the process}
\end{figure}\\
However we may still not have independence if the law of $X_{I}$ and $X_{F}$ depends on which particle splits the system. Thus we have to find a set of particles that is large enough, so that with probability close to one an element of this set is close to the boundary, and such that nonetheless conditioning on having any particle from this set closed to the boundary yields the same law on $(X_{I},X_{F})$.\\
Let us first begin by bounding the density of the $(X_{I},X_{F})$.
\begin{lem}\label{boundDen}
Suppose that $\Vert f_{1}\Vert_{\infty}\leq A$ and let $\mathcal{S}$ be a Sawtooth model larger than $2$. Then there exist $K_{A}$ only depending on $A$ such that for all event $\mathcal{X}$ depending on $\lbrace X_{i},Y_{i}\rbrace_{i\geq 2}$ :
$$\Vert d_{X_{I}\vert \mathcal{X}}\Vert_{\infty}\leq K_{A}.$$
More precisely $K_{A}=4A^{2}$ fits. 
\end{lem}
This Lemma was already mentioned in the specific context of compositions in \cite{bender2004asymptotics}. We provide here a different proof.
\begin{proof}
By Lemma \ref{markov}, it suffices to prove it for a conditioning on $\lbrace X_{2}=x_{2}\rbrace$. From Lemma \ref{monden}, $d_{X_{I}\vert X_{2}=x_{2}}(x)$ is decreasing in $x$ and thus it is enough to bound $d_{X_{I}\vert X_{2}=x_{2}}(0)$. We have
$$d_{X_{I}\vert X_{2}=x_{2}}(0)=\frac{\int_{x_{2}}^{1}f_{1}(z)g_{1}(z-x_{2})dz}{\int_{x_{2}}^{1}F_{f_{1}}(z)g_{1}(z-x_{2})dz}\leq A\frac{\int_{x_{2}}^{1}g_{1}(z-x_{2})dz}{\int_{x_{2}}^{1}F_{f_{1}}(z)g_{1}(z-x_{2})dz}.$$
Remark that
$$\frac{\int_{x_{2}}^{1}g_{1}(z-x_{2})dz}{\int_{x_{2}}^{1}F_{f_{1}}(z)g_{1}(z-x_{2})dz}=\frac{1}{\mathbb{E}_{\tilde{Z}}(F_{f_{1}}(\tilde{Z}))},$$ with $\tilde{Z}$ being a random variable with density $\mathbf{1}_{z\geq x_{2}}g_{1}(z-x_{2})$. 
Since $\Vert F_{f_{1}}'\Vert\leq A$ and $F_{f_{1}}(1)=1$, $F_{f_{1}}(t)\geq 1/2$ on $[1-1/(2A)]$; moreover $z\mapsto g_{1}(z-x_{2})$ is increasing, thus $\mathbb{P}(\tilde{Z}\in[1-1/(2A),1])\geq \frac{1}{2A}$ and by Markov's inequality $\mathbb{E}_{\tilde{Z}}(F_{f_{1}}(\tilde{Z}))\geq 1/4A$.
Finally
$$d_{X_{I}\vert X_{2}=x_{2}}(0)\leq 4A^{2}.$$
\end{proof}
The next step is to get a bound on the first derivative of $d_{X_{I}}$. This is possible only if $g_{1}$ is also bounded by $A$ and the model is large enough.
\begin{lem}\label{boundDeriv}
Suppose that $\sup (\Vert f_{1}\Vert _{\infty},\Vert g_{1}\Vert_{\infty})\leq A$ and that $\mathcal{S}$ is a Sawtooth model with at least four particles. Then there exists a constant $R_{A}$ only depending on $A$ such that for any event $\mathcal{X}$ depending on $\lbrace X_{i+1},Y_{i}\rbrace_{i\geq 2}$,
$$\Vert (d_{X_{I}\vert \mathcal{X}})'\Vert_{\infty}\leq R_{A}.$$
\end{lem}
\begin{proof}
For exactly the same reasons as in the previous proof, it suffices to bound the derivative of the density conditioned on  $\mathcal{X}=\lbrace Y_{2}=y_{2}\rbrace$. 
The expression of the density probability yields
$$d_{X_{I}\vert Y_{2}=y_{2}}(x)=\frac{\int_{x}^{1}f_{1}(y_{1}-x)d_{Y_{1}\vert Y_{2}=y_{2}}(y_{1})dy_{1}}{\int_{0}^{1}\left(\int_{x}^{1}f_{1}(y_{1}-x)d_{Y_{1}\vert  Y_{2}=y_{2}}(y_{1})dy_{1}\right)dx}.$$
Let $\Delta=\int_{0}^{1}\left(\int_{x}^{1}f_{1}(y_{1}-x)d_{Y_{1}\vert Y_{2}=y_{2}}(y_{1})dy_{1}\right)dx$, which is independent of $x$. Then
\begin{align*}
\vert\frac{\partial}{\partial x}d_{X_{I}\vert Y_{2}=y_{2}}(x)\vert=&\frac{1}{\Delta}\vert \frac{\partial}{\partial x}\int_{x}^{1}f_{1}(y_{1}-x)d_{Y_{1}\vert  Y_{2}=y_{2}}(y_{1})dy_{1}\vert\\
=&\frac{1}{\Delta}\vert\int_{x}^{1}(\frac{\partial}{\partial x}f_{1}(y_{1}-x))d_{Y_{1}\vert  Y_{2}=y_{2}}(y_{1})dy_{1}-f_{1}(0)d_{Y_{1}\vert \mathcal{S}_{Y_{2}\leftarrow}, Y_{2}=y_{2}}(x)\vert\\
\leq &\frac{1}{\Delta}\left(\vert \int_{x}^{1}-(\frac{\partial}{\partial x}f_{1})(y_{1}-x)d_{Y_{1}\vert  Y_{2}=y_{2}}(y_{1})dy_{1}\vert +\vert f_{1}(0)d_{Y_{1}\vert Y_{2}=y_{2}}(x)\vert\right),
\end{align*}
Let us first bound the numerator. By the expression of the density of $Y_{1}$ conditioned on $Y_{2}=y_{2}$,
$$d_{Y_{1}\vert  Y_{2}=y_{2}}(y_{1})=\frac{F_{f_{1}}(y_{1})d_{Y_{1},\mathcal{S}_{Y_{1}\leftarrow}\vert Y_{2}=y_{2}}(y_{1})}{\mathbb{E}_{\tilde{Y}_{1}}(F_{f_{1}}(\tilde{Y}_{1}))},$$
with $\tilde{Y}_{1}$ having the density $d_{Y_{1},\mathcal{S}_{Y_{1}\leftarrow}\vert Y_{2}=y_{2}}$. Since $g_{1}$ is bounded by $A$, from Lemma \ref{boundDen}, $\vert d_{Y_{1},\mathcal{S}_{Y_{1}\leftarrow}\vert Y_{2}=y_{2}}\vert\leq K_{A}$. From Lemma \ref{monden}, $d_{Y_{1},\mathcal{S}_{Y_{1}\leftarrow}\vert Y_{2}=y_{2}}(y)$ is increasing in $y$, and $\vert F_{f_{1}}'\vert\leq A$, thus $\mathbb{E}_{\tilde{Y}_{1}}(F_{f_{1}}(\tilde{Y}_{1}))\geq \frac{1}{4A^{2}}$ and 
$$\vert f_{1}(0)d_{Y_{1}\vert \mathcal{S}_{Y_{2}\leftarrow}\vert Y_{2}=y_{2}}(x)\vert\leq 4A^{2}K_{A}^{2}.$$

Let us bound also the first term of the sum: $f_{1}$ being increasing, $\frac{\partial}{\partial x}f_{1}(y_{1}-x)\leq 0$ and we can thus remove the absolute value in this first term. An other application of Lemma \ref{boundDen} yields:
\begin{align*}
 \int_{x}^{1}-(\frac{\partial}{\partial x}f_{1})(y_{2}-x)d_{Y_{1}\vert  Y_{2}=y_{2}}(y_{1})dy_{1}&\leq K_{A}(\int_{x}^{1}(\frac{\partial}{\partial x}f_{1})(y_{2}-x)dy_{2})\\
 &\leq K_{A}((f_{1}(1-x)-f_{1}(0))\leq A\times K_{A}.\\
 \end{align*}
 The numerator is thus bounded by $AK_{A}+4A^{2}K_{A}^{2}$.\\
Interverting the integrals in $\Delta$ yields :
 $$\Delta=\int_{0}^{1}F_{f_{1}}(y_{1})d_{Y_{1}\vert Y_{2}=y_{2}}(y_{1})dy_{1}.$$
Since $F_{f_{1}}'$ is bounded by $A$ and $F_{f_{1}}(1)=1$, we can conclude as in the previous proof that $F_{f_{1}}(t)\geq \frac{1}{2A}$ on $[1-1/(2A),1]$. Moreover $Y_{1}$ is an upper particule and thus by Lemma \ref{monden}, $d_{Y_{1}\vert Y_{2}=y_{2}}(y_{1})$ is increasing in $y_{2}$. Since $\int_{[0,1]}d_{Y_{1}\vert Y_{2}=y_{2}}=1$, this implies that
$$\int_{1-1/(2A)}^{1}d_{Y_{1}\vert Y_{2}=y_{2}}(y_{1})dy_{1}\geq \frac{1}{2A},$$
and yields
 $\Delta\geq \frac{1}{4A^{2}}.$
 The bounds on the numerator and on $\Delta$ yield : 
 $$\vert \frac{\partial}{\partial x}d_{X_{I}\vert Y_{2}=y_{2}}(x)\vert\leq 4A^{3}(K_{A}+4AK_{A}^{2}).$$
\end{proof}
As an application of the latter Lemma, we can also prove that $y\mapsto F_{X_{I}\vert X_{F}=y}(t)$ is Lipchitz :
\begin{prop}\label{boundCumul}
Let $\mathcal{S}$ be a Sawtooth model with $n\geq 3$ lower particles. Suppose that $\lbrace f_{1},g_{1},f_{n},g_{n}\rbrace$ are bounded by $A>0$. Let $R_{A}$ be the constant of Lemma \ref{boundDeriv} (with $R_{A}\geq 1$). Then on a neighbourhood $[0,1/R_{A}]$ of $0$,
$$\mathcal{F}:\left\lbrace \begin{matrix} [0,1/R_{A}]&\rightarrow&(\mathcal{C}([0,1],\mathbb{R}),\Vert.\Vert)\\
y&\mapsto& F_{X_{I}\vert X_{F}=y}\end{matrix}\right.$$
 is Lipschitz with a Lipschitz constant $B_{A}$ only depending on $A$.
\end{prop}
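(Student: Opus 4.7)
The strategy is to differentiate $F_{X_I\mid X_F=y}(t)$ with respect to $y$ and bound the derivative uniformly in $t\in[0,1]$, which yields the sup-norm Lipschitz estimate by integration. Using the factorization $d_{X_I,X_F}(x,y)=d_{X_I}(x)d_{X_F\mid X_I=x}(y)$ one writes
$$F_{X_I\mid X_F=y}(t)=\frac{N(t,y)}{D(y)},$$
where $N(t,y)=\int_0^t d_{X_I}(x)d_{X_F\mid X_I=x}(y)\,dx$ and $D(y)=d_{X_F}(y)$. The task reduces to bounding $N$ and $\partial_y N$ from above, and the denominator $D$ from below.

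For the numerator I would apply the symmetric analogues of Lemmas \ref{boundDen} and \ref{boundDeriv} obtained by reversing the left/right orientation of the Sawtooth model, so that $\sup(\|f_n\|_\infty,\|g_n\|_\infty)\leq A$ plays the role of $\sup(\|f_1\|_\infty,\|g_1\|_\infty)\leq A$. The assumption $n\geq 3$ ensures that the event $\{X_I=x\}=\{X_1=x\}$ lies in the conditioning range allowed by the reversed Lemma \ref{boundDeriv}, giving both $d_{X_F\mid X_I=x}(y)\leq K_A$ and $|\partial_y d_{X_F\mid X_I=x}(y)|\leq R_A$ uniformly in $x$; hence $|N(t,y)|\leq K_A$ and $|\partial_y N(t,y)|\leq R_A$. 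For the denominator, Lemma \ref{monden} applied to the trivial event shows that $D$ is decreasing, so from $\int_0^1 D=1$ one has $D(0)\geq 1$; combined with $|D'|\leq R_A$ this gives $D(y)\geq 1-R_A y$, in particular $D(y)\geq 1/2$ on an interval of the form $[0,1/(2R_A)]$.

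Assembling these estimates via the quotient rule,
$$\left|\frac{\partial}{\partial y}F_{X_I\mid X_F=y}(t)\right|=\left|\frac{D(y)\partial_y N(t,y)-N(t,y)D'(y)}{D(y)^2}\right|\leq \frac{2K_A R_A}{D(y)^2}\leq 8K_A R_A,$$
uniformly in $t\in[0,1]$, so $y\mapsto F_{X_I\mid X_F=y}$ is Lipschitz in sup norm with constant $B_A=8K_A R_A$. The only real subtlety is the lower bound on $D$, which dictates how large the neighbourhood of $0$ can be taken; after absorbing a bounded factor into the constant $R_A$ (preserving its dependence only on $A$), one recovers the stated interval $[0,1/R_A]$.
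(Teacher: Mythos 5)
Your argument is essentially the paper's argument. Both proofs reduce to the quotient rule for $d_{X_I,X_F}(x,y)/d_{X_F}(y)$ (you integrate in $x$ and work with $F$; the paper works with the density $d_{X_I\mid X_F=y}(x)$ directly, which is equivalent after integration), and both draw on the same three ingredients: Lemma \ref{boundDen} to bound $d_{X_I,X_F}$ and $d_{X_F}$ from above, Lemma \ref{boundDeriv} (and its left--right mirror image, which is exactly where the hypothesis on $f_n,g_n$ and $n\geq 3$ enter) to bound the $y$-derivatives, and Lemma \ref{monden} to get monotonicity of $d_{X_F}$.

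The one place where you fall short of the statement is the lower bound on $D(y)=d_{X_F}(y)$, which determines the interval. You argue $D(0)\geq 1$ from $\int_0^1 D=1$ and monotonicity, then propagate with $|D'|\leq R_A$ to get $D(y)\geq 1-R_Ay$; this is vacuous at $y=1/R_A$ and only gives $D\geq 1/2$ on $[0,1/(2R_A)]$, a strictly smaller interval than the $[0,1/R_A]$ in the statement. You cannot actually ``absorb a bounded factor into $R_A$'' since $R_A$ is the fixed constant from Lemma \ref{boundDeriv} (and is also used to define $\eta$ in Proposition \ref{mainStep}). The paper's argument is sharper and recovers the stated interval: it bounds the total mass $1=\int_0^1 d_{X_F}$ from above by $d_{X_F}(1/R_A)+\tfrac{1}{2R_A}$ (using monotonicity on $[1/R_A,1]$ and the derivative bound on $[0,1/R_A]$), giving $d_{X_F}(1/R_A)\geq 1-\tfrac{1}{2R_A}$, and then monotonicity gives $d_{X_F}\geq 1-\tfrac{1}{2R_A}\geq\tfrac12$ on all of $[0,1/R_A]$. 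Replacing your lower bound argument with this one makes your proof match the statement exactly; otherwise the approach is the same.
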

\begin{proof}
It suffices to prove that for $x\in[0,1]$, $y\mapsto d_{X_{I}\vert X_{F}=y}(x)$ is Lipschitz on $[0,1/R_{A}]$ with a Lipschitz constant independent of $x$.\\
 From Lemma \ref{monden}, $d_{X_{F}}$ is decreasing and thus on $[1/R_{A},1]$, $d_{X_{F}}\leq d_{X_{F}}(1/R_{A})$. From Lemma \ref{boundDeriv}, $\vert\frac{\partial}{\partial y}d_{X_{F}}(y)\vert\leq R_{A}$ and thus on $[0,1/R_{A}]$, $d_{X_{F}}(y)\leq d_{X_{F}}(1/R_{A})+R_{A}(1/R_{A}-y)$. This implies that
\begin{align*} 
 \int_{[0,1]}d_{X_{F}}(y)dy\leq&\int_{0}^{1/R_{A}}d_{X_{F}}(1/R_{A})+R_{A}(1/R_{A}-y) dy+\int_{1/R_{A}}^{1}d_{X_{F}}(1/R_{A})\\
 \leq &d_{X_{F}}(1/R_{A})+\frac{1}{2R_{A}}.
 \end{align*}
 Since $\int_{[0,1]}d_{X_{F}}=1$, this implies that $d_{X_{F}}(1/R_{A})\geq 1-\frac{1}{2R_{A}}$, and thus that $d_{X_{F}}\geq 1-\frac{1}{2R_{A}}$ on $[0,1/R_{A}]$.\\
 From Lemma \ref{boundDeriv}, $\Vert \frac{\partial}{\partial y}d_{X_{F}\vert X_{I}=x}\Vert\leq R_{A}$. Thus since $\Vert f_{1}\Vert \leq A$ this yields by applying Lemma \ref{boundDen} on $d_{X_{I},X_{F}}(x,y)=d_{X_{F}\vert X_{I}=x}(y)d_{X_{I}}(x)$:
 $$\vert \frac{\partial}{\partial y}d_{X_{I},X_{F}}(x,y)\vert\leq  K_{A}R_{A}.$$
 Thus on $[0,1/R_{A}]$,
\begin{align*} 
 \vert \frac{\partial}{\partial y}d_{X_{I}\vert X_{F}=y}(x)\vert =&\frac{1}{d_{X_{F}}(y)}\vert \frac{\partial}{\partial y}d_{X_{I},X_{F}}(x,y)-\frac{d_{X_{I},X_{F}}(x,y)\frac{\partial}{\partial y}d_{X_{F}}(y)}{d_{X_{F}}(y)}\vert\\
 \leq& \frac{1}{1-1/(2R_{A})}(K_{A}R_{A}+\frac{R_{A}K_{A}^{2}}{1-1/(2R_{A})}).
 \end{align*}
Set $B_{A}=\frac{1}{1-1/(2R_{A})}(K_{A}R_{A}+\frac{R_{A}K_{A}^{2}}{1-1/(2R_{A})})$. Then $\mathcal{F}$ is $B_{A}-$Lipschitz on $[0,1/R_{A}]$.
 \end{proof}
\subsection{Behavior of $\lbrace X_{i}\rbrace$ for large models}
The purpose of this subsection is to find for a model $\mathcal{S}$ a large set of intermediate particles $\lbrace X_{r}\rbrace$ for which almost surely one of these particles is close to $0$ and such that $F_{X_{I}\vert X_{r}=0}$ is essentially the same for all particles of this set. 
The first part is a essentially propability computation :
\begin{prop}\label{positParticle}
Let $\eta>0,\epsilon>0$. There exists $N_{0}$ such that for any model $\mathcal{S}$ of size $N$ larger than $N_{0}+4$ and for any $2\leq r\leq N-N_{0}$, $y_{r+N_{0}}\in[0,1]$,
$$\mathbb{P}(\bigcup_{r\leq i\leq r+N_{0}}\lbrace X_{i}<\eta\rbrace\vert Y_{r+N_{0}}=y_{r+N_{0}})\geq 1-\epsilon.$$
\end{prop}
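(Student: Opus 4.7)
The plan is to obtain the bound by a simple iterative conditioning argument combined with Lemma \ref{monden}. By that lemma, for each index $r$ and each event $\mathcal{X}$ depending only on the positions of the particles other than $X_r$, the conditional density $d_{X_r\vert \mathcal{X}}$ is decreasing on its support, which is contained in $[0, Y_{r-1}\wedge Y_r]\subseteq [0,1]$. For any probability density $d$ that is decreasing on $[0,y]$ with $y\leq 1$ and integrates to $1$, the average of $d$ on $[0,\eta]$ is at least its average $1/y$ on $[0,y]$, so
$$\int_0^\eta d(x)\,dx \;\geq\; \frac{\eta}{y} \;\geq\; \eta.$$
Applying this observation to the conditional density of $X_r$ yields the pointwise estimate
$$\mathbb{P}\bigl(X_r\geq \eta \,\big\vert\, (X_j,Y_j)_{j\neq r},\ Y_{r+N_0}=y_{r+N_0}\bigr)\ \leq\ 1-\eta \quad\text{a.s.}$$

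Next I would peel off the constraints $\{X_i\geq\eta\}$ one index at a time, starting from $i=r$. For $k\in\{0,\dots,N_0\}$, set $A_k=\bigcap_{i=r+k}^{r+N_0}\{X_i\geq \eta\}$. Since $A_{k+1}$ is measurable with respect to the $\sigma$-algebra generated by all the particles except $X_{r+k}$, the tower property together with the pointwise bound above gives
$$\mathbb{P}\bigl(A_k \,\big\vert\, Y_{r+N_0}=y_{r+N_0}\bigr)\ \leq\ (1-\eta)\,\mathbb{P}\bigl(A_{k+1} \,\big\vert\, Y_{r+N_0}=y_{r+N_0}\bigr).$$
At the base of the recursion ($k=N_0$) the same estimate applies directly to the conditional density of $X_{r+N_0}$ given only $Y_{r+N_0}=y_{r+N_0}$, which by Lemma \ref{monden} is decreasing on $[0,y_{r+N_0}]\subseteq[0,1]$, hence
$\mathbb{P}(X_{r+N_0}\geq \eta\,\vert\, Y_{r+N_0}=y_{r+N_0})\leq 1-\eta$. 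Iterating,
$$\mathbb{P}\!\left(\bigcap_{i=r}^{r+N_0}\{X_i\geq \eta\}\,\Big\vert\, Y_{r+N_0}=y_{r+N_0}\right)\ \leq\ (1-\eta)^{N_0+1},$$
so taking complements gives the announced bound as soon as $N_0$ is chosen large enough to ensure $(1-\eta)^{N_0+1}\leq \epsilon$.

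There is essentially no real obstacle once Lemma \ref{monden} is available: the whole statement reduces to a geometric-decay estimate. The only point worth emphasizing is that the elementary ``decreasing density on $[0,y]\subseteq[0,1]$'' lower bound $\eta/y\geq\eta$ is uniform both in the values of the remaining variables and in the choice of the density functions $f_i,g_i$. It is precisely this uniformity that makes the threshold $N_0$ depend only on $\eta$ and $\epsilon$ and not on the particular model $\mathcal{S}$, which is exactly what the proposition requires.
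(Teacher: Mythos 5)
Your proof is correct, and it takes a genuinely different route from the paper's. The paper conditions on $Y_{r-1}=y_{r-1}$, writes the conditional probability of the complementary event $\bigcap\{X_i\geq\eta\}$ as a ratio of explicit integrals, and performs the affine change of variables $u_i=(x_i-\eta)/(1-\eta)$, $v_i=(y_i-\eta)/(1-\eta)$ on the numerator; the monotonicity of each $f_i,g_i$ then shows the numerator is dominated by $(1-\eta)^{2N_0+1}$ times the denominator, since the Jacobian contributes one factor of $1-\eta$ per integrated variable. Your argument instead isolates the purely probabilistic content of Lemma~\ref{monden}: a density that is decreasing on $[0,y]\subseteq[0,1]$ puts mass at least $\eta/y\geq\eta$ on $[0,\eta]$, so each $X_{r+k}$ has conditional probability at most $1-\eta$ of exceeding $\eta$ given everything else; peeling off the events $\{X_{r+k}\geq\eta\}$ one at a time via the tower property then gives geometric decay $(1-\eta)^{N_0+1}$. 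The two approaches are complementary: the paper's change-of-variables argument is more hands-on with the integral representation and incidentally extracts the slightly sharper exponent $2N_0+1$ because it also rescales the $y$-variables, whereas yours is shorter, cleaner, and reuses a lemma already available, at the cost of only peeling off the $x$-coordinates. Since the statement only requires the existence of \emph{some} $N_0$ making the bound below $\epsilon$, the weaker exponent is immaterial. One small point worth noting: the recursion's base case does need the separate observation you make, that $d_{X_{r+N_0}\mid Y_{r+N_0}=y_{r+N_0}}$ is itself decreasing and supported in $[0,y_{r+N_0}]$, which again is exactly Lemma~\ref{monden} applied with $\mathcal{X}=\{Y_{r+N_0}=y_{r+N_0}\}$; you handle this correctly.
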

\begin{proof}
Let $N_{0}$ be an integer to specify later and $\mathcal{S},r$ as in the statement of the Proposition. Let $\tilde{P}=\mathbb{P}(\bigcap_{r\leq i\leq r+N_{0}}\lbrace X_{i}\geq\eta\rbrace\vert Y_{r+N_{0}}=y_{r+N_{0}})$. Condition this probability on the value of $Y_{r-1}=y_{r-1}$ and denote by $P$ this quantity. Then we have 
$$P=\frac{\int_{[\eta,1]^{N_{0}+1}}\int_{[\eta,1]^{N_{0}}}\prod_{r\leq i\leq r+N_{0}} \mathbf{1}_{x_{i}\leq y_{i},y_{i-1}}f_{i}(y_{i}-x_{i})g_{i}(y_{i-1}-x_{i})\prod dx_{i} dy_{i}}{\int_{[0,1]^{N_{0}+1}}\int_{[0,1]^{N_{0}}}\prod_{r\leq i\leq r+N_{0}} \mathbf{1}_{x_{i}\leq y_{i},y_{i-1}}f_{i}(y_{i}-x_{i})g_{i}(y_{i-1}-x_{i})\prod dx_{i}\prod dy_{i}}.$$
We can operate the linear change of variable 
$$\left\lbrace\begin{matrix}
x_{i}\rightarrow u_{i}=\frac{1}{1-\eta} (x_{i}-\eta)\\
y_{i}\rightarrow v_{i}=\frac{1}{1-\eta} (y_{i}-\eta)
\end{matrix}\right.$$
on the numerator. This yields 
\begin{align*}
P=&\int\frac{(1-\eta)^{2N_{0}+1}\prod_{r\leq i\leq r+N_{0}} \mathbf{1}_{v_{i}\geq u_{i},u_{i+1}}f_{i}((1-\eta)(v_{i}-u_{i}))g_{i}((1-\eta)(v_{i}-u_{i+1}))}{\int_{[0,1]^{2N_{0}+1}}\prod_{r\leq i\leq r+N_{0}} \mathbf{1}_{x_{i}\leq y_{i},y_{i-1}}f_{i}(y_{i}-x_{i})g_{i}(y_{i-1}-x_{i})\prod dx_{i}\prod dy_{i}}\\
&\times g_{r}(y_{r-1}-((1-\eta)u_{r}+\eta))f_{r+N_{0}}(y_{r+N_{0}}-((1-\eta)u_{r+N_{0}}+\eta))\prod du_{i} dv_{i}.
\end{align*}
Now recall that each $f_{i}, g_{i}$ is increasing. Moreover $(1-\eta)(u-v)\leq (u-v)$, and $y-((1-\eta)u+\eta)\leq y-u$. Thus 
$$P\leq (1-\eta)^{2N_{0}+1}.$$
Let $N_{0}$ be such that $ (1-\eta)^{2N_{0}+1}\leq \epsilon$. Then by averaging on $y_{r-1}$,
$$\tilde{P}\leq \epsilon,$$
and this concludes the proof.
\end{proof}
As said before, it is also necessary that $F_{X_{I}\vert X_{r}=0}$ remains almost constant among this subset of particles. This is possible for large Sawtooth models, thank to the monotony results of Proposition \ref{monoton2} :
\begin{prop}\label{dependSplitPart}
Let $A,\epsilon>0$, $M\in\mathbb{N}^{*}$. There exists $N_{\epsilon,A,M}$ such that for any Sawtooth model bounded by $A$ and of size $N\geq N_{\epsilon,A,M}$, there exists $1\leq r\leq N-M$ such that for $r\leq i,j\leq r+M$,
$$\Vert F_{X_{I}\vert X_{i}=0}-F_{X_{I}\vert X_{j}=0}\Vert\leq \epsilon.$$
\end{prop}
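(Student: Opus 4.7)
The plan is to introduce the sequence $\bar{F}_n(t) := F_{X_I \vert X_n = 0}(t)$, to establish that $n \mapsto \bar{F}_n(t)$ is pointwise non-increasing while $t \mapsto \bar{F}_n(t)$ is uniformly Lipschitz, and then to run a pigeonhole argument on a fine grid in $t$ to produce the desired window.

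First I would prove the pointwise monotonicity in $n$. By the Markov property (Lemma \ref{markov}),
\[
\bar{F}_{n+1}(t) = \int_{0}^{1} F_{X_I \vert X_n = z}(t) \, d_{X_n \vert X_{n+1}=0}(z)\,dz.
\]
Proposition \ref{monoton2} guarantees that $z \mapsto F_{X_I \vert X_n = z}(t)$ is non-increasing, so it is bounded above by its value at $z = 0$, namely $\bar{F}_n(t)$. Integrating against the probability density $d_{X_n \vert X_{n+1}=0}$ yields $\bar{F}_{n+1}(t) \leq \bar{F}_n(t)$. In particular, whenever $r \leq i \leq j \leq r+M$ one has the pointwise bound $0 \leq \bar{F}_i(t) - \bar{F}_j(t) \leq \bar{F}_r(t) - \bar{F}_{r+M}(t)$. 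Next, Lemma \ref{boundDen} applied to $\mathcal{X} = \{X_n = 0\}$ (for any $n \geq 2$) bounds the conditional density $d_{X_I \vert X_n = 0}$ by $K_A = 4A^2$, uniformly over models bounded by $A$, so every $\bar{F}_n$ is $K_A$-Lipschitz in $t$.

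Next I would run a pigeonhole argument. Choose a grid $0 = t_0 < t_1 < \cdots < t_L = 1$ of spacing $\delta := \epsilon/(4K_A)$, so that $L \leq 4K_A/\epsilon$, and partition the integers of $[2, N]$ into $W = \lfloor (N-1)/(M+1)\rfloor$ non-overlapping consecutive windows of length $M+1$. For each fixed $t_k$, the nonnegative quantities $\bar{F}_{r}(t_k) - \bar{F}_{r+M}(t_k)$ indexed by the windows sum, by monotonicity and telescoping, to at most $1$; so fewer than $2/\epsilon$ windows can satisfy $\bar{F}_{r}(t_k) - \bar{F}_{r+M}(t_k) > \epsilon/2$. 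A union bound over the $L+1$ grid points shows that at most $2(L+1)/\epsilon$ windows are bad at some grid point. Taking $N_{\epsilon, A, M}$ so that $W > 2(L+1)/\epsilon$ (which is of order $K_A(M+1)/\epsilon^2$) forces the existence of a window $[r, r+M]$ that is good at every $t_k$.

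For such a good window and any $t \in [0,1]$, picking the nearest grid point $t_k$ and combining with the Lipschitz estimate gives
\[
\bar{F}_r(t) - \bar{F}_{r+M}(t) \leq \bar{F}_r(t_k) - \bar{F}_{r+M}(t_k) + 2K_A \delta \leq \epsilon/2 + \epsilon/2 = \epsilon,
\]
so by the monotonicity inequality $\Vert \bar{F}_i - \bar{F}_j \Vert \leq \epsilon$ for all $i, j \in [r, r+M]$, which is the conclusion. The main conceptual ingredient—and the step I expect to be the most delicate—is the pointwise monotonicity of $\bar{F}_n$ in $n$, which rests crucially on the log-concavity of the model through Proposition \ref{monoton2}. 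Once this and Lemma \ref{boundDen} are in hand, the remaining pigeonhole argument is routine.
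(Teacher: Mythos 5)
Your proof is correct, and the two key ingredients — the pointwise monotonicity of $n\mapsto F_{X_I\vert X_n=0}(t)$ (via the Markov property plus Proposition~\ref{monoton2}) and the uniform $K_A$-Lipschitz bound on $t\mapsto F_{X_I\vert X_n=0}(t)$ (via Lemma~\ref{boundDen}) — are exactly those on which the paper's proof rests. You are in fact more careful than the paper on the first point: the paper simply cites Proposition~\ref{monoton2} as giving that $F_{X_I\vert X_n=0}(t)$ decreases in $n$, whereas you supply the one-line derivation (condition on $X_n$, use the Markov property, use monotonicity in the conditioning value, and integrate) that actually converts the statement of Proposition~\ref{monoton2} into that fact. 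Where the two arguments genuinely diverge is the pigeonhole step. The paper discretises $t$ into $K+1$ grid points with $K=\lfloor 2K_A/\epsilon\rfloor$, records each $F_i$ as a vector $v_i\in[0,1]^{K+1}$, and applies the Dirichlet principle to the partition of $[0,1]^{K+1}$ into roughly $(3/\epsilon)^{K+1}$ small hypercubes; monotonicity in $n$ is then used only \emph{after} pigeonholing, to upgrade $M$ nearby vectors to all intermediate indices. This gives an admissible bound $N_{\epsilon,A,M}$ that is \emph{exponential} in $K$, i.e.\ in $K_A/\epsilon$. You instead exploit the monotonicity \emph{before} counting: the increments $\bar F_r(t_k)-\bar F_{r+M}(t_k)$ over disjoint windows telescope to at most $1$, so at each grid point at most $2/\epsilon$ windows can be bad, and a union bound over $O(K_A/\epsilon)$ grid points gives a good window once $N$ is of order $K_A M/\epsilon^2$. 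So the two routes reach the same conclusion, but yours extracts substantially more from the monotonicity and yields a polynomial rather than exponential bound on $N_{\epsilon,A,M}$. (Incidentally, your window bookkeeping is also cleaner: the paper's pigeonhole produces $M$ distinct indices, hence a window of length only $M-1$, a harmless but real off-by-one that your explicit length-$(M+1)$ windows avoid.)
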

\begin{proof}
Let $\mathcal{S}$ be a Sawtooth model bounded by $A$ and of size $N$.\\
Denote by $F_{i}$ the function $t\mapsto F_{X_{I}\vert X_{i}=0}(t)$ for $2\leq i\leq N$. By Lemma \ref{boundDen}, all the $F_{i}$ are $K_{A}-$Lipschitz. Let $K=\lfloor \frac{2K_{A}}{\epsilon}\rfloor$. It suffices to find $r\geq 2$ such that for all $r\leq i,j\leq r+M$, and all $0\leq k\leq K$,
$$\vert F_{i}(\frac{k}{K})-F_{j}(\frac{k}{K})\vert\leq \frac{\epsilon}{3}.$$
Denote by $v_{i}\in[0,1]^{K+1}$ the vector $(F_{i}(\frac{k}{K}))_{0\leq k\leq K}$. Let $N_{\epsilon,A,M}=M(\lfloor\frac{3}{\epsilon}\rfloor+1)^{K+1}$. Then if $N\geq N_{\epsilon,A,M}$, by the Dirichlet principle on $[0,1]^{K+1}$, there exists an hypercube of size $\frac{\epsilon}{3}$ that contains at least $M$ distinct points $v_{i_{1}},\dots,v_{i_{M}}$ (with $i_{1}<\dots<i_{M}$). Moreover by Proposition \ref{monoton2}, for all fixed $0\leq k\leq K$, $v_{i}(k)=F_{i}(\frac{k}{K})$ is decreasing and thus for all $i_{1}\leq i\leq i_{M}$, $v_{i_{1}}(k)\leq v_{i}(k)\leq v_{i_{M}}(k)$. This yields for all $i_{1}\leq i,j\leq i_{M}$, 
$$\Vert v_{i}-v_{j}\Vert_{\infty}\leq \frac{\epsilon}{3}.$$ 
\end{proof}
\subsection{Proof of Theorem \ref{indepPart}}
Theorem \ref{indepPart} is a consequence of the following proposition :
\begin{prop}\label{mainStep}
Let $A>0$. For all $\epsilon>0$, there exists a number $N_{A,\epsilon}\geq 0$ such that for all Sawtooth model $\mathcal{S}$ bounded by $A$ and with $2n\geq N_{A,\epsilon}$ particles we have :
$$\vert F_{X_{I}\vert X_{F}=y}(t)-F_{X_{I}}(t)\vert\leq \epsilon.$$
for all $t,y\in[0,1]$.
\end{prop}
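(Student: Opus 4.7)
The plan is to find, with high probability, an intermediate particle $X_T$ close to $0$, and use it to decouple $X_I$ from $X_F$ through the Markov property. Three ingredients, already established, drive the argument: Proposition~\ref{positParticle} guarantees that in a fixed-length window some $X_i$ falls below $\eta$ with high probability; Proposition~\ref{dependSplitPart} provides a window on which $F_{X_I \vert X_i = 0}$ is almost constant in $i$; and Proposition~\ref{boundCumul}, applied to the reduced model $\mathcal{S}_{\to X_i}$, supplies a Lipschitz estimate for $x \mapsto F_{X_I \vert X_i = x}$ near $0$.

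Parameters are chosen in the following order. Let $R_A, B_A$ be the constants of Proposition~\ref{boundCumul} and set $\eta = \min(1/R_A,\, \epsilon/(3 B_A))$. Apply Proposition~\ref{positParticle} with this $\eta$ and error $\epsilon/3$ to obtain an integer $M$; then apply Proposition~\ref{dependSplitPart} with this $M$ and error $\epsilon/3$ to obtain the threshold $N_{A,\epsilon}$. For $\vert\mathcal{S}\vert \geq N_{A,\epsilon}$ (enlarged if necessary so that the window can be chosen with $r \geq 4$), we obtain a window $[r, r+M]$ such that $\Vert F_{X_I \vert X_i = 0} - F_{X_I \vert X_j = 0}\Vert \leq \epsilon/3$ for all $i, j \in [r, r+M]$. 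The Lipschitz estimate is valid because conditioning $\mathcal{S}$ on $X_i = x$ yields the same law for $(X_1, Y_1, \ldots, Y_{i-1})$ as conditioning the reduced model $\mathcal{S}_{\to X_i}$ on its last particle being $x$; since $\mathcal{S}$ is bounded by $A$ so is the reduced model, and Proposition~\ref{boundCumul} gives the $B_A$-Lipschitz property of $x \mapsto F_{X_I \vert X_i = x}$ on $[0, 1/R_A]$.

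Introduce the rightmost auxiliary index $T = \max\{i \in [r, r+M] : X_i < \eta\}$, left undefined on the complement. Proposition~\ref{positParticle} combined with Markov separation through $Y_{r+M}$ (which renders $X_F$ conditionally independent of $(X_r, \ldots, X_{r+M})$ once $Y_{r+M}$ is fixed, and which then passes through integration against $d_{Y_{r+M} \vert X_F = y}$) yields $\mathbb{P}(T \text{ undefined} \vert X_F = y) \leq \epsilon/3$ uniformly in $y$. Decomposing on the events $\{T = i\}$:
\[
F_{X_I \vert X_F = y}(t) = \sum_{i = r}^{r+M} \mathbb{P}(T = i \vert X_F = y) \, \mathbb{E}[F_{X_I \vert X_T}(t) \vert T = i, X_F = y] + E_1(y, t),
\]
with $\vert E_1(y, t)\vert \leq \epsilon/3$. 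On $\{T = i\}$, the events $\{X_j \geq \eta\}$ for $j > i$ and the value $X_F = y$ all live strictly to the right of $X_i$, so Lemma~\ref{markov} collapses the conditioning in the conditional cdf of $X_I$: $F_{X_I \vert T = i, X_T = x, X_F = y}(t) = F_{X_I \vert X_i = x}(t)$. The Lipschitz estimate (valid since $x < \eta \leq 1/R_A$) together with Proposition~\ref{dependSplitPart} then gives
\[
\vert F_{X_I \vert X_i = x}(t) - F_{X_I \vert X_r = 0}(t)\vert \leq B_A \eta + \epsilon/3 \leq 2\epsilon/3.
\]

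Summing the probabilities $\mathbb{P}(T = i \vert X_F = y)$, which add to at least $1 - \epsilon/3$, collapses the finite sum into $F_{X_I \vert X_r = 0}(t)$ up to an $O(\epsilon)$ error, so $F_{X_I \vert X_F = y}(t) = F_{X_I \vert X_r = 0}(t) + O(\epsilon)$ uniformly in $(t, y)$. Averaging against $d_{X_F}$ yields the same expression for $F_{X_I}(t)$, and the triangle inequality concludes. The subtle design choice is selecting $T$ as the \emph{rightmost} hit in the window rather than the leftmost: on $\{T = i\}$, the ``no later hit'' conditions $\{X_j \geq \eta\}$ for $j > i$ then lie strictly to the right of $X_T$, precisely where Lemma~\ref{markov} eliminates them from the conditional law of $X_I$; with the reverse choice those conditions would sit between $X_I$ and $X_T$, where no such Markov reduction is available.
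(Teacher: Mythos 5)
Your proposal is correct and is essentially the paper's own proof, down to the key design choice: the paper also partitions the event that some $X_i$ in the window falls below $\eta$ into disjoint pieces $L_i=\{X_i\leq\eta\}\cap\{\forall s>i,\,X_s>\eta\}$ indexed by the \emph{rightmost} hit, for exactly the Markov-collapse reason you articulate. The only cosmetic differences are your $\epsilon$-bookkeeping (you split into thirds, the paper uses $\epsilon/4$ for Proposition~\ref{dependSplitPart} and $\epsilon/B_A$ for $\eta$, ending with a $4\epsilon$ bound) and the order of operations around the Markov bridge (the paper first proves $\Vert F_{X_I\vert Y_t=y_t}-F_{X_I\vert X_r=0}\Vert_\infty\leq 3\epsilon$ uniformly in $y_t$ and then integrates against $d_{Y_t\vert X_F=y}$, while you carry the conditioning on $X_F=y$ through from the start and note the conditional independence explicitly); you also flag the need to take $r\geq 4$ so that $\mathcal{S}_{\to X_i}$ has enough particles for Proposition~\ref{boundCumul}, a point the paper glosses over. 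None of these affect the substance.
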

\begin{proof}
Set $\eta=\inf(\frac{1}{R_{A}},\frac{\epsilon}{B_{A}})$ with $R_{A},B_{A}$ the constants given respectively by Lemma \ref{boundDeriv} and Proposition \ref{boundCumul}. Let $N_{0}$ be the constant given for $\eta$ and $\epsilon$ by Proposition \ref{positParticle}. And finally set $N_{A,\epsilon}=N_{\epsilon/4,A,N_{0}}+4$ given by Proposition \ref{dependSplitPart}.\\
Let $\mathcal{S}$ be a Sawtooth model bounded by $A$ of size larger than $N_{A,\epsilon}$. Then by Proposition \ref{dependSplitPart}, there exists $2\leq r\leq N_{A,\epsilon}-2-N_{0}$ such that for all $r\leq i,j\leq r+N_{0}$,
$$\Vert F_{X_{I}\vert X_{i}=0}-F_{X_{I}\vert X_{j}=0}\Vert_{\infty}\leq \epsilon.$$
Denote $t=r+N_{0}$ and let $y_{t}\in[0,1]$.
For $r\leq i\leq r+N_{0}$, set $L_{i}=\lbrace X_{i}\leq \eta \cap \lbrace \forall s>i, X_{s}>\eta\rbrace\rbrace $. Note that $L_{i}\cap L_{j}=\emptyset$ for all $i\not = j$ and $\bigcup L_{i}=L$ with $L=\bigcup_{r\leq i\leq r+N_{0}}\lbrace X_{i}\leq\eta\rbrace$. Moreover since $L_{i}$ is $(X_{s},Y_{s})_{s\geq i}-$measurable, by Lemma \ref{markov}, conditioning $X_{I}$ on $\lbrace X_{i}=u,Y_{t}=y_{t}\rbrace\cap L_{i}$ is the same as conditioning $X_{I}$ on $\lbrace X_{i}=u\rbrace$. Thus
\begin{align*}
\Vert F_{X_{I}\vert L_{i},Y_{t}=y_{t}}-F_{X_{I}\vert X_{r}=0}\Vert_{\infty}=&\Vert \int_{0}^{\eta}(F_{X_{I}\vert X_{i}=u}-F_{X_{I}\vert X_{r}=0})d_{X_{i}\vert L_{i} ,Y_{t}=y_{t}}(u)du\Vert_{\infty}\\
\leq &\int_{0}^{\eta}\Vert F_{X_{I}\vert X_{i}=u}-F_{X_{I}\vert X_{r}=0}\Vert_{\infty}d_{X_{i}\vert L_{i},Y_{t}=y_{t}}(u)du\\
\leq &2\epsilon,
\end{align*}
by the choice of $\eta$. Recall that if $A=\bigcup A_{i}$, with $A_{i}$ disjoint events, then for any event $C$,
$$\mathbb{P}(C\vert A)=\sum \mathbb{P}(C\vert A_{i})\mathbb{P}(A_{i}\vert A)$$
In particular for $L=\bigcup_{i}L_{i}$ this yields
\begin{align*}
\Vert F_{X_{I}\vert L,Y_{t}=y_{t}}-F_{X_{I}\vert X_{r}=0}\Vert=&\Vert \sum_{i} (F_{X_{I}\vert L_{i},Y_{t}=y_{t}}-F_{X_{I}\vert X_{r}=0})\mathbb{P}(L_{i}\vert L,Y_{t}=y_{t})\Vert_{\infty} \\
\leq& \sum_{i}\Vert (F_{X_{I}\vert L_{i},Y_{t}=y_{t}}-F_{X_{I}\vert X_{r}=0}\Vert_{\infty}\mathbb{P}(L_{i}\vert L, Y_{t}=y_{t})\\
\leq& 2\epsilon.
\end{align*}
By Proposition \ref{positParticle} and the choice of $N_{0}$, $\mathbb{P}(L\vert Y_{t}=y_{t})\geq 1-\epsilon$, and thus
$$\Vert F_{X_{I}\vert Y_{t}=y_{t}}-F_{X_{I}\vert X_{r}=0}\Vert_{\infty}\leq 3\epsilon.$$
By averaging on $y_{t}$ with the density $d_{Y_{t}\vert X_{F}=y}$ we get 
$$\Vert F_{X_{I}\vert X_{F}=y}-F_{X_{I}}\Vert_{\infty}\leq 4\epsilon.$$
\end{proof}
Let us end the proof of the Theorem \ref{indepPart}, which consists essentially in a rewriting in terms of densities of the latter Proposition.
\begin{proof}
Let $A>0,\epsilon>0$. Set $\epsilon_{1}=\frac{(\epsilon/K_{A)}^{2}}{2R_{A}}$ and let $\mathcal{S}$ be a Sawtooth model bounded by $A$ of size larger than $N_{A,\epsilon_{1}}$ ($N_{A,\epsilon_{1}}$ being given by Proposition \ref{mainStep}). Then from Proposition \ref{mainStep}, for $y\in [0,1]$,
\begin{equation}\label{eq 1}
\Vert F_{X_{I}\vert X_{F}=y}-F_{X_{i}}\Vert_{\infty} \leq \frac{(\epsilon/K_{A})^{2}}{2R_{A}}.
\end{equation}
Moreover the following result holds for $C^{1}-$functions on $[0,1]$:
\begin{lem}\label{dens/Cumul}
Let $f,g:[0,1]\rightarrow [0,1]$ be two $C^{1}-$ functions, such that $\Vert f'\Vert_{\infty},\Vert g'\Vert_{\infty}\leq M$. Then for $\epsilon>0$, if $F,G$ are two primitive of $f,g$ and 
$$\Vert F-G\Vert_{\infty}\leq\frac{\epsilon^{2}}{2M},$$
then $\Vert f-g\Vert_{\infty}\leq \epsilon$.
\end{lem}
Applying this Lemma to \eqref{eq 1} yields for $y\in[0,1]$,
$$\Vert d_{X_{I}\vert X_{F}=y}-d_{X_{i}}\Vert_{\infty} \leq\epsilon/K_{A}.$$
And finally,
$$\vert d_{X_{I},X_{F}}(x,y)-d_{X_{I}}(x)d_{X_{F}}(y)\vert=\vert d_{X_{F}}(y)\Vert \vert d_{X_{I}\vert X_{F}=y}(x)-d_{X_{I}}(x))\vert\leq K_{A}\frac{\epsilon}{K_{A}}\leq \epsilon.$$
\end{proof}
\section{Application to compositions}
Theorem \ref{indepPart} can be applied to the framework of compositions :
\begin{cor}\label{indepcom}
Let $A\geq 0,\epsilon>0$. There exists $n\geq 0$ such that for any composition $\lambda$ of size larger than $n$ with every runs bounded by $A$,
$$\Vert d_{\mathcal{S}_{\lambda}}(x,y)-d_{\mathcal{S}_{\lambda}}(x)d_{\mathcal{S}_{\lambda}}(y)\Vert<\epsilon$$
\end{cor}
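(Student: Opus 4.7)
The statement is essentially a direct application of Theorem \ref{indepPart} to the Sawtooth model $\mathcal{S}_{\lambda}$ attached to the composition $\lambda$. What remains is to verify that the two hypotheses of Theorem \ref{indepPart} are satisfied for $\mathcal{S}_{\lambda}$: namely that it is a \emph{convex} Sawtooth model, and that it is bounded by a constant depending only on $A$.

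First I would check convexity. By the construction described after \eqref{eqnLoi}, the non-renormalized densities of $\mathcal{S}_{\lambda}$ are, up to reindexing, all of the form $\tilde{\gamma}_{r}(t)=\frac{1}{(r-2)!}t^{r-2}$, where $r=l(s_{k})\geq 2$ is the length of a run of $\lambda$. A direct computation gives $(\log\tilde{\gamma}_{r})'(t)=(r-2)/t$, which is non-increasing on $(0,1]$, so each $\tilde{\gamma}_{r}$ is log-concave. Since log-concavity is preserved under multiplication by a positive constant, the renormalized densities $f_{j},g_{j}$ are log-concave as well, and therefore $\mathcal{S}_{\lambda}$ is a convex Sawtooth model.

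Next I would control $\Vert f_{j}\Vert_{\infty}$ and $\Vert g_{j}\Vert_{\infty}$. The renormalized version of $\tilde{\gamma}_{r}$ is $\gamma_{r}(t)=(r-1)t^{r-2}$, which attains its maximum on $[0,1]$ at $t=1$ with value $r-1$. Hence, if every run of $\lambda$ has length at most $A$, then each $\Vert f_{j}\Vert_{\infty}$ and $\Vert g_{j}\Vert_{\infty}$ is bounded by $A-1\leq A$, so $\mathcal{S}_{\lambda}$ is a Sawtooth model bounded by $A$ in the sense of the previous section.

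Finally I would invoke Theorem \ref{indepPart}. For the given $\epsilon>0$ the theorem produces an integer $N_{A}$ such that any convex Sawtooth model bounded by $A$ with at least $N_{A}$ particles satisfies
$$\Vert d_{X_{I},X_{F}}(x,y)-d_{X_{I}}(x)d_{X_{F}}(y)\Vert_{\infty}\leq\epsilon.$$
Since $|\lambda|$ is at least $n$ and every run has length at most $A$, the number of runs of $\lambda$, which is (up to the $\pm 1$ coming from the type of $\mathcal{S}_{\lambda}$) the number of particles, is at least $n/A$. Taking $n$ large enough that $n/A\geq N_{A}+O(1)$ therefore yields the conclusion. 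The proof is essentially routine once the identification is made; the only mild obstacle is the bookkeeping of the four possible model types $++,+-,-+,--$ and the convention for runs of length $1$ or $2$, but these are handled by a small case analysis that does not affect the asymptotic.
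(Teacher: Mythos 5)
Your proof is correct and takes essentially the same approach as the paper: identify $\mathcal{S}_{\lambda}$ as a Sawtooth model bounded by $A-1$ (via $\Vert\gamma_{l}\Vert_{\infty}=l-1$) and invoke Theorem \ref{indepPart}. The paper's one-line proof leaves the convexity check and the particle-count-grows-with-$|\lambda|$ observation implicit; you have correctly and usefully filled those in.
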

\begin{proof}
Each run of $\lambda$ of length $l$ yields a density function $\gamma_{l}$ in $\mathcal{S}_{\lambda}$, and $\Vert \gamma_{l}\Vert_{\infty}=l-1$. Thus if any run of $\lambda$ is bounded by $A$, then all the density functions $\lbrace f_{i},g_{i}\rbrace$ in $\mathcal{S}_{\lambda}$ are bounded by $A-1$. It suffices then to apply Theorem \ref{indepPart}.
\end{proof}
The purpose of this section is to strenghen Corollary \ref{indepcom} and to prove the following Theorem :
\begin{thm}\label{indepGen}
Let $\epsilon>0$, $A\geq 0$. There exists $n\geq 0$ such that for any composition $\lambda$ of size larger than $n$ with first and last run bounded by $A$,
\begin{equation}\label{result}
\Vert d_{\mathcal{S}_{\lambda}}(x,y)-d_{\mathcal{S}_{\lambda}}(x)d_{\mathcal{S}_{\lambda}}(y)\Vert<\epsilon.
\end{equation}
\end{thm}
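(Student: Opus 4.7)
The strategy is to reduce Theorem \ref{indepGen} to the fully-bounded case of Corollary \ref{indepcom} by a case analysis on the lengths of the interior runs of $\lambda$. Fix auxiliary thresholds $L=L(\epsilon,A)$, $\delta=\delta(\epsilon,A,L)$, and $n_0=n_0(\epsilon,A,L)$, calibrated in that order. In the easy case, when every run of $\lambda$ has length at most $\max(A,L)$, every density function $f_i,g_j$ appearing in $\mathcal{S}_\lambda$ is bounded by $\max(A,L)-1$, so Corollary \ref{indepcom} yields the claim as soon as $|\lambda|\geq n_0$.

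The substantive case is when some interior run has length $>L$. Let $h_L$, respectively $h_R$, be the interaction in $\mathcal{S}_\lambda$ corresponding to the \emph{first} (respectively \emph{last}) such long interior run, with associated splitting pairs $(P_L,Q_L)$ and $(P_R,Q_R)$. By Lemma \ref{markov}, conditioning on all four splitting particles factorizes the joint law and makes the left sub-system (from $X_I$ to $P_L$) conditionally independent of the right sub-system (from $Q_R$ to $X_F$); writing $\pi$ for the joint marginal of $(P_L,Q_R)$, this yields
\[
d_{X_I,X_F}(x,y)=\int d_{X_I\mid P_L=p}(x)\,d_{X_F\mid Q_R=q}(y)\,d\pi(p,q).
\]
Moreover, because $h_L$ and $h_R$ have the form $\gamma_l$ with $l>L$, the separations $P_L-Q_L$ and $P_R-Q_R$ concentrate near $1$; after controlling the boundary factors from the remaining coordinates (which are monotone by Lemma \ref{monden} and whose ratios on intervals of width $\delta$ near the relevant endpoint are controlled using the log-concave structure), one checks that with probability at least $1-\epsilon/4$ the splitting particles lie in the extreme region where each of $P_L$ and $Q_R$ is within $\delta$ of its extreme value.

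The critical analytic input is Lipschitz continuity of the conditional densities $p\mapsto d_{X_I\mid P_L=p}$ and $q\mapsto d_{X_F\mid Q_R=q}$ near the respective extreme values. Here the choice of $h_L$ as the \emph{first} long run is decisive: every density in the left sub-system is then bounded by $\max(A,L)-1$, so this sub-system, viewed on its own, is a bounded convex sawtooth model to which Proposition \ref{boundCumul} applies (possibly in its mirrored form near $1$ if $P_L$ is an upper particle, the adaptation being a direct rerun of the proof using the increasing density of upper particles). This yields a Lipschitz constant $B=B(A,L)$ for $p\mapsto F_{X_I\mid P_L=p}$ on the appropriate neighborhood of the extreme value, and the analogous statement for $q\mapsto F_{X_F\mid Q_R=q}$ follows by reading the right sub-system right-to-left. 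Combining concentration with this Lipschitz continuity reduces the integral above to $d_{X_I\mid P_L=p_L^\ast}(x)\,d_{X_F\mid Q_R=q_R^\ast}(y)$ up to error $\epsilon/2$, where $(p_L^\ast,q_R^\ast)$ are the extreme values; applying the same reduction to the marginals $d_{X_I}(x)=\int d_{X_I\mid P_L=p}(x)\,d\pi_{P_L}(p)$ and $d_{X_F}(y)$ gives $d_{X_I}(x)\approx d_{X_I\mid P_L=p_L^\ast}(x)$ and $d_{X_F}(y)\approx d_{X_F\mid Q_R=q_R^\ast}(y)$, yielding the desired factorization.

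The main obstacle is this Lipschitz step: Proposition \ref{boundCumul} as stated gives the bound only near $0$ and for a lower last particle, whereas $P_L$ may be an upper particle concentrating near $1$. Verifying the required symmetric version amounts to rerunning the proof of Proposition \ref{boundCumul} with the monotonicity of upper-particle densities (from the remark after Lemma \ref{monden}) in place of that of lower-particle densities; the argument is mechanical but must be written out. A secondary edge case arises when the first long run occurs within the first few runs of $\lambda$, so that the left sub-system has fewer than three particles and Proposition \ref{boundCumul} does not apply; one then computes the required Lipschitz estimate directly from the explicit formula $d_{X_I\mid Y_1=p}(x)=\mathbf{1}_{x\leq p}f_1(p-x)/F_{f_1}(p)$ (and its analogue for conditioning on $X_2$), using the bounded $C^1$-norm of $f_1$ in the spirit of the estimates in the proof of Proposition \ref{domNNRPinit}.
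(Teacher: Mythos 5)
Your proposal takes a genuinely different route from the paper, and it is a natural idea: split at the endpoints of the first and last long interior runs, condition on the splitting particles, use the Markov property to factorize, use concentration from the long‐run potential $\gamma_l$ to pin the splitting particles near the boundary, and then use a Lipschitz estimate to freeze the conditional densities at the extreme values. Unfortunately, as written the scheme has a quantitative circularity that does not close.

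The difficulty is the dependence of the Lipschitz constant on the threshold $L$. You invoke Proposition \ref{boundCumul} on the left sub-system $\mathcal{S}_{\rightarrow P_L}$, and the constant there (through $K_A$ and $R_A$) depends on the sup-norms of \emph{both} the first two and the last two density functions of that sub-system. For $\mathcal{S}_{\rightarrow P_L}$, the last two interactions correspond to the two runs of $\lambda$ immediately preceding the first long run, and since $h_L$ is merely the \emph{first} run of length $>L$, those preceding runs can be as long as $L$. Tracking the constants through Lemmas \ref{boundDen}, \ref{boundDeriv} and Proposition \ref{boundCumul} gives a Lipschitz constant $B(A,L)$ of order $\max(A,L)^{c}$ for some absolute $c>1$. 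Your concentration width must therefore be $\delta\lesssim\epsilon/B(A,L)\sim\epsilon/L^{c}$. But the concentration of $P_L$ within $\delta$ of its extreme value, driven by the factor $\gamma_l(P_L-Q_L)=(l-1)(P_L-Q_L)^{l-2}$, yields a failure probability on the order of $\delta^{-2}\bigl(\tfrac{1-\delta}{1-\delta/2}\bigr)^{l-2}$, and for the worst admissible $l=L+1$ this is roughly $\delta^{-2}e^{-\delta L/2}$. With $\delta\sim\epsilon/L^{c}$ and $c>1$, the exponent $\delta L/2\sim \epsilon/(2L^{c-1})$ tends to $0$, so the failure probability does not go to zero as $L\to\infty$; it in fact diverges because of the $\delta^{-2}$ prefactor. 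In short: the Lipschitz constant blows up polynomially in $L$, the admissible concentration width shrinks polynomially in $L$, but the available concentration is only exponential in $l>L$ at scale $\delta$, and $l$ can be barely larger than $L$. The calibration ``$L$, then $\delta$, then $n_0$'' cannot be made consistent.

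The paper avoids this feedback by working with cumulative distribution functions rather than densities at the crucial step. Proposition \ref{indeplargerun} sandwiches $F_{X_I\mid X_F=x}$ between $F_{X_I\mid X_{i}=0}$ and $F_{X_I\mid \mathcal{S}_{\lambda\rightarrow X_i}}$ using the monotonicity results of Propositions \ref{monoton} and \ref{monoton2} (no Lipschitz constant at all enters here), and then controls the difference of these two \emph{unconditioned} quantities via the one‑cell‑removal Lemma, obtaining the bound $K_A/(R-2)$ with $K_A=4A^2$ depending only on the first run, not on the interior runs. The conversion back to densities (Lemma \ref{dens/Cumul}) requires a derivative bound $R_A$ from Lemma \ref{boundDeriv}, which needs only the first \emph{two} runs bounded; that is why the paper isolates the case of a large second run in a separate Lemma \ref{secondRun}, which is handled by an explicit formula. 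Once the second run is bounded, $R_A$ is a fixed constant, the threshold $C=2K_AR_A/\epsilon^2$ is defined without self‑reference, and the remaining bounded case goes through Corollary \ref{indepcom}. If you want to salvage your splitting strategy you would need, in place of Proposition \ref{boundCumul}, a modulus of continuity for $p\mapsto F_{X_I\mid P_L=p}$ near the extreme value that depends only on $A$ (and possibly on the second run), not on the lengths of the runs adjacent to the long run; the sandwich-plus-removal argument of Proposition \ref{indeplargerun} is precisely such a device. Your secondary gaps (the mirrored version of Proposition \ref{boundCumul} for upper particles, the edge case of a short left sub-system) are real but mechanical; the Lipschitz/concentration mismatch is the substantive one.
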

This Theorem was actually Conjecture 4 in \cite{bender2004asymptotics}. By Lemma \ref{NNRPdes}, the latter Theorem is equivalent to Theorem \ref{goal1}.
The proof of Theorem \ref{indepGen} is followed by some applications.
\subsection{Effect of a large run on the law of $(X_{I},X_{F})$}
From Corollary \ref{indepcom}, it is enough to prove that the presence of a large run inside the composition disconnects the behaviors of $X_{I}$ and $X_{F}$. The main reason for this is the Lemma below: for each composition $\lambda$, denote by $\lambda^{+}$ the composition $\lambda$ with a cell added on the last run, and by $\lambda^{-}$ the composition $\lambda$ with a cell removed on the last run.
\begin{lem}
Let $A>0$ and $\lambda$ a composition with more than three runs and with the first run smaller than $A$. If the last run of $\lambda$ is of size $R$,
$$\Vert d_{\mathcal{S}_{\lambda}}-d_{\mathcal{S}_{\lambda^{+}}}\Vert_{\infty}\leq \frac{K_{A}}{R-1},$$
\end{lem}
where $K_{A}$ is the bound on the density of $X_{I}$ as defined in Lemma \ref{boundDen}.
\begin{proof}
Let us prove it in the case where the first run of $\lambda$ is increasing and the last run decreasing, the other cases having the same proofs. The expression \eqref{eqnLoi} yields
$$d_{X_{I},X_{F}}^{\mathcal{S}_{\lambda^{+}}}(x,y)=\frac{\int_{y}^{1}d_{X_{I},X_{F}}^{\mathcal{S}_{\lambda}}(x,z)dz}{\int_{[0,1]^{2}} (\int_{y}^{1}d_{X_{I},X_{F}}^{\mathcal{S}_{\lambda}}(x,z))dxdy}.$$
Thus by integrating on $y$ and then interverting the integrals, this yields
\begin{align*}
d_{X_{I}}^{\mathcal{S}_{\lambda^{+}}}(x)=&\frac{\int_{0}^{1}(\int_{0}^{1}d_{X_{I},X_{F}}^{\mathcal{S}_{\lambda}}(x,z)\mathbf{1}_{y\leq z}dy)dz}{\int_{[0,1]^{2}}(\int_{0}^{1}d_{X_{I},X_{F}}^{\mathcal{S}_{\lambda}}(x,z)\mathbf{1}_{y\leq z}dy)dxdz}\\
=&\frac{\int_{0}^{1}d_{X_{I},X_{F}}^{\mathcal{S}_{\lambda}}(x,z)zdz}{\int_{[0,1]^{2}}d_{X_{I},X_{F}}^{\mathcal{S}_{\lambda}}(x,z)zdzdx}.
\end{align*}
Factorizing by $d_{X_{I}}^{\mathcal{S}_{\lambda}}(x)$ makes a conditional expectation appear and thus
$$d_{X_{I}}^{\mathcal{S}_{\lambda^{+}}}(x)=d_{X_{I}}^{\mathcal{S}_{\lambda}}(x)\frac{\mathbb{E}_{\mathcal{S}_{\lambda}}(X_{F}\vert X_{I}=x)}{\mathbb{E}_{\mathcal{S}_{\lambda}}(X_{F})}.$$
Moreover Proposition \ref{domNNRPinit} yields
$$F_{Z_{1}}\leq F_{X_{F}\vert X_{I}=x}\leq F_{Z_{2}},$$
with $F_{Z_{1}}=\Gamma^{-}(F_{\gamma_{R}})$ and $F_{Z_{2}}= \Gamma^{-}(\gamma_{R})$. Since $\Gamma^{-}(F_{\gamma_{R}})(t)=1-(1-t)^{R}$ and $\Gamma^{-}(\gamma_{R})(t)=1-(1-t)^{R-1}$, by stochastic dominance applying Proposition \ref{definitionStochastDom} gives
$$\frac{1}{R}\leq\mathbb{E}_{\mathcal{S}_{\lambda}}(X_{F}\vert X_{I}=x)\leq \frac{1}{R-1}.$$
Integrating the latter result on $x$ yields $\frac{1}{R}\leq \mathbb{E}_{\mathcal{S}_{\lambda}}(X_{F})\leq \frac{1}{R-1}$, and thus
$$\frac{R-1}{R}\leq \frac{\mathbb{E}_{\mathcal{S}_{\lambda}}(X_{F}\vert X_{I}=x)}{\mathbb{E}_{\mathcal{S}_{\lambda}}(X_{F})}\leq\frac{R}{R-1}.$$
This yields
$$\vert d_{\mathcal{S}_{\lambda^{+}}}(x)-d_{\mathcal{S}_{\lambda}}(x)\vert\leq\vert d_{\mathcal{S}_{\lambda}}(x)\vert\frac{1}{R-1}\leq \frac{K_{A}}{R-1}.$$
\end{proof}
In particular the previous Lemma can used to bound the conditional law of the first particle with respect to the last one. For each composition $\lambda$, and each cell $i\in\lambda$, denote by $\lambda_{\rightarrow i}$ the composition $\lambda$ truncated just after the cell $i$. Moreover denote by $R_{int}(\lambda)$ the set of all runs of $\lambda$ except the first and last ones.
\begin{prop}\label{indeplargerun}
Let $A\geq 0$ and $\lambda$ a composition with first run bounded by $A$. Then
$$\Vert F_{X_{I}\vert X_{F}=x}-F_{X_{I}}\Vert_{\infty} \leq\frac{K_{A}}{\sup_{s\in R_{int}(\lambda)}l(s)-2}.$$
\end{prop}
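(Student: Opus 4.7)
The plan is to exploit the fact that, combined with the Markov property, a large internal run behaves as an approximate barrier decoupling $X_I$ from $X_F$. Let $s$ be a run of length $L = \sup_{s' \in R_{int}(\lambda)} l(s')$, and let $P_\beta$ be an extreme cell at one endpoint of $s$. Set $\lambda_L = \lambda_{\rightarrow P_\beta}$: its first run is the first run of $\lambda$ (hence of length at most $A$), and its last run is exactly $s$, of length $L$. By Lemma \ref{markov}, $F^\lambda_{X_I \vert P_\beta = v}(t) = F^{\lambda_L}_{X_I \vert X_F = v}(t)$, which gives the decomposition
$$F^\lambda_{X_I \vert X_F = y}(t) - F^\lambda_{X_I}(t) = \int F^{\lambda_L}_{X_I \vert X_F = v}(t) \bigl[d^\lambda_{P_\beta \vert X_F = y}(v) - d^\lambda_{P_\beta}(v)\bigr] dv.$$

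Integrating by parts in $v$ (the boundary terms at $v = 0, 1$ vanish since both $F^\lambda_{P_\beta \vert X_F = y}$ and $F^\lambda_{P_\beta}$ are distribution functions on $[0,1]$), and using the monotonicity of $v \mapsto F^{\lambda_L}_{X_I \vert X_F = v}(t)$ granted by Proposition \ref{monoton2}, I would obtain
$$|F^\lambda_{X_I \vert X_F = y}(t) - F^\lambda_{X_I}(t)| \leq |F^{\lambda_L}_{X_I \vert X_F = 0}(t) - F^{\lambda_L}_{X_I \vert X_F = 1}(t)|.$$
It remains to bound the right-hand side by $K_A/(L-2)$, that is, to show that the conditional law of $X_I$ given $X_F = v$ in $\lambda_L$ depends only weakly on $v$ once $\lambda_L$ has a long last run.

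I would handle this last step by mirroring the method of the preceding Lemma. Writing $d^{\lambda_L}_{X_I \vert X_F = v}(t) = \int d^\mu_{X_I \vert X_F = y}(t) \, d^{\lambda_L}_{Y_n \vert X_F = v}(y) dy$, where $\mu$ is $\lambda_L$ with $s$ and its last cell removed and $Y_n$ is the second-to-last particle of $\lambda_L$, the conditional density of $Y_n$ is proportional to $h(y) \gamma_L(y - v)$, with $h = d^\mu_{X_F}$. Since $\gamma_L(r) = (L-1) r^{L-2}$ concentrates strongly at the upper endpoint of its support, Proposition \ref{domNNRPinit} applied to $s$ (seen as the last run of $\lambda_L$) forces the relevant kernel ratios into the interval $[(L-1)/L, L/(L-1)]$, exactly as in the preceding Lemma. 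Combining this multiplicative control with the uniform bound $K_A$ on the density of extreme particles from Lemma \ref{boundDen} converts the kernel-ratio bound into the claimed additive estimate $K_A/(L-2)$ on the variation of $F^{\lambda_L}_{X_I \vert X_F = v}(t)$. The main obstacle is precisely this last quantitative conversion: tracking how the factor $1/(L-1)$ coming from stochastic dominance combines with the extreme-particle density bound to reproduce the constant $K_A/(L-2)$ from the preceding Lemma in the present conditional setting.
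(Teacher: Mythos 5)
Your first two steps are correct and close in spirit to the paper's argument: after picking an extreme cell $P_\beta$ of a longest internal run and truncating at it (call the truncation $\lambda_L$, exactly the paper's $\lambda_{\rightarrow i_0}$), the Markov property combined with integration by parts and the monotonicity of $v\mapsto F^{\lambda_L}_{X_I\mid X_F=v}(t)$ from Proposition \ref{monoton2} does give
$$\vert F^{\lambda}_{X_I\mid X_F=y}(t)-F^\lambda_{X_I}(t)\vert\leq F^{\lambda_L}_{X_I\mid X_F=0}(t)-F^{\lambda_L}_{X_I\mid X_F=1}(t).$$
The problem is in the final step, and it is a genuine gap, not just a missing computation. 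The paper does not bound this full variation; it uses the chain of inequalities in Propositions \ref{monoton}, \ref{monoton2} and the Corollary to reduce the left-hand side to the strictly smaller quantity
$$F_{X_I\mid X_i=0}(t)-F_{X_I\mid \mathcal{S}_{\lambda}\rightarrow X_i}(t)=F^{\mathcal{S}_{\lambda_{\rightarrow i_0}^-}}_{X_I}(t)-F^{\mathcal{S}_{\lambda_{\rightarrow i_0}}}_{X_I}(t),$$
which is precisely a marginal-density comparison of two compositions and hence falls squarely under the preceding Lemma. Your quantity, by contrast, decomposes as $\bigl(F^{\lambda_L^-}_{X_I}-F^{\lambda_L}_{X_I}\bigr)+\bigl(F^{\lambda_L}_{X_I}-F^{\lambda_L}_{X_I\mid Y_n=1}\bigr)$: the first bracket is the paper's term and is handled by the Lemma, but the second is not a comparison of two compositions and is not covered.

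Your suggestion of mirroring the Lemma does not close this. The Lemma's mechanism is the identity $d^{\lambda^+}_{X_I}(x)=d^\lambda_{X_I}(x)\,\mathbb{E}_\lambda(X_F\mid X_I=x)/\mathbb{E}_\lambda(X_F)$, where the ratio of conditional expectations is pinned to $[(R-1)/R,\,R/(R-1)]$ by Proposition \ref{domNNRPinit}. The analogous identity for your quantity involves the ratio of conditional densities $d^{\lambda_L}_{X_F\mid X_I=x}(v)/d^{\lambda_L}_{X_F}(v)$, and the stochastic-dominance bounds control integrals of these densities, not their pointwise ratio, which can degenerate near $v=1$ where both numerator and denominator vanish. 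Equally, the representation $d^{\lambda_L}_{X_I\mid X_F=v}=\int d^\mu_{X_I\mid Y_n=y}\,d^{\lambda_L}_{Y_n\mid X_F=v}(y)\,dy$ would need a Lipschitz estimate in $y$ on $F^\mu_{X_I\mid Y_n=y}$ near $y=1$, and Lemma \ref{boundDeriv} supplies such an estimate only when the \emph{second} run of $\lambda$ is also bounded, which the Proposition does not assume. A telescoping bound $F^{\lambda_L^-}_{X_I}-F^{\lambda_L}_{X_I\mid Y_n=1}=\sum_{k\geq -1}\bigl(F^{\lambda_L^{+k}}_{X_I}-F^{\lambda_L^{+(k+1)}}_{X_I}\bigr)$ with each term controlled by the Lemma gives $\sum_k K_A/(L-1+k)$, which diverges, so the Lemma alone cannot yield a finite bound here. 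You flag this conversion yourself as "the main obstacle"; it is indeed the place where the argument is incomplete, and the paper's detour through the monotonicity chain is what avoids it.
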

\begin{proof}
Let $t\in[0,1]$. Let $s_{0}$ be the run with maximal length $R$ in $R_{int}$ and let $i_{0}$ be the rightest cell of this run.  This cell corresponds to a particle $X_{i}$ of $Y_{i}$ in $\mathcal{S}_{\lambda}$. Let us assume without loss of generality that this particle is a lower one. From Proposition \ref{monoton}, $F_{X_{1}\vert X_{r}=x}(t)$ is decreasing in $x$ and thus
\begin{align*}
\vert F_{X_{I}\vert X_{F}=x}(t)-F_{X_{I}}(t)\vert=&\vert  F_{X_{I}\vert X_{F}=x}(t)-\int_{X_{F}}F_{X_{I}\vert X_{F}=x}(t)d_{X_{F}}(x)dx\vert\\
\leq &\vert F_{X_{I}\vert X_{F}=0}(t) -F_{X_{I}\vert X_{F}=1}(t)\vert\\
\leq & F_{X_{I}\vert X_{F}=0}(t) -F_{X_{I}\vert Y_{n}=1}(t).
\end{align*}
Moreover from  Proposition \ref{monoton} and Proposition \ref{monoton2},
$$F_{X_{I}\vert X_{F}=0}(t)\leq F_{X_{I}\vert \mathcal{S}_{\lambda}\rightarrow Y_{n}}(t)\leq F_{X_{I}\vert \mathcal{S}_{\lambda}\rightarrow Y_{i}}(t)\leq F_{X_{I}\vert X_{i}=0},$$
and 
$$F_{X_{I}\vert Y_{n}=1}(t)\geq  F_{X_{I}\vert \mathcal{S}_{\lambda}\rightarrow X_{n}}(t)\geq F_{X_{I}\vert \mathcal{S}_{\lambda}\rightarrow X_{i}}(t).$$
These inequalities imply
$$\vert F_{X_{I}\vert X_{F}=x}(t)-F_{X_{I}}(t)\vert\leq F_{X_{I}\vert X_{i}=0}(t)-F_{X_{I}\vert \mathcal{S}_{\lambda}\rightarrow X_{i}}(t).$$
From the expression \eqref{eqnLoi}, $F_{X_{I}\vert \mathcal{S}_{\lambda}\rightarrow X_{i}}(t)=F_{X_{I}, \mathcal{S}_{\lambda_{\rightarrow i_{0}}}}(t)$ and $F_{X_{I}\vert X_{i}=0}(t)=F_{X_{1}, \mathcal{S}_{\lambda_{\rightarrow i_{0}}^{-}}}(t)$. Thus with the previous Lemma, since the last run of $\lambda_{\rightarrow i_{0}}^{-}$ is of size $R-1$,
$$\vert F_{X_{I}\vert X_{F}=x}(t)-F_{X_{I}}(t)\vert\leq \vert F_{X_{I}, \mathcal{S}_{\lambda_{\rightarrow i_{0}}}}(t)-F_{X_{I}, \mathcal{S}_{\lambda_{\rightarrow i_{0}}^{-}}}(t)\vert\leq \frac{K_{A}}{R-2}.$$
\end{proof}
\subsection{Proof of Theorem \ref{indepGen}}
The latter Proposition together with Lemma \ref{dens/Cumul} yields Theorem \ref{indepGen} in case $d_{X_{I}}'$ remains bounded. However the bound of the derivative in Lemma \ref{boundDeriv} requires also a bound on the second run, and the latter is not assume in our case. We should thus deal with this case before getting the general proof. Let us first consider a particular case.
\begin{lem}\label{particularCase}
Let $\lambda_{b}$ be the composition with three runs of respective length  $2$, $b$ and $2$, and $d_{b}(x,y)=d_{X_{I},\vert Y_{2}=y}(x)$. Then the following convergence holds: 
$$\lim\limits_{b\rightarrow \infty}\sup_{[0,1]^{2}}(d_{b}(x,y)-(1-x^{b}))= 0.$$
In particular the asymptotic independence :
\begin{equation}\label{particularCompo}
\lim\limits_{b\rightarrow\infty}\sup_{x,y,y'}(d_{b}(x,y)-d_{b}(x,y'))=0.
\end{equation}
is valid.
\end{lem}
\begin{proof}
After integrating in \eqref{eqnLoi} the coordinates of the particles inside the composition :
\begin{equation}\label{eqnDen}
d_{b}(x,y)=\frac{1-x^{b}-(1-y)^{b}+((x-y)\wedge 0)^{b}}{(1-1/(b+1))(1-(1-y)^{b})+y/(b+1)(1-y)^{b}}.
\end{equation}
Let us show that $\lim\limits_{b\rightarrow\infty}d_{b}(x,y)-(1-x^{b+1})=0$ uniformly in $x$ and $y$. In the denominator of \eqref{eqnDen}, letting $b$ go to $+\infty$ yields
$$(1-\frac{1}{b+1})(1-(1-y)^{b})+y/(b+1)(1-y)^{b}\sim_{b\rightarrow \infty} 1-(1-y)^{b},$$
with the equivalent being uniform in $x$ and $y$. Indeed $$\frac{y/(b+1)(1-y)^{b}}{1-(1-y)^{b}}=\frac{1}{b+1}\frac{(1-y)^{b}}{\sum_{k=0}^{b-1}(1-y)^{k} }\leq \frac{1}{b+1}.$$
Since for $x\in [0,1/2], y\in [1/2,1]$, $d_{b}(x,y)$ converges uniformly to $1$, it suffices to consider in the sequel that $x\in [1/2,1]$ and $y\in [0,1/2]$. Let $\Delta$ be defined as
\begin{align*}
\Delta(x,y)=&\frac{1-x^{b}-(1-y)^{b}+(x-y)^{b}}{1-(1-y)^{b}}-(1-x^{b})\\
=&(1-\frac{x^{b}-(x-y)^{b}}{1-(1-y)^{b}})-(1-x^{b})=\frac{(x-y)^{b}-(1-y)^{b}x^{b}}{1-(1-y)^{b}}.
\end{align*}
A derivative computation shows that $\Delta(x,y)\leq \frac{1}{b}$, which proves the uniform convergence.
Since $\lim\limits_{b\rightarrow\infty}\Vert d_{b}(x,y)-(1-x^{b+1})\Vert_{\infty,[0,1]^{2}}=0$, 
$$\lim\limits_{b\rightarrow\infty}\sup_{y,y',x} (d_{b}(x,y)-d_{b}(x,y'))=0.$$
\end{proof}
From the latter result can be deduced the asymptotic independence with a large second run :
\begin{lem}\label{secondRun}
Let $A,\epsilon>0$. There exist $B_{A}\in\mathbb{N}$ such that if $\lambda$ is a composition with at least three runs, the extreme runs bounded by $A$ and the second run larger than $B_{A}$, then
$$\Vert d_{X_{I},X_{F}}-d_{X_{I}}d_{X_{F}}\Vert_{\infty}\leq \epsilon$$
\end{lem}
\begin{proof}
Let $\lambda$ be a composition with first run of length $a$ and second run of length $b$. From the definition of the density $d_{X_{I},X_{F}}$ in \eqref{eqnLoi}, conditioning the law of $X_{I}$ on the position $x_{P}$ of the particle $P=a+b$ yields
$$d_{X_{I}\vert x_{p}=y}(x)=\frac{\int_{x}^{1}(\int_{0}^{z_{1}\wedge y}(z_{1}-x)^{a-2}(z_{1}-z_{2})^{b-2}dz_{2})dz_{1}}{\mathcal{Z}}.$$
Let $2\leq a\leq A$. Then 
$$d_{X_{I}\vert x_{p}=y}(x)=\frac{\int_{x}^{1}(u-x)^{a-3}d_{b}(u,y)du}{\frac{1}{a-2}\int_{0}^{1}u^{a-2}d_{b}(u,y)du}$$
From the first part of Lemma \ref{particularCase}, $\vert d_{b}(u,y)-(1-u^{b})\vert\to_{b\rightarrow\infty} 0$ uniformly in $u$ and $y$, and thus
$$\frac{1}{a-2}\int_{0}^{1}u^{a-2}d_{b}(u,y)du\rightarrow_{b\rightarrow \infty}\frac{1}{(a-2)(a-1)},$$
uniformly in $y$. Since $a$ is bounded by $A$, and from the second part of Lemma \ref{particularCase},
$$\Vert d_{X_{I}\vert x_{p}=y}-d_{X_{I}\vert x_{p}=y'}\Vert_{\infty}\leq A^{2}\sup_{y,y',x} (d_{b}(x,y)-d_{b}(x,y'))\rightarrow 0$$
uniformly in $y$.
Thus for $b$ large enough, $\Vert d_{X_{I}\vert x_{p}=y}-d_{X_{I}\vert x_{p}=y'}\Vert<\epsilon/A$ for all $y,y'$; then averaging on the law of $x_{p}$ conditioned on $X_{F}=y$ yields $\vert d_{X_{I}\vert X_{F}=y}-d_{X_{I}\vert X_{F}=y'}\vert<\epsilon/A$ for all $y,y'$. And finally this implies that 
$$\Vert d_{X_{I},X_{F}}-d_{X_{I}}d_{X_{F}}\Vert_{\infty}\leq \epsilon.$$\\
\end{proof}

The proof of Theorem \ref{indepGen} is just a gathering of all the previous results :

\begin{proof}
Let $A,\epsilon>0$. Since the first and last runs are bounded by $A$, any composition large enough has at least three runs. Let $B_{A}$ be given by Lemma \ref{secondRun}, $R$ be the associate constante given by Lemma \ref{boundDeriv} for $B_{A}$, and set $C=\frac{2K_{A}R}{(\epsilon/A)^{2}}$. Finally, let $n$ be the integer given by Corollary \ref{indepcom} for compositions of runs bounded by $C$. Suppose that $\lambda$ is a composition larger than $n$. By Lemma \ref{secondRun}, if the second run is larger than $B_{A}$, \eqref{result} is verified. Thus we can suppose that the second run is bounded by $B_{A}$. \\
 If $\lambda$ has a run larger than $C$, then from Proposition \ref{indeplargerun}, 
$$\Vert F_{X_{I}\vert X_{F}=x}-F_{X_{I}}\Vert_{\infty}\leq\frac{K_{A}}{C-1}\leq \frac{(\epsilon/A)^{2}}{2R}.$$
But from Lemma \ref{boundDeriv}, $d_{X_{I}}'$ is bounded by $R$, thus the latter inequality yields with Lemma \ref{dens/Cumul} :
$$\Vert d_{X_{I}\vert X_{F}=y}-d_{X_{I}}\Vert\leq \epsilon/A.$$
And $d_{X_{I}}$ being bounded by $A$, this yields \eqref{result}.\\
Thus we can assume that all the runs of $\lambda$ are bounded by $C$. Once again by the choice of $n$ and Corollary \ref{indepcom}, \eqref{result} is verified.
\end{proof}
Note that we actually proved something stronger than Theorem \ref{indepGen}, namely :
\begin{cor}\label{firstRun}
Let $A,\epsilon>0$. There exists $n$ such that for every composition $\lambda$ of size larger than $n$ and first run bounded by $A$, and for all $y,y'\in[0,1]$,
$$\Vert d_{X_{I}\vert X_{F}=y}-d_{X_{I}\vert X_{F}=y'}\Vert \leq \epsilon.$$
\end{cor}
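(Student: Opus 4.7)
The plan is to retrace the three-case decomposition used in the proof of Theorem \ref{indepGen} and observe that the bound on the \emph{last} run of $\lambda$ is invoked only in the very last step of each case --- namely the passage from the conditional-density estimate $\|d_{X_{I}\vert X_{F}=y}-d_{X_{I}}\|_{\infty}$ to the joint-density estimate by multiplying against $\|d_{X_{F}}\|_{\infty}$. Since Corollary \ref{firstRun} asks only for a bound on the conditional density, it suffices to halt one step earlier in every case.

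Given $A,\epsilon>0$, I first pick $B_{A}$ from Lemma \ref{secondRun}, then the derivative constant $R=R_{\max(A,B_{A})}$ supplied by Lemma \ref{boundDeriv} (valid for compositions whose first two runs are bounded by $\max(A,B_{A})$), then a threshold $C$ on intermediate-run lengths chosen so that $2K_{A}/(C-2)$ is small enough to feed Lemma \ref{dens/Cumul} together with derivative bound $R$, and finally an integer $n$ large enough that Proposition \ref{mainStep} applies to any model $\mathcal{S}_{\lambda}$ whose runs are all bounded by $C$. Let $\lambda$ have size $\geq n$ and first run $\leq A$; then split into three cases.

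If the second run of $\lambda$ has length $\geq B_{A}$ (\textbf{Case A}), let $P$ be the rightmost cell of the second run; revisiting the \emph{proof} of Lemma \ref{secondRun} (rather than its statement) delivers $\sup_{z,z'}\|d_{X_{I}\vert x_{P}=z}-d_{X_{I}\vert x_{P}=z'}\|_{\infty}<\epsilon$. This derivation depends only on the first two runs, via Lemma \ref{particularCase} and the Markov property (Lemma \ref{markov}), and the averaging identity $d_{X_{I}\vert X_{F}=y}(x)=\int d_{X_{I}\vert x_{P}=z}(x)\,d_{x_{P}\vert X_{F}=y}(z)\,dz$ (which uses $d_{X_{I}\vert x_{P}=z,X_{F}=y}=d_{X_{I}\vert x_{P}=z}$ from Lemma \ref{markov}) yields the claim with no last-run bound entering. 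Otherwise the second run is $\leq B_{A}$, and the remaining two subcases --- some intermediate run $\geq C$ (\textbf{Case B}) or all runs $\leq C$ (\textbf{Case C}) --- are handled in parallel: Proposition \ref{indeplargerun} in Case B (whose hypothesis is precisely the first-run bound) and Proposition \ref{mainStep} in Case C (valid since $\mathcal{S}_{\lambda}$ is bounded by $C-1$) both deliver $\|F_{X_{I}\vert X_{F}=y}-F_{X_{I}}\|_{\infty}$ small. A triangle inequality upgrades this to a $y$-versus-$y'$ estimate, and since the first two runs are bounded Lemma \ref{boundDeriv} bounds $|(d_{X_{I}\vert X_{F}=y})'|$ uniformly, so Lemma \ref{dens/Cumul} converts the CDF bound into the desired density bound.

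The main obstacle will be the careful bookkeeping in Case A: I need to confirm that every step in the proof of Lemma \ref{secondRun} up to (but not including) the final multiplication by $d_{X_{F}}$ depends only on the part of the composition to the left of $P$, so that no implicit use of $\|d_{X_{F}}\|_{\infty}$ or any bound on the last run sneaks in. Once that verification is done, the three cases cover all compositions of size $\geq n$ with first run $\leq A$, and the corollary follows.
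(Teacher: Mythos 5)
Your proposal correctly reads the paper's intent: Corollary \ref{firstRun} is asserted by the paper as "what the proof of Theorem \ref{indepGen} actually shows," and your plan of re-running the three cases and halting one step before the multiplication by $\|d_{X_F}\|_{\infty}$ is the right idea. Your treatment of Case A (stopping after the line "$\vert d_{X_I\vert X_F=y}-d_{X_I\vert X_F=y'}\vert<\epsilon/A$'' in Lemma \ref{secondRun}'s proof), and your conversions via Lemma \ref{boundDeriv} and Lemma \ref{dens/Cumul} in Cases B and C, are sound.

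The problem is that your disjunction of cases is not exhaustive once the last-run bound is dropped. In the proof of Theorem \ref{indepGen}, "$\lambda$ has a run $\geq C$'' coincides with "$\lambda$ has an \emph{intermediate} run $\geq C$'' precisely because that theorem's hypotheses bound the first, second, \emph{and last} runs by quantities smaller than $C$; this is why Proposition \ref{indeplargerun} (whose supremum runs over $R_{\mathrm{int}}(\lambda)$, i.e., excludes the last run) is usable there. For Corollary \ref{firstRun} there is a fourth, unhandled case: second run $<B_A$, all intermediate runs $<C$, but last run $\geq C$. (This includes two-run compositions with an arbitrarily long last run.) None of your tools apply: Lemma \ref{secondRun} requires three runs and a large \emph{second} run; Proposition \ref{indeplargerun} ignores the last run; and $\mathcal{S}_{\lambda}$ is not bounded by $C-1$, so Proposition \ref{mainStep} cannot be used on the whole model. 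A separate argument is required. A natural repair is to condition on the extreme cell $Z$ at the start of the last run and use Lemma \ref{markov} to write $F_{X_I\vert X_F=y}(t)=\int F_{X_I\vert Z=z}(t)\,d_{Z\vert X_F=y}(z)\,dz$. The sub-model $\mathcal{S}_{\rightarrow Z}$ has all densities bounded by $C-1$: if it has enough particles, Proposition \ref{mainStep} applied to it, combined with averaging over $d_{Z\vert X_F=y}$, gives the cumulative estimate; if it is small, the very long last run concentrates $Z$ near the boundary regardless of $y$, and together with the monotonicity of $z\mapsto F_{X_I\vert Z=z}(t)$ from Proposition \ref{monoton2} one gets a direct estimate. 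Until some version of this case is supplied, the proposal (and, as written, the paper's unproved assertion) has a gap.
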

\subsection{Consequences and proof of Theorem \ref{goal2}}
Here are some interesting consequences of Theorem \ref{indepGen}. Let us first remove the constraints on the extreme runs. 
\begin{lem}\label{cumulDist}
Let $\epsilon>0$. There exists $n\geq 0$ such that for all compositions larger than $n$ with at least two runs, 
$$\sup_{(y,y')\in [0,1]^{2}}(\Vert F_{X_{I}\vert X_{F}=y}-F_{X_{I}\vert X_{F}=y'}\Vert_{\infty} )\leq \epsilon.$$
\end{lem}
\begin{proof}
Let $R$ be the length of the first run of a composition $\lambda$. From Proposition \ref{domNNRPinit} applied to $\mathcal{S}_{\lambda}$, 
$$1-(1-t)^{R}\leq F_{X_{I}\vert X_{F}=y}(t)\leq 1-(1-t)^{R-1}.$$
Since $\sup_{[0,1]}(u^{R-1}-u^{R})\rightarrow_{R\rightarrow \infty}0$, there exists $A$ such that for any composition with first run larger than $A$,
$$\sup_{[0,1]^{2}}\Vert F_{X_{I}\vert X_{F}=y}-F_{X_{I}\vert X_{F}=y'}\Vert _{\infty}\leq \epsilon.$$
Applying Corollary \ref{firstRun} to $A,\epsilon$ yields that there exists $n$ such that for any composition larger than $n$,
$$\sup_{[0,1]^{2}}\Vert F_{X_{I}\vert X_{F}=y}-F_{X_{I}\vert X_{F}=y'}\Vert_{\infty} \leq \epsilon.$$
\end{proof}
This result can be adapted to show that the law of the first particle depends only on the neighbouring particles : for any composition $\lambda$ of size $N$, and $n\leq N$, denote by $\lambda(n)$ the composition $\lambda$ containing only the $n$ first cells. 
\begin{prop}
Let $\epsilon>0$. There exists $n_{0}\geq 1$ such that for any $n\geq n_{0}$ and any composition $\lambda$ of size larger than $n$ with first run smaller than $n$,
$$\Vert F_{X_{I}}^{\mathcal{S}_{\lambda}}-F_{X_{I}}^{\mathcal{S}_{\lambda(n)}}\Vert_{\infty}\leq \epsilon.$$
\end{prop}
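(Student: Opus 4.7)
The plan is to combine Lemma \ref{cumulDist} with the Markov property of the full filling space in order to reduce the statement to the asymptotic indistinguishability of the conditional laws in $\mathcal{S}_{\lambda(n)}$. First, I would apply Lemma \ref{cumulDist} with the threshold $\epsilon$ to obtain an integer $n_{0}$ such that, for every composition $\mu$ of size larger than $n_{0}$ with at least two runs,
$$\sup_{y,y'\in[0,1]}\Vert F_{X_{I}\vert X_{F}=y}^{\mathcal{S}_{\mu}}-F_{X_{I}\vert X_{F}=y'}^{\mathcal{S}_{\mu}}\Vert_{\infty}\leq\epsilon.$$
Since $F_{X_{I}}^{\mathcal{S}_{\mu}}$ is the average of $F_{X_{I}\vert X_{F}=z}^{\mathcal{S}_{\mu}}$ against $d_{X_{F}}^{\mathcal{S}_{\mu}}$, this immediately yields $\Vert F_{X_{I}\vert X_{F}=y}^{\mathcal{S}_{\mu}}-F_{X_{I}}^{\mathcal{S}_{\mu}}\Vert_{\infty}\leq\epsilon$ for every $y\in[0,1]$, and I claim the same $n_{0}$ works for the proposition.

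Fix $n\geq n_{0}$ and a composition $\lambda$ of size $N>n$ whose first run has length strictly less than $n$. Then $\lambda(n)$ has size exactly $n\geq n_{0}$, and since it contains the full first run of $\lambda$ together with at least one cell of the second run, $\lambda(n)$ has at least two runs. Moreover cell $n$ is by construction the last cell of $\lambda(n)$, hence an extreme cell of $\lambda(n)$, and its value under the random filling corresponds to the last particle $X_{F}^{\mathcal{S}_{\lambda(n)}}$. The key step is a Markov identity, best seen in the polytope description of Proposition \ref{voldes}: the filling of $\lambda$ is uniform on a region of $[0,1]^{N}$ cut out by inequalities between pairs of consecutive cells, so conditioning on the value $x_{n}=z$ makes $(x_{1},\ldots,x_{n-1})$ and $(x_{n+1},\ldots,x_{N})$ independent. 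Integrating out the non-extreme coordinates strictly to the left of cell $n$ then produces exactly the conditional density of $\mathcal{S}_{\lambda(n)}$ given $X_{F}=z$, yielding
$$F_{X_{I}\vert x_{n}=z}^{\mathcal{S}_{\lambda}}(t)=F_{X_{I}\vert X_{F}=z}^{\mathcal{S}_{\lambda(n)}}(t),\qquad z\in[0,1].$$

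Averaging this identity against the law $d_{x_{n}}^{\lambda}$ of $x_{n}$ in the filling of $\lambda$ and using $F_{X_{I}}^{\mathcal{S}_{\lambda(n)}}(t)=F_{X_{I}}^{\mathcal{S}_{\lambda(n)}}(t)\int_{0}^{1}d_{x_{n}}^{\lambda}(z)\,dz$, I would obtain
$$F_{X_{I}}^{\mathcal{S}_{\lambda}}(t)-F_{X_{I}}^{\mathcal{S}_{\lambda(n)}}(t)=\int_{0}^{1}\left[F_{X_{I}\vert X_{F}=z}^{\mathcal{S}_{\lambda(n)}}(t)-F_{X_{I}}^{\mathcal{S}_{\lambda(n)}}(t)\right]d_{x_{n}}^{\lambda}(z)\,dz,$$
whose absolute value is bounded by $\epsilon$ uniformly in $t$ thanks to the first step. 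The main obstacle I anticipate is the justification of the Markov identity when cell $n$ is not extreme in $\lambda$: in that case $x_{n}$ is not a variable of the reduced Sawtooth model $\mathcal{S}_{\lambda}$, and one has to argue directly in the original polytope of fillings (equivalently, refrain from integrating cell $n$ out in the construction of $\mathcal{S}_{\lambda}$) in order to make sense of the conditioning. Once this is granted, the rest is essentially bookkeeping with the estimates of Lemma \ref{cumulDist}.
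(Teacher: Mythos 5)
Your proposal is correct and follows essentially the same route the paper sketches: it averages the inequality of Lemma \ref{cumulDist} over the law of cell $n$, using the Markov/polytope factorization to identify $F_{X_{I}\mid x_{n}=z}^{\mathcal{S}_{\lambda}}$ with $F_{X_{I}\mid X_{F}=z}^{\mathcal{S}_{\lambda(n)}}$. The paper gives only the one-line hint that the proof ``consists only in an averaging of the inequality of the previous Lemma,'' and your write-up is a faithful and correctly justified expansion of that hint.
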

The proof consists only in an averaging of the inequality of the previous Lemma.\\
We will close this paper by proving Theorem \ref{goal2} :
\begin{proof}
By iteration it is enough to prove the result in the case $r=2$. Let $1\leq i_{1}<i_{2}\leq N$. Denote by $\nu_{1}$ (resp. $\nu_{2}$, resp. $ \nu_{3}$) the composition containing all the cells $i$ of $\lambda$ such that $i\leq i_{1}$ (resp. $i_{1}\leq i\leq i_{2}$, resp $i_{2}\leq i$). \\
Let $a$ be the length of the last run of $\nu_{1}$ and $b$ the length of the first run of $\nu_{2}$. We suppose that these two runs are increasing (the proof is the same in other cases). From \eqref{eqnLoi}, conditioning on the position $x_{i_{1}-a}=z$ and $x_{i_{1}+b}=z'$ yields the density of $x_{i_{1}}$
$$\tilde{d}_{z,z'}(x)=\frac{(\int_{0}^{z\wedge x}(x-z)^{a}dz)(\int_{z\vee x}^{1}(z'-x)^{b}dz')}{\int_{0}^{1}(\int_{0}^{z\wedge x}(x-z)^{a}dz)(\int_{z\vee x}^{1}(z'-x)^{b}dz')dx}.$$
From the latter expression, as $\min(a,b)\rightarrow +\infty$, $d_{\pi}(\mu(x_{i_{1}}),\delta_{\frac{a}{a+b}})$ goes to zero, implying the indepedence. Thus we assume that $a$ and $b$ are bounded by some constant $R$ and that the same holds for the first run of $\nu_{3}$ and the last run of $\nu_{2}$.\\
From \eqref{eqnLoi}, the law of $x_{i_{1}}$ and $x_{i_{2}}$ is 
$$d_{x_{i_{1}},x_{i_{2}}}(x,y)=\frac{d_{X_{F},\nu_{1}}(x)d_{(X_{I},X_{F}),\nu_{2}}(x,y)d_{X_{I},\nu_{3}}(y)}{\mathbb{E}_{(X_{I},X_{F}),\nu_{2}}(d_{X_{F},\nu_{1}}(X_{I})d_{X_{I},\nu_{3}}(X_{F}))}.$$
From the boundedness on the extreme runs and Proposition \ref{domNNRPinit}, there exists $K$ such that $\mathbb{E}_{(X_{I},X_{F}),\nu_{2}}(d_{X_{F},\nu_{1}}(X_{I})d_{X_{I},\nu_{3}}(X_{F}))\geq K$, and $\Vert d_{X_{F},\nu_{1}}\Vert,\Vert d_{X_{I},\nu_{3}}\Vert\leq K$.
Thus as $\nu_{2}$ becomes larger,
$$\Vert d_{x_{i_{1}},x_{i_{2}}}-d_{x_{i_{1}}}d_{x_{i_{2}}}\Vert_{\infty}\rightarrow 0,$$
independently of the shape of $\mu_{2}$. 
Finally by Lemma \ref{NNRPdes}, there exists $n$ such that if $i_{2}-i_{1}\geq n$,
$$d_{\pi}(\mu(\frac{\sigma_{\lambda}(i_{1})}{n},\frac{\sigma_{\lambda}(i_{r})}{n}),\mu(\frac{\sigma(i_{1})}{n})\otimes\mu(\frac{\sigma(i_{r})}{N}))\leq \epsilon.$$
\end{proof}
\section*{Acknowledgements}
I am grateful to Philippe Biane and Jean-Yves Thibon who suggested a problem that lead to this work. I wish also to thank  Roland Speicher and his team who welcomed me in Saarbruecken while I was doing this work. I first heard about the probabilistic approach to the descent statistic during the weekly workshop of the combinatoric team of Jean-Yves Thibon (during a talk of Matthieu Josua-Vergès) and my understanding of the subject greatly benefited from the reading of the paper \cite{bender2004asymptotics} of Bender, Helton and Richmond. 
\bibliographystyle{alpha}
\nocite{*}
\bibliography{DescentsetI}
\end{document}